\newcommand{\R}{\mathbb{R}}
\newtheorem{theorem}{Theorem}[section]
\newtheorem{lemma}[theorem]{Lemma}
\newtheorem{corollary}{Corollary}[theorem]
\newtheorem{definition}{Definition}[section]
\title{Hessian determinants and averaging operators over surfaces in $\R^3$}
\author{Michael Greenblatt}
\date{\today}
\newcommand\blfootnote[1]{%
  \begingroup
  \renewcommand\thefootnote{}\footnote{#1}%
  \addtocounter{footnote}{-1}%
  \endgroup
}
\begin{document}
\maketitle
\begin{abstract} 

We prove $L^p({\mathbb R}^3)$ to $L^p_s(\R^3)$ Sobolev improvement theorems for local averaging operators over real analytic surfaces in $\R^3$. 
In a sense made precise in the paper, the set of $(p,s)$ for which we prove $L^p(\R^3)$ to $L^p_s(\R^3)$ boundedness is optimal up to endpoints for generic such operators. Using an interpolation argument in conjunction with  these $L^p(\R^3)$ to $L^p_s(\R^3)$ results we obtain an $L^p(\R^3)$ to $L^q(\R^3)$ improvement theorem, and the set of exponents $(p,q)$ obtained will also generically be optimal up to endpoints.
 The advantage the methods of this paper have over those of the author's earlier papers 
is that the oscillatory integral methods of the earlier papers, closely tied to the Van der Corput lemma, allow one to 
only prove 1/2 of a derivative of surface measure Fourier transform decay, while the methods of this paper, when combined with appropriate resolution of singularities methods, allow one to go up to the maximum possible 1 derivative. This allows us to prove the stronger sharp up to endpoints results. 

\end{abstract}
\blfootnote{This work was supported by a grant from the Simons Foundation.}

\section{ Introduction and theorem statements } 

We prove $L^p({\mathbb R}^3)$ to $L^p_s({\mathbb R}^3)$ Sobolev improvement theorems for local averaging operators over real analytic surfaces in ${\mathbb R}^3$. For generic operators, in a sense we will make precise, the set of $(p,s)$ for which we prove $L^p({\mathbb R}^3)$ to $L^p_s{(\mathbb R}^3)$ boundedness is optimal up to endpoints. Using an interpolation argument in conjunction with  these $L^p({\mathbb R}^3)$ to $L^p_s({\mathbb R}^3)$ results we obtain an $L^p({\mathbb R}^3)$ to $L^q({\mathbb R}^3)$ improvement theorem, and the set of exponents $(p,q)$ obtained will also generically be optimal up to endpoints.

\subsection {Definitions and key concepts}

We consider the operator, initially defined on Schwartz functions on $\R^3$,  defined by
\[ Tf(x_1,x_2, x_3) = \int_{\R^2} f(x_1 - t_1, x_2 - t_2, x_3 - S(t_1,t_2))\, \phi(t_1,t_2)\,dt_1dt_2 \tag{1.1}\]
Here $\phi(t_1,t_2)$ is a bump function supported near the origin and $S(t_1,t_2)$ is a real analytic function, not identically zero, defined on a 
neighborhood of the origin that contains the support of $\phi(t_1,t_2)$, which satisfies
\[S(0,0) = 0 {\hskip 1 in} \nabla S(0,0) = (0,0) \tag{1.2}\]
Since our theorems will be invariant under invertible linear transformations, we do not lose generality by assuming $(1.2)$. 

A key concept in our theorems
and proofs will be the following. Suppse $g(t_1,t_2)$ is a real analytic function defined on a neighborhood of the origin which satisfies 
$g(0,0) = 0$ and which is not identically zero. 
Then using resolution of singularities, (we refer to Chapters 6-7 of [AGV] for more details) one can show that there are $0 < \eta \leq 1$,
an integer $k = 0$ or $1$ and a neighborhood $U$ of the origin in $\R^2$, such that if $V \subset U$ is a neighborhood of the origin then for
some positive constants $b_V, c_V$ one has the following for all $0 < \delta < {1 \over 2}$.
\[b_V\delta^{\eta} |\ln \delta|^k < m(\{(t_1,t_2) \in V:  |g(t_1,t_2))| < \delta\}) < c_V \delta^{\eta} |\ln\delta|^k \tag{1.3}\]
Here $m$ denotes Lebesgue measure. 
The exponent $\eta$ in $(1.3)$ can be described succinctly as the supremum of the numbers $\epsilon > 0$ for which $|g|^{-\epsilon}$ is locally
integrable in any sufficiently small neighborhood of the origin. 

When $(1.2)$ holds, the maximum possible value of $\eta$ is $1$, which occurs in the nondegenerate case, i.e. when the Hessian determinant of 
$S$ is nonzero at the origin. In this situation, the optimal $L^p$ to $L^q_s$ results follow immediately from interpolating with 
 the well-established
$L^2$ to $L^2_1$ and $L^p$ to $L^q$ boundedness properties for nondegenerate surfaces. As a result, in this paper we will always assume that
the Hessian determinant of $S(t_1,t_2)$, which we denote by $H(t_1,t_2)$, satisfies
\[ H(0,0) = 0 \tag{1.4}\]

We will also assume that $H(t_1,t_2)$ is not identically zero. The case where $H(t_1,t_2)$ is identically zero effectively means that $S(t_1,t_2)$ behaves like a  
function of one variable. In fact, it can be shown that if $S(t_1,t_2)$ is a polynomial, if $H(t_1,t_2)$ is identically zero then after a linear transformation
$S(t_1,t_2)$ does indeed becomes a function of one variable, in which case the Sobolev improvement theorems known for curves in $\R^2$  (see
[C1] and [Gra]) imply optimal results for the surfaces. 

\subsection {$L^p$ to $L^p_s$ theorems}

We now come to the main theorem of this paper; the other results will follow from combining this result with other results, using appropriate interpolation
arguments. 

\begin{theorem} Suppose $(1.2)$ and $(1.4)$ hold and neither $S(t_1,t_2)$ nor $H(t_1,t_2)$ are identically zero. Let $\eta$ be the exponent in $(1.3)$ corresponding to $S(t_1,t_2)$, and let $\eta'$ be the  exponent in $(1.3)$ corresponding to $H(t_1,t_2)$. There is a neighborhood $U$ of the origin
such that if $\phi(t_1,t_2)$ is supported in $U$, then
$T$ is bounded from $L^p(\R^3)$ to $L^p_s(\R^3)$ for $({1 \over p}, s)$ in the interior of the trapezoid in the $xy$ plane bounded by the lines $y = 0,
 y = \min(\eta,{2\eta' \over 1 + 2\eta'}),\, y = 2x,$ and $y = 2 - 2x$. 
If $\eta \leq {2\eta' \over 1 + 2\eta'} $, then this result is sharp up to endpoints;
 if $\phi$ is nonnegative with $\phi(0,0) > 0$ and $({1 \over p}, s)$ is not in the closed trapezoid, then $T$ is not bounded from $L^p(\R^3)$ to $L^p_s(\R^3)$.
 \end{theorem}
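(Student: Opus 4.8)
The plan is to realize $T$ as the Fourier multiplier operator with symbol $m(\xi)=\int_{\R^2}e^{-i(\xi_1 t_1+\xi_2 t_2+\xi_3 S(t_1,t_2))}\phi(t_1,t_2)\,dt_1\,dt_2$, i.e. the Fourier transform of the (weighted) surface measure, and to decompose $m=\sum_j m_j$ by Littlewood--Paley pieces supported on $|\xi|\sim 2^j$. For each $(1/p,s)$ strictly inside the trapezoid I would prove $\|T_j f\|_{L^p}\lesssim 2^{-js}2^{-j\varepsilon}\|f\|_{L^p}$ for some $\varepsilon>0$, which then sums. Since $T^*$ is again an operator of the form $(1.1)$ attached to a real analytic surface satisfying $(1.2)$ and $(1.4)$, and since $L^p\to L^p_s$ bounds are preserved under adjoints and under invertible linear changes of variable, duality together with interpolation among $L^p$ spaces reduces matters to obtaining such $T_j$-bounds near the two upper vertices $(c/2,c)$ and $(1-c/2,c)$ of the trapezoid, where $c=\min(\eta,\tfrac{2\eta'}{1+2\eta'})$, together with the trivial bounds $\|T_j\|_{L^1\to L^1},\|T_j\|_{L^\infty\to L^\infty}\lesssim 1$ at $x=1$ and $x=0$.

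The analytic core is a uniform oscillatory estimate for $m$. Writing $\xi=\lambda\omega$ with $\lambda=|\xi|$, the phase $\Phi_\omega(t)=\omega_1 t_1+\omega_2 t_2+\omega_3 S(t)$ has critical points where $\nabla S(t)=-\omega_3^{-1}(\omega_1,\omega_2)$, and there the Hessian of $\Phi_\omega$ has determinant $\omega_3^2 H(t)$. Away from the flat directions and away from the zero set of $H$, two-dimensional stationary phase gives the full rate $|m(\xi)|\lesssim|\xi|^{-1}$; the two sources of loss are handled separately. First, splitting the $t$-support according to $|H(t)|\sim 2^{-\ell}$, the degenerate stationary phase bound on the $\ell$-th piece is of size $\lambda^{-1}2^{\ell/2}$, while the trivial bound is the measure of $\{|H|<2^{-\ell}\}$, which by $(1.3)$ applied to $H$ is $\lesssim 2^{-\ell\eta'}$ up to a logarithm; balancing these at $2^{-\ell_*}\sim\lambda^{-2/(1+2\eta')}$ produces the exponent $\tfrac{2\eta'}{1+2\eta'}$. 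Second, in the remaining flat directions (essentially $\omega$ near $(0,0,\pm1)$ with the critical point approaching $\{\nabla S=0\}$), the sublevel estimate $(1.3)$ for $S$, via the two-dimensional comparison between sublevel measures and oscillatory decay, gives $|m(\xi)|\lesssim|\xi|^{-\eta}$; the minimum of the two exponents is $c$. To make the stationary phase steps uniform across all degeneracies I would first apply resolution of singularities (Chapters 6--7 of [AGV]) to monomialize $S$ and $H$ simultaneously, reducing to finitely many explicit model charts on which $\nabla S$ and $H$ are understood, and then run van der Corput/stationary phase chart by chart. The pointwise bound $|m(\xi)|\lesssim|\xi|^{-c}$ yields the $L^2\to L^2_s$ estimate for every $s<c$ at once. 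To fill the \emph{open} trapezoid near the vertex $(c/2,c)$ one cannot merely interpolate this $L^2$ bound against the trivial $L^\infty$ bound, which (as $c<1$) only reaches $s<c^2$; instead I would keep the $\ell$-decomposition, noting $\|T_{j,\ell}\|_{L^2\to L^2}\lesssim\min(\lambda^{-1}2^{\ell/2},2^{-\ell\eta'})$ and $\|T_{j,\ell}\|_{L^\infty\to L^\infty}\lesssim 2^{-\ell\eta'}$ up to logarithms (the latter since the convolution kernel of $T_{j,\ell}$ is built from a surface patch of mass $\sim 2^{-\ell\eta'}$), interpolate to $L^p$, and sum over $\ell$. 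The hypothesis $\eta\le\tfrac{2\eta'}{1+2\eta'}$ (equivalently $2/c\le(1+2\eta')/\eta'$, i.e. $p=2/c$ stays below the critical exponent) is precisely what forces the resulting geometric series in $\ell$ to sum to $\lesssim\lambda^{-c}$ up to a logarithm, which is summable in $j$ against $2^{js}$ for $s<c$.

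For the sharpness statement, where $\phi=\psi\ge0$ with $\psi(0,0)>0$ and $c=\eta$, I would use two families of test functions, each exploiting $\psi\ge0$ to forbid cancellation. The first is built around the flat direction: from the lower bound in $(1.3)$ for $S$ one extracts $\lambda_n\to\infty$ and boxes $Q_n\subset\R^3$ adapted to the resolution data of $S$ near $(0,0,\lambda_n)$ on which $|m|\gtrsim\lambda_n^{-\eta}$ up to a logarithm; testing $T$ on $f_n$ with $\widehat{f_n}=\mathbf 1_{Q_n}$ forces $s\le\eta=c$ (up to the logarithmic endpoint). The second is the local example $f=\mathbf 1_{B(0,\delta)}$: since $\psi(0,0)>0$ one has $Tf\gtrsim\delta^2$ on a $\delta$-neighbourhood of the surface patch, and letting $\delta\to0$ in $\|Tf\|_{L^p_s}/\|f\|_{L^p}$ forces $s\le 2-2/p$; the same example applied to $T^*$ forces $s\le 2/p$. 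Points lying just outside the closed trapezoid but not excluded by either family on its own (those near the two upper corners) are then ruled out by combining or modulating the two constructions.

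I expect the main obstacle to be the resolution-of-singularities bookkeeping in the middle paragraph: carrying out a single monomialization that simultaneously controls $S$, $\nabla S$, and $H=\det\mathrm{Hess}\,S$, matching the two independent sets of resolution data (one attached to $S$, one to $H$) chart by chart, and then running a stationary phase argument uniform in the direction $\omega$ that reproduces exactly the exponent $\min(\eta,\tfrac{2\eta'}{1+2\eta'})$ rather than something weaker. A secondary difficulty, already visible above, is organizing the interpolation and the summation of the doubly-indexed family $\{T_{j,\ell}\}$ so as to recover the full open trapezoid, which appears to need both the sharp per-piece oscillatory bounds and the correct support-size $L^\infty$ bounds.
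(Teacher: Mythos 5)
Your overall architecture coincides with the paper's: Littlewood--Paley decomposition in frequency, a resolution-of-singularities decomposition of the $t$-support into charts where the relevant quantities are monomialized, a stationary-phase bound of size $\lambda^{-1}|H|^{-1/2}$ on the good pieces balanced against the sublevel-set bound for $H$ (producing $\frac{2\eta'}{1+2\eta'}$), per-piece interpolation of an $L^2$-Sobolev bound with a trivial bound proportional to the piece's mass, and summation of the pieces using local integrability of $|H|^{-\eta'}$. That is exactly how Section 4 of the paper obtains the $\eta'$-governed part of the trapezoid (the $V^1_{ijkn}$ analysis plus (4.24)--(4.29)), so that portion of your plan is sound, modulo the known technical burden you flagged. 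Two caveats even there: monomializing $S$ and $H$ alone is not enough for a chart-by-chart iterated one-dimensional stationary phase; you also need $\partial_{yy}S$ and $\partial_{xy}S$ comparable to monomials on each chart (the paper resolves $\partial_y^2S$ and $\partial_y(S(x,y)-S(0,y))$ precisely for this), and charts whose $y$-width is smaller than the stationary-phase width $(\lambda|\partial_{yy}S|)^{-1/2}$ cannot support the $\lambda^{-1}2^{\ell/2}$ bound at all and must be treated by a separate argument (the paper's ``case 2'' operators $W^l_{ijkn}$).

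The genuine gap is in how you produce the exponent $\eta$. You assert that in the flat directions the sublevel estimate $(1.3)$ for $S$ gives $|m(\xi)|\lesssim|\xi|^{-\eta}$ ``via the two-dimensional comparison between sublevel measures and oscillatory decay.'' No such general comparison exists in two variables: passing from sublevel bounds to oscillatory decay that is uniform in the linear terms $\xi_1t_1+\xi_2t_2$ is the deep Varchenko--Karpushkin/[IkKM] circle of results, which the paper deliberately avoids invoking and which your sketch neither proves nor cites. More importantly, even granting that pointwise bound, it cannot reach the vertex $(\frac{\eta}{2},\eta)$: by your own observation, interpolating a pointwise $|\xi|^{-\eta}$ bound (an $L^2$ statement) against trivial bounds of size $O(1)$ only yields $s<\frac{2\eta}{p}$, which near $\frac1p=\frac{\eta}{2}$ gives $s<\eta^2$. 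Your repair via the $\ell$-decomposition works only for the $H$-governed contribution, because there each piece carries the small trivial bound $2^{-\ell\eta'}$; you supply no analogous decomposition, with small per-piece trivial bounds, for the contribution governed by $S$. This is exactly where the paper works hardest: it pairs per-rectangle $L^2$ bounds of the form $(4.32)$--$(4.35)$, $(4.44)$ with the rectangle's measure as the trivial bound, interpolates with weights $\eta_1$ and $1-\eta_1$, and sums using local integrability of $|S|^{-\eta}$; and in the most degenerate subcase (monomialized $S_i\sim x^{\alpha_i}$ on thin rectangles) the per-piece bounds themselves require the structural dichotomy of Theorems 3.4 and 3.5 --- either a lower bound on $\partial_{xy}S_i$ together with a comparably sized sublevel box, or the weighted integral estimate $(3.3)$ resting on the Newton-polygon inequality $\frac23\beta_i'\eta\le1$ --- combined with second/third-derivative van der Corput in $x$. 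None of this machinery, or a substitute for it, appears in your proposal, so as written the argument does not fill the trapezoid up to the line $s=\eta$ when $\eta\le\frac{2\eta'}{1+2\eta'}$. (Your sharpness sketch is essentially fine; note that the three test families already cover every point outside the closed trapezoid, so no combined construction near the corners is needed, and the paper handles the $s>\eta$ obstruction by the $\psi(x)|x_3|^a$ test from [G1].)
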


We address the sharpness statement of Theorem 1.1. One never has $L^p$ to $L^p_s$ boundedness for $T$ when $({1 \over p}, s)$ is strictly above the line $y = 2 - 2x$; this can be verified by testing $T$ on
bump functions supported on balls of radius $r$ for $r \rightarrow 0$. By duality, one therefore never has $L^p$ to $L^p_s$ boundedness for $T$ when
$({1 \over p}, s)$ is strictly above the line $y = 2x$. If $1 < p < \infty$, one also never has $L^p$ to $L^p_s$ boundedness for $s > \eta$; this can 
be verified by testing
on functions of the form $\psi(x_1,x_2,x_3)|x_3|^a$ for $0 < a < 1$ and $\psi$ a nonnegative bump function satisfying $\psi(0,0,0) > 0$. The details of this argument are worked out at the end of [G1].
 
Thus if $\eta \leq {2\eta' \over 1 + 2\eta'} $, Theorem 1.1 provides optimal $L^p$ Sobolev improvement up to endpoints. As we will see in section 2.1, there is a sense
in which generically one actually has $\eta = {2\eta' \over 1 + 2\eta'} $. Thus Theorem 1.1 gives boundedness that is sharp up to endpoints in
such generic scenarios. If ${2\eta' \over 1 + 2\eta'}  < \eta$, one can interpolate Theorem 1.1 with results in the literature to obtain stronger results. If $\eta \leq 
{1 \over 2}$, one can interpolate with Theorem 1.2 of [G2], and if $\eta > {1 \over 2}$ one can interpolate with the $L^2$ to $L^2_{\eta}$ boundedness
that is known to hold for all surfaces; this boundedness is equivalent to the uniform decay rates for the Fourier transforms of surface measures and was
proven by Duistermaat in [Du]. The result of these interpolations is the following.

\begin{theorem} Suppose we are in the setting of Theorem 1.1 and ${2\eta' \over 1 + 2\eta'}  < \eta$. There is a neighborhood $U$ of the origin
such that if $\phi(t_1,t_2)$ is supported in $U$, then
\begin{enumerate}
\item If $\eta \leq {1 \over 2}$, then $T$ is bounded from $L^p(\R^3)$ to $L^p_s(\R^3)$ for $({1 \over p}, s)$ in the interior of the polygon with vertices 
$(0,0), ({\eta' \over 1 + 2\eta'}, {2\eta' \over 1 + 2\eta'}), (\eta,\eta), (1-\eta, 1 - \eta), (1 - {\eta' \over 1 + 2\eta'}, {2\eta' \over 1 + 2\eta'}),$ and
$(1,0)$.
\item If $\eta > {1 \over 2}$, then $T$ is bounded from $L^p(\R^3)$ to $L^p_s(\R^3)$ for $({1 \over p}, s)$ in the interior of the polygon with vertices 
$(0,0), ({\eta' \over 1 + 2\eta'}, {2\eta' \over 1 + 2\eta'}), ({1 \over 2}, \eta), (1 - {\eta' \over 1 + 2\eta'}, {2\eta' \over 1 + 2\eta'})$, and
$(1,0)$.
\end{enumerate}
\end{theorem}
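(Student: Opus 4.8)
The plan is to obtain Theorem~1.2 purely by interpolating Theorem~1.1 against estimates that are already available, so the argument will be soft; the real content is in locating the interpolation hull. First I would record that, since $\frac{2\eta'}{1+2\eta'}<\eta$, Theorem~1.1 gives boundedness of $T$ from $L^p(\R^3)$ to $L^p_s(\R^3)$ for $(\tfrac1p,s)$ in the interior of the trapezoid $\mathcal T$ with vertices $(0,0)$, $\big(\tfrac{\eta'}{1+2\eta'},\tfrac{2\eta'}{1+2\eta'}\big)$, $\big(1-\tfrac{\eta'}{1+2\eta'},\tfrac{2\eta'}{1+2\eta'}\big)$, $(1,0)$, whose top edge sits at the height $\tfrac{2\eta'}{1+2\eta'}$, strictly below the sharp height $\eta$. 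Then I would adjoin a single further estimate reaching height $\eta$. When $\eta>\tfrac12$ the extra input is Duistermaat's $L^2(\R^3)\to L^2_\eta(\R^3)$ bound from [Du], i.e.\ boundedness at the point $\big(\tfrac12,\eta\big)$. When $\eta\le\tfrac12$ it is Theorem~1.2 of [G2], which in the present setting provides $L^p$-to-$L^p_s$ boundedness reaching up to height $\eta$, with $(\eta,\eta)$ as the relevant extreme point; since the adjoint $T^{*}$ is again an operator of the form $(1.1)$ (with $S(t)$ replaced by $-S(-t)$, which has the same exponents $\eta$ and $\eta'$), the duality $T\colon L^p\to L^p_s\iff T^{*}\colon L^{p'}\to L^{p'}_s$ yields boundedness at the mirror point of $(\eta,\eta)$ under $\tfrac1p\mapsto 1-\tfrac1p$ as well.

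Next I would run the interpolation. As $T$ is a convolution operator it commutes with $(I-\Delta)^{z/2}$, so from two bounds $T\colon L^{p_0}\to L^{p_0}_{s_0}$ and $T\colon L^{p_1}\to L^{p_1}_{s_1}$ with $1<p_0,p_1<\infty$ one obtains, by Stein's analytic interpolation theorem applied to the family $T_z=(I-\Delta)^{((1-z)s_0+zs_1)/2}\circ T$, the bound $T\colon L^{p_\theta}\to L^{p_\theta}_{s_\theta}$ along the whole segment joining the two data points: on $\Re\,z=j$ one factors $T_{j+it}=(I-\Delta)^{it(s_1-s_0)/2}\circ\big((I-\Delta)^{s_j/2}\circ T\big)$, where the second factor is bounded on $L^{p_j}$ by hypothesis and the imaginary power $(I-\Delta)^{i\tau}$ is bounded on $L^{p_j}$ with operator norm $O\big((1+|\tau|)^{N}\big)$. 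Iterating over all pairs of data points, $T$ is bounded on the interior of the convex hull of $\overline{\mathcal T}$ together with the extra point(s). The restriction $1<p<\infty$ is harmless here: every data point other than $(0,0)$ and $(1,0)$ already lies in $\{0<\tfrac1p<1\}$, and near $(0,0)$ and $(1,0)$ the trapezoid of Theorem~1.1 by itself already exhausts the claimed region.

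The last step is to check that this convex hull is the polygon in the statement, which is an elementary planar computation. For $\eta>\tfrac12$ one computes that the edge from $\big(\tfrac{\eta'}{1+2\eta'},\tfrac{2\eta'}{1+2\eta'}\big)$ to $\big(\tfrac12,\eta\big)$ has slope $2\eta+4\eta'(\eta-1)$, which lies strictly between $0$ and $2$ precisely because $\tfrac{2\eta'}{1+2\eta'}<\eta<1$ (the lower bound is literally the hypothesis $\tfrac{2\eta'}{1+2\eta'}<\eta$, the upper bound is $\eta<1$, which holds by $(1.4)$); hence the successive edge slopes $2,\ 2\eta+4\eta'(\eta-1),\ -(2\eta+4\eta'(\eta-1)),\ -2$ are strictly decreasing and the hull is exactly the stated pentagon. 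For $\eta\le\tfrac12$ the analogous computation, with $\big(\tfrac12,\eta\big)$ replaced by $(\eta,\eta)$ and its reflection, gives an edge of slope $\big(\eta-\tfrac{2\eta'}{1+2\eta'}\big)\big/\big(\eta-\tfrac{\eta'}{1+2\eta'}\big)\in(0,1)$ followed by a horizontal edge at height $\eta$, using $\eta\le\tfrac12$ to guarantee $\eta\le1-\eta$ and that $(\eta,\eta)$ lies strictly above $\mathcal T$; this yields the stated polygon. One also checks along the way that the polygon respects the necessary conditions $s\le\eta$, $s\le\tfrac2p$, $s\le2-\tfrac2p$ recorded after Theorem~1.1, so there is no inconsistency.

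I do not expect a genuine analytic obstacle: all the hard harmonic analysis sits in Theorem~1.1 and the two supplementary estimates are quoted from the literature. The only points requiring care are the convex-hull bookkeeping above and, conceptually, why the split at $\eta=\tfrac12$ is forced — namely that once $\eta>\tfrac12$ the point $(\eta,\eta)$ falls outside the range permitted by the necessary conditions (already violating $s\le2-\tfrac2p$ when $\eta>\tfrac23$), so Theorem~1.2 of [G2] cannot be pushed to it and one must instead use the weaker-looking but still valid point $\big(\tfrac12,\eta\big)$ coming from Duistermaat's uniform Fourier decay bound.
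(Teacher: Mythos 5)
Your proposal follows exactly the route the paper itself takes: the paper's entire proof of Theorem 1.2 is the remark preceding it, namely interpolate the trapezoid of Theorem 1.1 with Theorem 1.2 of [G2] when $\eta\le\frac12$ and with Duistermaat's $L^2\to L^2_\eta$ bound when $\eta>\frac12$, and your analytic-interpolation mechanics (commuting $T$ with $(I-\Delta)^{z/2}$, imaginary powers of $(I-\Delta)$ of admissible growth) and convex-hull bookkeeping are the correct way to make that one-line argument precise. One caveat: in the case $\eta\le\frac12$ the hull you actually compute has a horizontal top edge at height $\eta$, i.e.\ vertices $(\eta,\eta)$ and $(1-\eta,\eta)$, which is not literally the polygon printed in the statement, whose fourth vertex is $(1-\eta,1-\eta)$ and which therefore reaches heights $s$ up to $1-\eta>\eta$; since the paper's own sharpness discussion after Theorem 1.1 shows $L^p\to L^p_s$ boundedness never holds for $s>\eta$ when $1<p<\infty$, the printed vertex is evidently a misprint for $(1-\eta,\eta)$ and your region is the correct (and intended) one, but you should say so explicitly rather than assert that your hull ``is the stated polygon.'' A second, smaller point is that your justification $\eta<1$ ``by $(1.4)$'' deserves a sentence (order of vanishing $\ge 2$ plus degenerate Hessian at the origin with $H\not\equiv 0$ forces $\eta<1$), though nothing in the region would change even in the borderline case.
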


There has been a lot of work done on boundedness properties of averaging operators on function spaces. For curves in $\R^2$, [Se] proves 
$L^p_{\alpha}$ to $L^q_{\beta}$ boundedness theorems that are complete up to endpoints. For translation invariant averaging operators, $L^2$ to $L^2_{\beta}$ Sobolev space improvement is equivalent to a surface measure Fourier transform 
decay rate estimate. For surfaces in three dimensions, the stability theorems of Karpushkin [Ka1] [Ka2], when combined with [V], give such sharp decay rate
 results for real analytic surfaces. Extensions to finite type smooth surfaces appear in [IkKM]. There are also some results for convex (non necessarily smooth) surfaces such as [R1][R2]. For general $p$, the paper
 [St]  considers Sobolev estimates for Radon transforms in a quite general setting, focusing attention on singular density functions. We also mention the
 papers [HeHoY1] [HeHoY2] in this area. The author has also 
 written several previous papers with results on this subject, including [G1][G2][G3][G6].

\subsection{$L^p$ to $L^q$ theorems}

It follows from Theorem 1.2 of [G3] that for any $1 < p \leq q < \infty$ and any $r > -1$, the operator $T$ is bounded from $L^p$ to $L^q_r$. 
For a sequence of $({1 \over p_n}, {1 \over q_n}, r_n)$ approaching $(1,0,-1)$, we interpolate this fact with Theorems 1.1 and 1.2, obtaining an
$L^p$ to $L^q$ boundedness region for $1 <p < q < \infty$. Theorems 1.1 and 1.2 translate as follows.

\begin{theorem} Suppose we are in the setting of Theorem 1.1 and ${2\eta' \over 1 + 2\eta'} \geq \eta$.
 There is a neighborhood $U$ of the origin
such that if $\phi(t_1,t_2)$ is supported in $U$, then $T$ is bounded from $L^p(\R^3)$ to $L^q(\R^3)$ for $({1 \over p}, {1 \over q})$ in the interior of the trapezoid 
in the $xy$ plane bounded by the lines $y = x$, $y = x - {\eta\over \eta + 1}$, $y = {x \over 3}$, and $y = 3x - 2$.
\end{theorem}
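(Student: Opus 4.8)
The plan is to combine Theorem 1.1 with the $L^p\to L^q_r$ boundedness furnished by Theorem 1.2 of [G3] via complex interpolation, and then to pass from $L^q_r$ to $L^q$ by Sobolev embedding. Because $\tfrac{2\eta'}{1+2\eta'}\ge\eta$, Theorem 1.1 gives $T:L^p(\R^3)\to L^p_s(\R^3)$ for $(\tfrac1p,s)$ in the open trapezoid $\mathcal T$ with vertices $(0,0),\,(\tfrac\eta2,\eta),\,(1-\tfrac\eta2,\eta),\,(1,0)$, whose four edges lie on the lines $y=2x$, $y=\eta$, $y=2-2x$, $y=0$. Regarding each such bound as an estimate $T:L^p\to L^q_r$ with $\tfrac1q=\tfrac1p$ and $r=s$, and interpolating it against the bounds $T:L^{p_n}\to L^{q_n}_{r_n}$ along a sequence with $(\tfrac1{p_n},\tfrac1{q_n},r_n)\to(1,0,-1)$ coming from [G3] — this is legitimate since the Bessel potential spaces $L^q_r(\R^3)$ form a complex interpolation scale, so that the set of $(\tfrac1p,\tfrac1q,r)$ for which the fixed operator $T$ is bounded from $L^p$ to $L^q_r$ is convex — I obtain such boundedness throughout the relative interior $\mathcal R$ of the convex hull of $\{(x,x,s):(x,s)\in\overline{\mathcal T}\}\cup\{(1,0,-1)\}$.

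Next I would Sobolev-embed on the portion of $\mathcal R$ with $r>0$, where $0<r\le\eta\le1$ by $(1.2)$ and (as the computation below shows) $\tfrac1q-\tfrac r3>0$, so that $L^q_r(\R^3)\hookrightarrow L^{\tilde q}(\R^3)$ with $\tfrac1{\tilde q}=\tfrac1q-\tfrac r3$, giving $T:L^p\to L^{\tilde q}$. To identify the resulting exponents, write a point of $\mathcal R$ as the interpolant, at parameter $\theta\in[0,1]$, between a point $(x,x,s)$ with $(x,s)\in\overline{\mathcal T}$ and $(1,0,-1)$, so that $\tfrac1p=(1-\theta)x+\theta$, $\tfrac1q=(1-\theta)x$, $r=(1-\theta)s-\theta$, the condition $r\ge0$ being precisely $\theta\le\tfrac{s}{s+1}$. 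A short calculation then gives $\tfrac1{\tilde q}-\tfrac13\tfrac1p=\tfrac{1-\theta}3(2x-s)\ge0$ (using $s\le2x$ on $\mathcal T$); $\tfrac1p-\tfrac1{\tilde q}=\tfrac13\big(2\theta+(1-\theta)s\big)\le\tfrac{s}{s+1}\le\tfrac\eta{\eta+1}$ (maximizing the linear expression over $\theta\le\tfrac s{s+1}$, then over $s\le\eta$); and $\tfrac1{\tilde q}-\big(3\tfrac1p-2\big)\ge0$ (using $x\le1-\tfrac s2$ on $\mathcal T$); together with the trivial $\tfrac1{\tilde q}\le\tfrac1p$, this confines $(\tfrac1p,\tfrac1{\tilde q})$ to the closed trapezoid of the statement. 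Conversely, taking $\theta$ equal to the critical value $\tfrac s{s+1}$ one checks that the edge $y=2x$ of $\mathcal T$ maps onto the line $y=\tfrac x3$, the edge $y=\eta$ onto $y=x-\tfrac\eta{\eta+1}$, the edge $y=2-2x$ onto $y=3x-2$, and the edge $y=0$ onto $y=x$, the vertices going respectively to $(0,0)$, $(\tfrac{3\eta}{2(\eta+1)},\tfrac\eta{2(\eta+1)})$, $(\tfrac{\eta+2}{2(\eta+1)},\tfrac{2-\eta}{2(\eta+1)})$ and $(1,1)$; since the region of $L^p\to L^{\tilde q}$ boundedness is convex, it contains the entire interior of the trapezoid, where moreover $1<p$ and $q<\infty$.

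The mathematical content is already in Theorem 1.1 and in the corner estimate of [G3]; what remains is essentially convex-geometry bookkeeping — verifying that exactly those four lines bound the image region, that every interior point of the claimed trapezoid is actually produced, and that the Sobolev embedding hypotheses hold on the relevant part of $\mathcal R$. The main point requiring care, and the reason all regions must be taken open and all conclusions phrased on interiors, is that the corner $(1,0,-1)$ and the edges of $\mathcal T$ are only limits of admissible exponents rather than admissible exponents themselves; I would expect no genuine analytic obstacle beyond getting this bookkeeping right.
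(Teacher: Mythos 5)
Your proposal is correct and follows essentially the same route as the paper: Theorem 1.3 is obtained there by interpolating the Theorem 1.1 bounds (with $\eta_1=\eta$) against the $L^{p_n}\to L^{q_n}_{r_n}$ estimates of [G3, Theorem 1.2] along a sequence $({1\over p_n},{1\over q_n},r_n)\to(1,0,-1)$, and your critical-parameter computation $\theta={s\over s+1}$ reproduces exactly the transfer map $(x,y)\mapsto({x+y\over y+1},{x\over y+1})$ that the paper records in Theorem 1.4, with the same image trapezoid and vertices. The additional Sobolev-embedding step you use for the $r>0$ portion is harmless but unnecessary, since interpolating directly to $r=0$ already sweeps out the interior of the stated trapezoid.
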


\begin{theorem} Suppose we are in the setting of Theorem 1.1 and ${2\eta' \over 1 + 2\eta'} < \eta$. Let $R$ be the open region 
given by Theorem 1.2, and let $R'$ be the image of $R$ under the map $(x,y) \rightarrow ({x + y  \over y + 1}, {x \over y + 1})$. There is a
 neighborhood $U$ of the origin such that if $\phi(t_1,t_2)$ is supported in $U$ and $({1 \over p}, {1 \over q})$ is in $R'$ then $T$ is bounded from
  $L^p(\R^3)$ to $L^q(\R^3)$. 
\end{theorem}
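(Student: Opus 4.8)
The plan is to obtain Theorem 1.4 from Theorem 1.2 exactly by the interpolation procedure announced in the paragraph preceding Theorem 1.3: one interpolates the $L^p(\R^3)\to L^p_s(\R^3)$ estimates of Theorem 1.2 against the estimates of Theorem 1.2 of [G3], which for the same operator $T$ (with $\phi$ supported near the origin) give $T:L^a(\R^3)\to L^b_r(\R^3)$ for every $1<a\le b<\infty$ and every $r>-1$. The only tool needed is complex interpolation of Bessel potential spaces: if $T:L^{p_0}\to L^{q_0}_{s_0}$ and $T:L^{p_1}\to L^{q_1}_{s_1}$ are bounded, then, using the standard identity $[L^{q_0}_{s_0},L^{q_1}_{s_1}]_\theta=L^{q_\theta}_{s_\theta}$, the operator $T:L^{p_\theta}\to L^{q_\theta}_{s_\theta}$ is bounded for each $\theta\in(0,1)$, with $\frac1{p_\theta},\frac1{q_\theta},s_\theta$ the usual convex combinations. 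The point is to choose the second endpoint near the forbidden corner $(1,0,-1)$ and the parameter $\theta$ so as to kill the Sobolev index, i.e. $s_\theta=0$, leaving a pure $L^p\to L^q$ estimate.

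First I would carry out the computation. Fix $(x,y)=(\frac1p,s)$ in $R$ with $y>0$, so $T:L^{1/x}\to L^{1/x}_{y}$ is bounded by Theorem 1.2, and fix $(\frac1a,\frac1b,r)$ with $1<a\le b<\infty$ and $-1<r<0$. Taking $\theta=\frac{y}{y-r}\in(0,1)$ forces $s_\theta=0$, and interpolation yields $T:L^{p_\theta}\to L^{q_\theta}$ with
\[ \left(\frac1{p_\theta},\frac1{q_\theta}\right)=\left(\frac{-rx+y/a}{y-r},\ \frac{-rx+y/b}{y-r}\right). \]
As $(\frac1a,\frac1b,r)\to(1,0,-1)$ this point tends to $\big(\frac{x+y}{y+1},\frac{x}{y+1}\big)$, which is precisely the image of $(x,y)$ under the map $F$ defining $R'$. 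Since $F$ is a projective transformation, nonsingular on the half-plane $\{y>-1\}$ that contains $\overline R$ and mapping lines to lines, $R'$ is again an open polygon whose vertices are the images of the vertices of $R$; in particular $R'$ lies in $\{0\le\frac1q\le\frac1p\le1\}$, which is compatible with the exponent constraints entering the interpolation.

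The hard part — really the only part requiring care — is to check that every point of $R'$ is attained by a \emph{genuine} interpolation, with $r$ strictly greater than $-1$ and $a,b$ finite, rather than only in the limit. Given $(\frac1p,\frac1q)$ in the interior of $R'$, write it as $F(x_0,y_0)$ with $(x_0,y_0)$ in the open set $R$ and $y_0>0$. Along a sequence $(\frac1{a_n},\frac1{b_n},r_n)\to(1,0,-1)$, permitted by Theorem 1.2 of [G3], the maps $F_n:(x,y)\mapsto\big(\frac{-r_nx+y/a_n}{y-r_n},\frac{-r_nx+y/b_n}{y-r_n}\big)$ converge to $F$ in $C^1$ near $(x_0,y_0)$; since $F$ is a local diffeomorphism there, for $n$ large $F_n$ still has a point $(x_n,y_n)$ close to $(x_0,y_0)$, hence in $R$, with $F_n(x_n,y_n)=(\frac1p,\frac1q)$, and one verifies that the corresponding interpolated exponents remain a legitimate pair $1<p_\theta\le q_\theta<\infty$ as the second endpoint approaches $(1,0,-1)$. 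Interpolating $T:L^{1/x_n}\to L^{1/x_n}_{y_n}$ from Theorem 1.2 against $T:L^{a_n}\to L^{b_n}_{r_n}$ from Theorem 1.2 of [G3] then gives $T:L^p(\R^3)\to L^q(\R^3)$, which is Theorem 1.4. I expect no genuine obstacle beyond this bookkeeping near the corner $(1,0,-1)$: the substantive content is already in Theorem 1.2 and in Theorem 1.2 of [G3], and what remains is to track exponents and to exploit the openness of $R$ so as to land exactly on a prescribed point of $R'$ rather than merely near it.
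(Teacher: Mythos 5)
Your proposal is correct and follows essentially the same route as the paper: Theorem 1.4 is obtained exactly by interpolating the $L^p\to L^p_s$ bounds of Theorem 1.2 with the $L^a\to L^b_r$ bounds ($r>-1$) of Theorem 1.2 of [G3] along a sequence of exponents approaching $(1,0,-1)$, which is all the paper itself does (in the paragraph preceding Theorem 1.3). Your explicit computation of the limiting map $(x,y)\mapsto\bigl(\tfrac{x+y}{y+1},\tfrac{x}{y+1}\bigr)$ and the openness argument ensuring each interior point of $R'$ is hit by a genuine interpolation are the same bookkeeping the paper leaves implicit.
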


There have been quite a few $L^p$ to $L^q$ boundedness theorems for averaging operators over hypersurfaces.
Surfaces with nonvanishing Gaussian curvature are analyzed in [L][St2][Str]. 
The situation where $S(t_1,t_2)$ is a homogeneous or mixed homogeneous function has been considered in [DZ] [FGoU1] [FGoU2] [ISa] [Sch].
Convex surfaces of finite line type are dealt with in [ISaSe]. We will have more to say about such surfaces later in the paper. Also, there have been papers considering averaging operators with a damping function, often related to the Hessian determinant. We mention [Gr] and [O] as examples.

For the setting of Theorem 1.3, one has sharpness as follows. By testing on functions $f(x)$ that vanish near the
origin and are equal to $|x|^{-\alpha}$ for sufficiently large $|x|$, one can see that $T$ cannot be bounded from $L^p$ to $L^q$ for $q < p$. By testing
on characteristic functions of balls with radii tending to zero, one can show that $T$ cannot be bounded from $L^p$ to $L^q$  for $({1 \over p}, 
{1 \over q})$ below the line $y = 3x - 2$; by duality the same is true for the line $y = {x \over 3}$. 

There remains the line $y = x - 
{\eta \over \eta + 1}$. One generically does not have $L^p$ to $L^q$ boundedness for  $({1 \over p}, {1 \over q})$ below this line. To understand
in what sense this is true, we need to introduce some notions involving Newton polygons and related matters. To this end, let 
$g(x,y)$ denote a power series in $x^{1 \over N}$ and $y$ for some positive integer $N$, and write $g(x,y) = \sum_{a,b} g_{a,b}
x^a y^b$.

\begin{definition}
 For any $(a,b)$ for which $g_{a,b} \neq 0$, let $Q_{ab}$ be the
quadrant $\{(x,y) \in \R^2: 
x \geq a, y \geq b \}$. Then the {\it Newton polygon} $N(g)$ of $g(x,y)$ is defined to be 
the convex hull of the union of all $Q_{ab}$.  
\end{definition}

The boundary of $N(g)$ consists of finitely many (possibly none) bounded edges of negative slope
as well as an unbounded vertical ray and an unbounded horizontal ray. We next have the following definitions.

\begin{definition} For a bounded edge $e$ of $N(g)$, $g_e(x,y)$ denotes the polynomial $\sum_{(a,b) \in e} g_{a,b}x^a y^b$.
\end{definition}

\begin{definition} For a bounded edge $e$ of $N(g)$, $o(e)$ denotes the maximum order of any zero of the polynomial $g_e(1,y)$ other than $y = 0$. 
In the case there are no zeroes, we say $o(e) = 0$.
\end{definition}

\begin{definition} The {\it Newton distance} of $g$, denoted by $d(g)$, is the infimum of all $x$ for which $(x,x) \in N(g)$. 
\end{definition}

The exponent $\eta$ of $(1.3)$ is often given by ${1 \over d(g)}$.  This holds whenever the line $y = x$ intersects $N(g)$ at a vertex, on the horizontal or vertical 
rays, or in the interior of a compact edge $e$ for which $o(e) \leq d(g)$ We refer to [G5] for proofs. It is not hard to show that one can only have
 $o(e) > d(g)$ if the slope of $e$
is either an integer or the reciprocal of an integer, and even in those cases one generically has $o(e) \leq d(g)$ in a sense described below after the statement of Theorem 1.5. Thus
in this way,  $\eta = {1 \over d(g)}$ is the generic situation. 

Whenever $\eta = {1 \over d(g)}$, one can show that one cannot go below the line $y = x - {\eta \over \eta + 1}$ in Theorem 1.3 by 
testing on rectangular boxes. Thus was explicitly worked out in the $\eta \leq {1 \over 2}$ case in Theorem 1.5 of [G3], where an analogous result was also shown
in the Sobolev space setting. Hence we have

\begin{theorem} Suppose that the line $y = x$ either intersects $N(S)$ at a vertex, on the horizontal or vertical ray, or in the interior of a 
compact edge $e$ with $o(e) \leq d(S)$. Then if 
$\phi$ is nonnegative and $\phi(0,0) > 0$, $T$ is not bounded 
from $L^p(\R^3)$ to $L^q(\R^3)$ for ${1 \over q} < {1 \over p} - {\eta \over \eta + 1}$. Consequently, if we also have $\eta \leq {2\eta' \over 1 + 2\eta'} $, then
 Theorem 1.3 is sharp up to endpoints; $T$ is not bounded from $L^p(\R^3)$ to $L^q(\R^3)$ outside the closed trapezoid of the theorem.
\end{theorem}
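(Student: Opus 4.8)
The plan is to prove the negative (first) assertion by testing $T$ on indicator functions of small boxes adapted to the Newton polygon $N(S)$, and then to deduce the sharpness statement by assembling it with the three elementary obstructions already recorded just before Definition 1.1. The hypothesis enters only through [G5], which under it gives $\eta = 1/d(S)$; write $d = d(S)$, and note $d \ge 1$ since $S$ vanishes to order at least $2$ at the origin. To build the boxes, let $(d,d)$ be the point where $y = x$ meets $\partial N(S)$, and let $\beta = (\beta_1,\beta_2)$ with $\beta_1,\beta_2 \ge 0$ be a normal to a supporting line of $N(S)$ through $(d,d)$, oriented so that $N(S) \subseteq \{(a,b) : \beta_1 a + \beta_2 b \ge \beta_1 d + \beta_2 d\}$; such a $\beta$ exists in all cases, with both coordinates positive unless $(d,d)$ lies on the horizontal or vertical ray of $\partial N(S)$. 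Put $c = \beta_1 d + \beta_2 d$, which is then the minimum of $\beta_1 a + \beta_2 b$ over $(a,b) \in N(S)$, hence over all $(a,b)$ indexing a nonzero Taylor coefficient of $S$. The one step that will require care is the resulting pointwise estimate: on the box $B_\lambda = \{|t_1| < \min(\lambda^{\beta_1}, c_0),\ |t_2| < \min(\lambda^{\beta_2}, c_0)\}$ one has $|S(t_1,t_2)| \le C\lambda^c$ for all small $\lambda > 0$, where $c_0 > 0$ is fixed small enough that $\phi \ge \phi_0 > 0$ on $\{|t_1|, |t_2| < c_0\}$ (possible since $\phi(0,0) > 0$), so that $B_\lambda \subseteq \operatorname{supp}\phi$. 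This is immediate when $S$ is a polynomial but for general real analytic $S$ needs the tail of the Taylor series controlled; I expect to handle it by splitting the series at total degree comparable to $c$ divided by the smallest positive $\beta_j$ and combining absolute convergence on a fixed polydisc with the inequality $\beta_1 a + \beta_2 b \ge c$ valid on $N(S)$ (in the ray case, first bounding the analytic sum in the non-rescaled variable on $|t_j| < c_0$).

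With the pointwise bound in hand, I would take $f = \chi_E$, the indicator of $E = \{|x_1| < \sigma_1,\ |x_2| < \sigma_2,\ |x_3| < \sigma_3\}$ with $\sigma_j = \min(\lambda^{\beta_j}, c_0)$ for $j = 1,2$ and $\sigma_3 = 2C\lambda^c$, so $\|f\|_p \sim (\sigma_1\sigma_2\sigma_3)^{1/p}$. If $|x_1| < \sigma_1/10$, $|x_2| < \sigma_2/10$, $|x_3| < \sigma_3/10$, then for every $(t_1,t_2)$ with $|t_1| < \sigma_1/2$, $|t_2| < \sigma_2/2$ one has $(x_1-t_1, x_2-t_2, x_3 - S(t_1,t_2)) \in E$ and $\phi(t_1,t_2) \ge \phi_0$, whence $Tf(x) \gtrsim \sigma_1\sigma_2$ on a set of measure $\sim \sigma_1\sigma_2\sigma_3$ and $\|Tf\|_q \gtrsim \sigma_1\sigma_2(\sigma_1\sigma_2\sigma_3)^{1/q}$. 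Boundedness of $T$ from $L^p$ to $L^q$ thus forces $\sigma_1\sigma_2 \lesssim (\sigma_1\sigma_2\sigma_3)^{1/p - 1/q}$ uniformly in $\lambda$; since $\sigma_1\sigma_2 \sim \lambda^{\beta_1+\beta_2}$ and $\sigma_1\sigma_2\sigma_3 \sim \lambda^{(\beta_1+\beta_2)(1+d)}$, letting $\lambda \to 0^+$ gives $1 \ge (1+d)(1/p - 1/q)$, i.e. $1/q \ge 1/p - 1/(1+d) = 1/p - \eta/(\eta+1)$. Hence $T$ is unbounded from $L^p$ to $L^q$ whenever $1/q < 1/p - \eta/(\eta+1)$; for $\eta \le 1/2$ this is essentially the computation of Theorem 1.5 of [G3].

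For the consequence, assume in addition $\eta \le 2\eta'/(1+2\eta')$ so that Theorem 1.3 applies. Its closed trapezoid is the intersection of the four closed half-planes $\{y \le x\}$, $\{y \ge x - \eta/(\eta+1)\}$, $\{y \ge x/3\}$, $\{y \ge 3x - 2\}$ in the $(1/p, 1/q)$-plane, so its complement is the union of the four opposite open half-planes and it suffices to exhibit unboundedness in each. In $\{y > x\}$ (that is, $q < p$), in $\{y < 3x - 2\}$, and in its dual $\{y < x/3\}$, unboundedness is exactly what was observed before Definition 1.1 — testing respectively on functions equal to $|x|^{-\alpha}$ away from the origin, on indicators of balls shrinking to the origin, and duality — while in $\{y < x - \eta/(\eta+1)\}$ it is the assertion just proved. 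Therefore $T$ fails to be bounded from $L^p$ to $L^q$ for every $(1/p, 1/q)$ outside the closed trapezoid of Theorem 1.3, which is the stated sharpness. Apart from the pointwise bound flagged above, the $L^p$ to $L^q$ bookkeeping, the vertex/compact-edge/ray case distinction, and this assembly are all routine.
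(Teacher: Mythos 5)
Your proposal is correct and follows essentially the same route as the paper: the hypothesis is used only to invoke [G5] and get $\eta = 1/d(S)$, the failure below the line ${1 \over q} = {1 \over p} - {\eta \over \eta+1}$ is obtained by testing on rectangular boxes adapted to a supporting line of $N(S)$ at $(d,d)$ (the computation the paper delegates to Theorem 1.5 of [G3], which you carry out explicitly, including the ray case), and the sharpness statement is assembled from this together with the $q<p$, shrinking-ball, and duality obstructions already recorded in the paper.
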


The condition on $N(S)$ in the above theorem is weaker than nondegeneracy in the sense of Varchenko [V], which requires $o(e) \leq 1$ for all
compact edges.
 Varchenko's condition is generic in the following sense. Suppose that $g(x,y)$ is a real analytic function on a neighborhood of 
the origin with $g(0,0) = 0$. Let $g_f(x,y) = \sum_e g_e(x,y)$ where the sum is taken over all compact dges
of $N(g)$. Then given a fixed Newton polygon $N$, the set of monomial coefficients of the $g_f(x,y)$  for which $g$  satisfying $N(g) = N$ is not nondegenerate is of measure zero in the appropriate $\R^n$. We refer to Chapter 6 of [AGuV] for more details on such matters. Since the Newton polygon conditions of 
Theorem 1.5 are weaker than nondegeneracy, one can view these conditions as generic in the same way. 
Thus in this sense generically one cannot go below the line $y = x - {\eta \over \eta + 1}$ of the trapezoid in Theorem 1.3. In view of the 
optimality of the other 
three sides, in this way generically Theorem 1.3 is best possible whenever we have $\eta \leq {2\eta' \over 1 + 2\eta'} $.  Hence the final statement of Theorem 1.5. As we will 
see in the next section, there is a similar sense in which generically one has $\eta = {2\eta' \over 1 + 2\eta'} $ and the intersection of the latter generic situation with the former 
can also be viewed as generic. In this sense Theorem 1.3 is generically optimal up to endpoints.

\section{The condition $\eta = {2\eta' \over 1 + 2\eta'} $}

\subsection{More about Newton polygons}

To help understand when $\eta = {2\eta' \over 1 + 2\eta'} $, and therefore we are in a setting  where
Theorem 1.1 and possibly 1.3 give results that are sharp up to endpoints, we again turn our attention to Newton polygons.
Let $S(x,y)$ be as in $(1.1)$, and we consider its 
Newton polygon $N(S)$. We denote the successive 
vertices of $N(S)$ by $(a_1,b_1),...,(a_n,b_n)$, where $b_{i+1} > b_i$ for each $i$. Let $e_i$ denote the edge of $N(S)$ connecting $(a_i,b_i)$ to
$(a_{i+1},b_{i+1})$ and we denote the slope of $e_i$ by $-{1 \over m_i}$, so that $m_i$ is decreasing in $i$. We stipulate that $m_0 = \infty$
and $m_n = 0$. We also make the following definition.

Let $H(x,y)$ denote the Hessian determinant of $S(x,y)$, and we suppose $H(0,0) = 0$ but $H$ is not identically zero. Suppose $(a_i,b_i)$ is a vertex of
$N(S)$ such that neither $a_i$ nor $b_i$ is zero. Suppose $l$ is a line in the $xy$ plane containing $(a_i,b_i)$ whose slope $-{1 \over m}$ satisfies 
$m_{i-1} > m > m_i$. Then $l$ intersects $N(S)$ in exactly one point, $(a_i,b_i)$. This situation can be described by saying that $a + mb$ is minimized
over all nonzero terms $S_{a,b} x^a y^b$ of the Taylor expansion of $S(x,y)$ in the single term $S_{a_i,b_i} x^{a_i} y^{b_i}$. Correspondingly, 
for such $m$, $a + mb$ is minimized over all nonzero terms of the Taylor expansion of $H(x,y)$ in the term given by the Hessian determinant of
$S_{a_i,b_i}x^{a_i}y^{b_i}$, namely $(S_{a_i,b_i})^2 a_ib_i(1 - a_i - b_i)x^{2a_i - 2}y^{2b_i - 2}$.

Thus if the line $y = x$ intersects $N(S)$ at the vertex $(d,d)$, the line $y = x$ intersects $N(H)$ at the vertex $(2d - 2, 2d - 2)$. By the earlier
discussion, this implies that  $\eta = {1 \over d}$ and $\eta' = {1 \over 2d - 2}$, in which case $\eta = {2\eta' \over 1 + 2\eta'} $. 
If the line $y = x$ 
intersects $N(S)$ in the interior of the vertical ray $x = d$ or horizontal ray $y = d$ that doesn't intersect the opposing coordinate axis, the line $y = x$ intersects $N(H)$ in the interior of the vertical ray $x  = 2d - 2$ or horizontal ray $y = 2d - 2$ respectively. Thus again $\eta = {1 \over d}$ and $\eta' = {1 \over 2d - 2}$, so that $\eta = {2\eta' \over 1 + 2\eta'} $. 

If the line $y = x$ intersects $N(S)$ in the interior of a bounded edge $e$ that doesn't intersect
either coordinate axis, then the line $y = x$ intersects $N(H)$ in the interior of a bounded edge $e'$ with the same slope. So for example if the slope of
this edge is not an integer or the reciprocal of an integer, once again $\eta = 
{1 \over d(S)}$ and $\eta' = {1 \over d(H)}$. The same holds whenever  $o(e) \leq d(S)$ and $o(e') \leq  d(H)$. In these situations, as before if
 the line  $y = x$ intersects $N(S)$ at $(d,d)$  the line $y = x$ intersects $N(H)$ at the vertex 
$(2d - 2, 2d - 2)$, so that again we have $\eta = {2\eta' \over 1 + 2\eta'} $.

Thus by the above discussion, we see that the only way we could have ${2\eta' \over 1 + 2\eta'} \neq \eta$ is if $N(S)$ has exactly one vertex, located
on a coordinate axis, or if the line $y = x$ intersects $N(S)$ in the interior of a bounded edge $e$ which either intersects a coordinate axis or satisfies 
one or both of the conditions $o(e) > d(S)$ and $o(e') > d(H)$. For given $N(S)$ and $N(H)$, these conditions are 
rare in the sense described at the end of section 1. When none 
of these exceptional
cases occur, both Theorem 1.1 and Theorem 1.3 will be sharp up to endpoints. Furthermore, as we will see in
the examples below, even if the edge $e$ intersects one or both coordinate axes one still often has sharpness in Theorems 1.1 and 1.3 in 
conjunction with an $\eta = {2\eta' \over 1 + 2\eta'} $ situation. The same is true if
$N(S)$ has exactly one vertex, located on a coordinate axis.

\subsection{Additional examples where $\eta \leq {2\eta' \over 1 + 2\eta'}$}

We consider the situation where the graph of $S(x,y)$ in $(1.1)$ is convex and of finite line type, meaning that every line tangent to the graph is 
not tangent the surface to infinite order. By a theorem of Schulz [Sc], after a linear coordinate change the Newton polygon $N(S)$ will have exactly two vertices, 
one of the form $(a,0)$ and the other of the form $(0,b)$ for even integers $2 \leq b \leq a$. Thus there is one bounded edge of $N(S)$, joining $(a,0)$ and
$(b,0)$. In this setting, Theorem 1.4 of 
the paper [ISaSe] gives $L^p$ to $L^q_s$ results. The $L^p$ to $L^p_s$ statement is as follows.

\begin{theorem}{\rm [ISaSe]} If the graph of $S$ is convex and of finite line type, there is a neighborhood $U$ of the origin
such that if $\phi(x,y)$ is supported in $U$, then $T$ is bounded from $L^p$ to $L^p_s$ if $({1 \over p}, s)$ is in the interior of
the polygon with vertices   $(0,0)$, $({1 \over a}, {2 \over a}), ({1 \over b}, {1 \over a} + {1 \over b}), (1 - {1 \over b}, {1 \over a} + {1 \over b}), 
(1 - {1 \over a}, {2 \over a}),$ and $(1,0)$. 
\end{theorem}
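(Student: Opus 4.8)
This is the $p=q$ specialization of Theorem 1.4 of [ISaSe], so one legitimate route is simply to quote that paper. The plan I would follow to reprove it, and to locate it relative to the results above, begins with Schulz's normalization [Sc]: after a linear change of variables $N(S)$ has a single compact edge joining $(a,0)$ to $(0,b)$ with $2\le b\le a$ even, and one writes $S=P+R$ with $P$ the edge polynomial (convex, $P\ge 0$) and $R$ of strictly higher Newton order. For the simplest surfaces in this class, namely $S=x^a+y^b$, the polygon of Theorem 2.2 collapses onto a region already obtained here — the trapezoid of Theorem 1.1 when $a=b$, and the polygon of part (2) of Theorem 1.2 when $b=2$ — so in those cases the conclusion is immediate from what we have proved (and for other convex $S$ in these two regimes the region of Theorem 1.1 or part (2) of Theorem 1.2 typically already contains it). Only the range $a>b\ge 4$, where the two slanted sides of slope $1$ in Theorem 2.2 are not produced by Theorems 1.1--1.2, genuinely requires the argument of [ISaSe].

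That argument follows the usual template for $L^p\to L^p_s$ estimates for Radon-type averages. I would take a Littlewood--Paley decomposition $T=\sum_j T_j$ in the frequency variable, split each $T_j$ using the anisotropic dilations attached to the edge (those fixing $P$ to leading order) together with a separation of frequencies near the $\xi_3$-axis from frequencies near a conormal at a point of the graph where a principal curvature does not vanish, and establish three families of bounds for the resulting pieces. The first is an $L^2\to L^2$ estimate with gain $2^{-j\eta}$, $\eta=\tfrac1a+\tfrac1b=\tfrac1{d(S)}$, the sharp surface-measure Fourier-decay rate, obtained by van der Corput and stationary phase; convexity enters here to force $H\ge 0$, so that the only degeneracies are the anticipated edge ones. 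The second consists of $L^1\to L^1$ and $L^\infty\to L^\infty$ bounds for the anisotropic pieces, with the loss read off from the scaling. The third is a refinement of the second, obtained by further resolving each anisotropic piece near the coordinate axes — where the graph degenerates to a cylinder over a curve vanishing to order $a$ (resp.\ $b$) — using the sharp one-variable oscillatory estimates for $x^a$ and $y^b$. Interpolating these three families and summing $\sum_j 2^{js}\|T_jf\|_p$ forces the geometric series to converge exactly for $(\tfrac1p,s)$ in the open polygon, the data $a$ and $b$ entering through the constraints $s<2/p$, $s<\tfrac1p+\tfrac1a$ and $s<\tfrac1a+\tfrac1b$ together with their reflections. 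The closed upper, leftmost and rightmost edges are then recovered by replacing the crude sum on those sides with a Littlewood--Paley square-function estimate for the horizontal edge and a maximal-function comparison for the slanted ones.

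The main obstacle is the third family of estimates, which is exactly what produces the slanted edges $s=\tfrac1p+\tfrac1a$ and its reflection; for those, $L^2$ together with $L^1$ and $L^\infty$ is not enough. One has to exploit that along each coordinate axis the surface is a cylinder over a curve vanishing to order $a$ (resp.\ $b$), that convexity prevents any worse behaviour off the axes, and that the relevant sublevel-set and oscillatory estimates hold in logarithm-free form, so that the double summation — over the frequency scale $j$ and over the anisotropic scale — converges with no extra logarithmic loss. The technical heart of the proof is controlling the overlap of the anisotropic blocks and carrying out the square-function and maximal-function steps at the endpoints; the rest is routine manipulation of the Newton data $(a,b)$.
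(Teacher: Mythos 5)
This statement is not proved in the paper at all: it is quoted, with attribution, as the $p=q$ case of Theorem 1.4 of [ISaSe], so your primary route of simply citing that paper is exactly what the paper itself does, and your further sketch of the Littlewood--Paley/anisotropic-scaling argument goes beyond anything the paper supplies (and need not be checked against it). The one caveat worth recording is that your claim that the $a=b$ and $b=2$ cases are ``immediate'' from Theorems 1.1 and 1.2 yields only the open polygon, not the closed upper, leftmost and rightmost edges asserted in the statement, so even in those special cases the boundary part still rests on [ISaSe].
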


The statement for $L^p$ to $L^q$ boundedness in [ISaSe] can be derived from the above using a similar interpolation argument to one we used to get Theorems 1.3 and 1.4 from Theorems 1.1 and 1.2. Note that the edges of the polygon in the above theorem have slopes $2,1,0,-1,$ and $2$ respectively,
with the edges of slope $1$ and $-1$ disappearing if $a = b$.
The above result can be stronger than that of Theorem 1.1 if $\eta > {2\eta' \over 1 + 2\eta'}$, such as in the case where $S(x,y) = x^a + y^b$
for distinct even integers $a$ and $b$. But there are many situations where $\eta = {2\eta' \over 1 + 2\eta'}$, in which case Theorems 1.1 and 1.3 are 
sharp up to endpoints and will give stronger results. We next describe a class of examples that illustrates this phenomenon.

\noindent {\bf Example 1.} Suppose $S(x,y) = y^4 + ay^2x^4 + x^8 + f(x,y)$, where $a$ is any real number and $f(x,y)$ is a real analytic function
all of whose nonzero Taylor expansion terms $f_{a,b}x^ay^b$ are such that $(a,b)$ is above the line joining $(8,0)$ to $(0,4)$. Thus $N(S)$ has 
two vertices, $(0,4)$ and $(8,0)$, and a single edge joining them. Then a direct computation reveals that $H(x,y) = 144ax^2y^4 + (672 - 40a^2)x^6y^2
+ 112a x^{10} + g(x,y)$, where every nonzero term $g_{a,b}x^ay^b$ of $g$'s Taylor expansion lies above the line joining $(10,0)$ to $(0,5)$.

The Newton distances $d(S)$ and $d(H)$ are computed readily, with $d(S) = {8 \over 3}$ and $d(H) = {10 \over 3}$ if $a \neq 0$. The line $y = x$ intersects $N(S)$
in the interior of a compact edge $e$ with $S_e(x,y) =   y^4 + ay^2x^4 + x^8$, and when $a \neq 0$, the line $y = x$ intersects $N(H)$
in the interior of a compact edge $e'$ with $H_{e'}(x,y) =  144ax^2y^4 + (672 - 40a^2)x^6y^2 + 112a x^{10} $. Since both $S_e(x,y)$ and $H_{e'}(x,y)$
are functions of $y^2$ for fixed $x$, the maximum order of any zero of $S_e(1,y)$ or $H_{e'}(1,y)$ for $y \neq 0$ is two, which is less than either 
Newton distance. Thus here $\eta = {1 \over d(S)} = {3 \over 8}$ and $\eta' = {1 \over d(H)} = {3 \over 10}$, and we have ${2\eta' \over 1 + 2\eta'} 
= \eta$. Hence whenever $a \neq 0$, we  are in the situation where Theorems 1.1 and 1.3 give estimates that are sharp up to endpoints.

We next consider when this class of examples corresponds to surfaces that are convex and of finite line type. For simplicity we assume $f(x,y)  = 0$
so that $S(x,y) = y^4 + ay^2x^4 + x^8$ and $H(x,y) = 144ax^2y^4 + (672 - 40a^2)x^6y^2 + 112a x^{10}$.
 A necessary condition is that the Hessian 
determinant is never negative. This rules out $a$ being negative as in this case the Hessian determinant is negative on the $x$ axis. If $a > 0$ and the discriminant of
$b(z) = 144az^2 + (672 - 40a^2)z + 112a $ is negative, then $b(z)$ has complex conjugate roots and thus $b(z)$ is a positive function. The same will 
then be true for $H(x,y) =  144ax^2y^4 + (672 - 40a^2)x^6y^2 + 112a x^{10}$ except on the $y$ axis. Thus $S(x,y)$ must be either strictly convex or strictly 
concave outside the $y$ axis but since $S(x,0)$ is convex in $x$ for example, we conclude $S(x,y)$ is strictly convex outside the $y$ axis.
As for the $y$ axis, one can directly verify that at each $(0,y_0)$ and each direction $v$, the second directional derivative in the $v$ direction is nonnegative, and some higher directional derivative is nonzero. We conclude that $S(x,y)$ is convex and of finite line type everywhere if the discriminant of $b(z)$ is 
negative. Similarly, if the discriminant of $b(z)$ is positive, but the roots of $b(z)$ are negative, corresponding to the case where $672 - 40a^2 > 0$, then $H(1,y) = 144ay^4 + (672 - 40a^2)y^2 + 112a$ has purely imaginary roots and by similar considerations to the above, $S(x,y)$ will be convex and of finite line type.

In summary, $S(x,y)$ will be convex and of finite line type if $a > 0$ and either the discriminant of $b(z)$ is negative or the discriminant of $b(z)$ is 
positive but $672 - 40a^2 > 0$. This discriminant is calculated to be $64(25a^4 - 1848 a^2 + 7056)$, which is positive from $0$ to $2.0097...$, negative
from $2.0097...$ to $8.3595...$, and positive beyond $8.3595....$. In the first interval one has $672 - 40a^2 > 0$, so we conclude that $S(x,y)$ is convex
and of finite line type whenever $0 < a < 8.3595...$ In these cases Theorems 1.1 and 1.3 give optimal boundedness domains up to endpoints, and these
will be larger than that of the $a = 0$ case, which is the domain given by Theorem 1.4 of [ISaSe]. 

The phenomenon of the above example occurs in various other families, convex and nonconvex. We give the following example.

\noindent {\bf Example 2.} Suppose $S(x,y) = y^3 + ay^2x + byx^2 + x^3 + f(x,y)$ for some real $a$ and $b$, where $f(x,y)$ has a zero of order
$4$ or greater at the origin. Note that any function with a zero of order 3 at the origin can be converted into this form after an appropriate linear transformation. Then $N(S)$ has vertices at $(0,3)$ and $(3,0)$,
and a single edge $e$ that connects them. The Newton distance $d(S)$ is then ${3 \over 2}$. 
The function $S_e(1,y) = y^3 + ay^2 + by + 1$ has a double root $r$ if and only if $(a,b) = (-2r + {1 \over r^2}, r^2 - {2 \over r})$, so the set of $(a,b)$
for which $S_e(1,y)$ has a double root has measure zero. If $(a,b)$ is not in this set then $o(e) = 0$ or $1$, so that $o(e) < d(S)$ and as a result 
$\eta = {1 \over d} = {2 \over 3}$.

The Hessian of $S(x,y)$ can be computed to be $H(x,y) = (12b - 4a^2)y^2 + (36 - 4ab)xy + (12a - 4b^2)x^2 + g(x,y)$ for some $g(x,y)$ with a zero
of order $3$ or greater at the origin.
Suppose $12b - 4a^2, 12a - 4b^2 \neq 0$. Then $N(H)$ will have one edge $e'$, connecting the two vertices $(2,0)$ and $(0,2)$.
Note that $d(H) = 1$ and that
$H_{e'}(1,y) = (12b - 4a^2)y^2 + (36 - 4ab)y + (12a - 4b^2) $ will only have a double root if the discriminant $(36 - 4ab)^2 - 4(12b - 4a^2)(12a - 4b^2)$ is zero. 
This corresponds to $(a,b)$ belonging to a curve in $\R^2$. Thus for $(a,b)$ outside a set of measure zero, $o(e') \leq 1 = d(H)$. For these situations
$\eta' = 1$. 

Combining the above, we see that for $(a,b)$ outside a set of measure zero, we have $\eta = {2\eta' \over 1 + 2\eta'}$ and Theorem 1.1 is sharp up to 
endpoints. Since we have $o(e) \leq d(S)$ and $o(e') \leq d(H)$ in these situations, we also have that Theorem 1.3 is sharp up to endpoints.

We next give a class of examples corresponding to the case where $N(S)$ has exactly one vertex, which lies on a coordinate axis.

\noindent {\bf Example 3.} Let $S(x,y) = y^n + axy^n $ for some $n \geq 2$, $a \neq 0$. 
 Then $N(S)$ has one vertex, at $(0,n)$. In this case the
Hessian $H(x,y)$ is $-a^2n^2y^{2n - 2}$. Thus $d(S) = n$ and $d(H) = 2n - 2$. In both cases the line $y = x$ intersects the 
Newton polygon in the interior of the horizontal ray, so we have $\eta = {1 \over d(S)} = {1 \over n}$ and $\eta' = {1 \over d(H)} = {1 \over 2n - 2}$ respectively. Thus once again we have 
$\eta = {2\eta' \over 1 + 2\eta'}$ and the boundedness regions provided by Theorem 1.1 or 1.3 are optimal up to endpoints. 

Lastly, we give a class of examples that illustrates that one can have $\eta < {2\eta' \over 1 + 2\eta'}$. This class of functions also shows you can have
sharpness in Theorem 1.1 without having $o(e) \leq d(S)$ for the edge $e$ of $N(S)$ intersecting the line $y = x$.

\noindent{\bf Example 4.} Let $S(x,y) = (y - x^m)^n$ for some integers $m, n \geq 2$. Then by changing variables from $(x,y)$ to $(x, y - x^m)$, we see
that $\eta$ here is that of $y^n$, namely ${1 \over n}$. On the other hand, the Hessian determinant  of $S(x,y)$ is $H(x,y) = -(m-1)m(n-1)n^2 x^{m-2}(y - x^m)^{2n-3}$. 
By the same variable change, we see that $\eta'$ here is that of $x^{m-2}y^{2n - 3}$, namely $\min({1 \over m - 2}, {1 \over 2n - 3})$. Thus
${2\eta' \over 1 + 2\eta'} = \min({2 \over m}, {2 \over 2n - 1})$, which is at least $\eta$ whenever $m \leq 2n$.
Thus whenever $m \leq 2n$, Theorem 1.1 is sharp up to endpoints. 

On the other hand, the Newton polygon of $S(x,y)$ has an edge $e$ connecting $(mn,0)$ to $(0,n)$, so that the Newton distance $d(S)$
 is ${mn \over m + 1}$. Thus $o(e) = n$ is always greater than $d(S)$. Hence our condition giving sharpness up to endpoints in Theorem 1.3 does not hold here. 

\section{Resolution of singularities and some consequences}

\subsection{The original resolution of singularities theorem}

We now make use of resolution of singularities results theorems from [G4], which were also used in [G2].  Suppose we have real analytic functions $f_1(x,y),...,f_k(x,y)$
 on a neighborhood of the origin, none identically zero, with
$f_j(0,0) = 0$ for each $j$.  Denote the Taylor expansion of  $f_j(x,y)$ by $\sum_{\alpha,\beta} f_{\alpha\beta}^j x^{\alpha}y^{\beta}$ and
denote by $o_j$ the order of the zero $f_j(x,y)$ at the origin. After rotating coordinates if necessary, we assume $f_{0, o_j}^j \neq 0$ for each $j$.

Divide the
$xy$ plane into eight triangles by slicing the plane using  the $x$ and $y$ axes  and two lines through the origin, one of the form $y = mx$ for some $m > 0$ and one of the form $y = mx$ for some $m < 0$.  These two lines should not be lines on which
which any  function $\sum_{\alpha + \beta = o_j} f_{\alpha\beta}^jx^{\alpha}y^{\beta}$ vanishes, except at the
origin. After reflecting about the $x$ and/or $y$ axes and/or the line $y = x$ if necessary, each of the triangles becomes of the form $T_b = \{(x,y) \in \R^2: x > 0,\,0 < y < bx\}$ (modulo an inconsequential boundary set of measure zero). Then Theorems 2.1 and 2.2 of [G4] give the following.

\begin{theorem}  Let  $T_b = \{(x,y) \in \R^2: x > 0,\,0 < y < bx\}$ be as above. Abusing notation slightly, use the notation $f_j(x,y)$ to denote the reflected function $f_j(\pm x,\pm y)$ or $f_j(\pm y, \pm x)$ corresponding to $T_b$.
 Then there is a $a > 0$ and a positive integer $N$ such that
if $F_{a,b} = \{(x,y) \in \R^2: 0 \leq x\leq a, \,0 \leq y \leq bx\}$, then one can write $F_{a,b} = \cup_{i=1}^n cl(D_i)$, such that for to each $i$ there is a $k_i(x) = l_i x^{s_i} + ...$ with $k_i(x^N)$ real analytic and $s_i \geq 1$ such that after a coordinate change of the form $\eta_i(x,y) = (x, \pm y + k_i(x))$, the set $D_i$ becomes a set $D_i'$ in the upper right quadrant on which each function $f_j \circ \eta_i(x,y)$ approximately becomes a monomial 
$d_{ij} x^{\alpha_{ij}}y^{\beta_{ij}}$, $\alpha_{ij}$ a nonnegative rational number and $\beta_{ij}$ a nonnegative integer in the following sense.

\begin{enumerate}
\item  $D_i' = \{(x,y): 0 < x < a, \, g_i(x) < y < G_i(x)\}$, where $g_i(x^N)$ and $G_i(x^N)$ are
real analytic. If we expand $G_i(x) =  H_i x^{M_i} + ...$, then $M_i \geq 1$ and $H_i > 0$. 
\item The function $g_i(x)$ is either identically zero or $g_i(x)$ 
can be expanded as $h_ix^{m_i} + ...$ where $h_i > 0$ and $m_i > M_i$. The $D_i'$ can
be constructed such that such that for any predetermined $\kappa > 0$ there is a $d_{ij} \neq 0$ such that on $D_i'$, for all $0 \leq l \leq \alpha_{ij}$ and all $0 \leq  m \leq \beta_{ij}$ one has
\[|\partial_x^l\partial_y^m(f_j \circ \eta_i)(x,y) -  \alpha_{ij}(\alpha_{ij}- 1)...(\alpha_{ij}- l + 1)\beta_{ij}(\beta_{ij} - 1)...(\beta_{ij} - m + 1)
d_{ij}x^{\alpha_{ij}- l}y^{\beta_{ij} - m}| \]
\[\leq \kappa |d_{ij}| x^{\alpha_{ij}- l}y^{\beta_{ij} - m} \tag{3.1}\]
\end{enumerate}
\end{theorem}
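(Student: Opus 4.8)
The statement is essentially Theorems 2.1--2.2 of [G4], so one option is simply to cite it; the plan below sketches the argument. The idea is to prove Theorem 3.1 by an iterated Newton‑polygon/Puiseux‑expansion algorithm, carried out simultaneously for the finitely many functions $f_1,\dots,f_k$. First I would fix one triangle $T_b$ (the eight‑triangle decomposition and the reflections are already given) and replace each $f_j$ by the corresponding reflected function, so that $f_j(0,y)$ has a zero of order exactly $o_j$ at $y=0$ (this is what $f^j_{0,o_j}\neq 0$ guarantees). By the Weierstrass preparation theorem each such $f_j$ factors on a neighborhood of the origin as $f_j(x,y)=u_j(x,y)\prod_{l=1}^{o_j}\bigl(y-\phi_{jl}(x)\bigr)$, with $u_j$ a non‑vanishing real analytic unit and the $\phi_{jl}$ Puiseux series convergent for small $x\ge 0$; grouping complex conjugate pairs keeps everything real analytic, and one may choose a single integer $N$ so that every $\phi_{jl}(x^N)$ is real analytic. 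From here the whole proof is bookkeeping about the relative sizes of $y$ and the $\phi_{jl}(x)$ on $T_b$.

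Second comes the subdivision step. I would order all the roots $\phi_{jl}$ by their leading exponents and leading coefficients and group them into ``clusters'' that agree up to a suitable truncation. This partitions a neighborhood of $\{0<y<bx\}$ into (i) thin curved sectors $g_i(x)<y<G_i(x)$ hugging a single cluster, on which all roots of that cluster are close to a common truncated Puiseux polynomial $k_i(x)=l_ix^{s_i}+\cdots$, and (ii) ``gap'' sectors lying between two consecutive size‑scales, on which every factor $y-\phi_{jl}(x)$ is comparable either to a pure power of $x$ or to $y$ times a power of $x$, so each $f_j$ is already monomial‑like and one takes $k_i\equiv 0$. On a cluster sector one performs the shear $\eta_i(x,y)=(x,\pm y+k_i(x))$ and passes to the residual configuration; because all roots meeting $T_b$ have order $\ge 1$ one gets $s_i\ge1$, and because $k_i$ is a truncated Puiseux polynomial with denominators bounded by $N$, $k_i(x^N)$ is real analytic. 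One then recurses inside each sector. One must also check that the finitely many curved regions produced cover $F_a=\{0\le x\le a,\ 0\le y\le bx\}$ up to a set of measure zero and that the defining curves remain real analytic in $x^{1/N}$; keeping $N$ uniform across all the (finitely many) Puiseux series involved handles this, and the dichotomy ``$g_i\equiv 0$ or $g_i=h_ix^{m_i}+\cdots$ with $h_i>0,\ m_i>M_i$'' records whether the cluster abuts the shifted $x$‑axis or is a genuinely pinched curved triangle.

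The main obstacle is showing the algorithm terminates and monomializes \emph{all} the $f_j$ at once. The monovariant is a combinatorial measure of the configuration --- e.g. the maximum over $j$ of the number of roots $\phi_{jl}$ meeting the current sector, equivalently the height of the relevant part of the Newton polygon of $f_j\circ\eta_i$: each shear that is actually needed strictly lowers this quantity for the function that forced it while not increasing it for the others, and since every $o_j$ is finite the process stops after finitely many steps. Simultaneity is the delicate point: a shear chosen to resolve $f_1$ must be compatible with the cluster structure of $f_2,\dots,f_k$, so at each stage one refines the subdivision to be adapted to all $k$ functions at once and verifies the monovariant still drops. I expect this termination‑with‑simultaneity bookkeeping to be the crux of the argument; the rest is standard.

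Finally, the derivative estimate (3.1). After the change of variables, on each $D_i'$ one has $f_j\circ\eta_i(x,y)=d_{ij}x^{\alpha_{ij}}y^{\beta_{ij}}v_{ij}(x,y)$ with $v_{ij}$ real analytic in $(x^{1/N},y)$, $v_{ij}(0,0)=1$, and $d_{ij}\neq 0$ for the surviving branch. Since $D_i'$ is a thin sector pinched at the origin and one is free to shrink $a$ and narrow the sector, $v_{ij}$ and all its derivatives tend uniformly to those of the constant $1$. Differentiating the product $x^{\alpha_{ij}}y^{\beta_{ij}}v_{ij}$ by the Leibniz rule, the leading term reproduces exactly $\alpha_{ij}(\alpha_{ij}-1)\cdots(\alpha_{ij}-l+1)\beta_{ij}(\beta_{ij}-1)\cdots(\beta_{ij}-m+1)d_{ij}x^{\alpha_{ij}-l}y^{\beta_{ij}-m}$, while every remaining term carries an extra factor of $x$, of $y$, or of a derivative of $v_{ij}-1$, hence is bounded by $\eta|d_{ij}|x^{\alpha_{ij}-l}y^{\beta_{ij}-m}$ once $a$ is small enough; the restriction $0\le l\le\alpha_{ij}$, $0\le m\le\beta_{ij}$ is exactly the range in which the differentiated monomial still has nonnegative exponents. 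This gives (3.1) and completes the proof.
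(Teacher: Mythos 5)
Your proposal is correct and matches the paper's treatment: the paper does not reprove this statement but simply quotes Theorems 2.1--2.2 of [G4], and your sketch (Weierstrass/Puiseux factorization, clustering of roots, shears $\eta_i(x,y)=(x,\pm y+k_i(x))$ iterated via the Newton polygon, then the Leibniz-rule argument for $(3.1)$) is the same resolution algorithm that underlies [G4]. So citing [G4], as you note at the outset, is exactly what the paper does.
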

\noindent As stated in Corollary 2.3 of [G4], the proofs of Theorems 2.1 and 2.2 of [G4] imply the following corollary.

\begin {corollary} For any given $K$, however large, the $D_i'$ can be constructed 
so that there is a constant $C_K$ so that on $D_i'$ one has $|\partial_x^{a}\partial_y^{b} (f_j \circ\eta_i) (x,y)| \leq C_K x^{-a}y^{-b}
 |(f_j\circ \eta_i)(x,y)|$ for all $a, b < K$ and all $j$.
\end{corollary}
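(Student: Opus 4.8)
The plan is to derive the corollary from Theorem 3.1 together with the analyticity of $F := f_j\circ\eta_i$, by isolating two estimates on $D_i'$: the lower bound $|F(x,y)| \ge \tfrac12|d_{ij}|\, x^{\alpha_{ij}} y^{\beta_{ij}}$ and the matching upper bound $|\partial_x^a\partial_y^b F(x,y)| \le C_{ij,K}\,|d_{ij}|\, x^{\alpha_{ij}-a} y^{\beta_{ij}-b}$ valid for all integers $a,b < K$. Once both are in hand the corollary follows at once, since then $|\partial_x^a\partial_y^b F| \le C_{ij,K}\,|d_{ij}|\, x^{-a}y^{-b}\, x^{\alpha_{ij}}y^{\beta_{ij}} \le 2 C_{ij,K}\, x^{-a}y^{-b}\,|F|$ on $D_i'$, and one takes $C_K$ to be the largest of the finitely many constants $2C_{ij,K}$. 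The lower bound is just the $l=m=0$ case of $(3.1)$ applied with the predetermined constant $\eta$ taken to be, say, $1/2$, which gives $|F - d_{ij}x^{\alpha_{ij}}y^{\beta_{ij}}| \le \tfrac12|d_{ij}|x^{\alpha_{ij}}y^{\beta_{ij}}$ on $D_i'$; and for $0\le a\le\alpha_{ij}$, $0\le b\le\beta_{ij}$ the upper bound is read off directly from $(3.1)$. So the real task is to extend the upper bound to $a$ or $b$ exceeding $\alpha_{ij}$ or $\beta_{ij}$, a range about which $(3.1)$ is silent.

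For this I would use the full Taylor expansion of $F$ rather than finitely many of its derivatives. By construction $F$ is real analytic in $(x^{1/N},y)$ on a polydisc about the origin, so on $D_i'$ we may write $F(x,y) = \sum_{\mu,\nu} c_{\mu\nu} x^\mu y^\nu$ with $\mu\in\tfrac1N\mathbb{Z}_{\ge0}$, $\nu\in\mathbb{Z}_{\ge0}$, and $|c_{\mu\nu}| \le C_0\,\rho_0^{-(N\mu+\nu)}$ for some $\rho_0 > 0$. The property I would extract by inspecting the proofs of Theorems 2.1 and 2.2 of [G4] --- this is what ``approximately a monomial'' amounts to for the entire series, and it is the crux of the corollary --- is that the $D_i'$ can be built so that every nonzero term of $F$ obeys $\mu + M_i\nu \ge \alpha_{ij} + M_i\beta_{ij}$, i.e. $d_{ij}x^{\alpha_{ij}}y^{\beta_{ij}}$ is the leading term of $F$ along the outer edge $y\sim G_i(x)$, together with $\nu\ge\beta_{ij}$ for all nonzero terms when $g_i\equiv0$ and $\mu + m_i\nu \ge \alpha_{ij} + m_i\beta_{ij}$ (leading term along the inner edge $y\sim g_i(x)$) when $g_i\not\equiv0$. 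Granting this, on $D_i'$ one has $\tfrac12 h_i x^{m_i} \le g_i(x) < y < G_i(x) \le 2H_i x^{M_i}$ (the leftmost inequality absent when $g_i\equiv0$) provided the $x$-extent of $D_i'$ is a small enough constant $\rho$; bounding $y^{\nu-\beta_{ij}}$ above by a power of $G_i(x)$ when $\nu\ge\beta_{ij}$ and below by a power of $g_i(x)$ when $\nu<\beta_{ij}$ and invoking the displayed inequalities shows that each term satisfies $|c_{\mu\nu}| x^{\mu-\alpha_{ij}}y^{\nu-\beta_{ij}} \le C_0\, B^{\nu}\, \rho^{(\mu+\nu)/2}$ on $D_i'$ for a fixed $B$, once finitely many lowest-degree monomials --- each individually bounded on $D_i'$ by the same inequalities --- have been split off. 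Differentiating term by term gives $\partial_x^a\partial_y^b F = \sum_{\mu,\nu} c_{\mu\nu}\,[\mu]_a[\nu]_b\, x^{\mu-a}y^{\nu-b}$ with $[\mu]_a = \mu(\mu-1)\cdots(\mu-a+1)$, and since $|[\mu]_a[\nu]_b|$ grows only polynomially in $\mu,\nu$ (of degree $<K$ in each) while the bound above decays geometrically once $\rho$ is below a threshold depending only on $\rho_0, M_i, H_i, m_i, h_i$ and $N$ --- in particular not on $K$ --- the differentiated series converges absolutely and is at most $C_{ij,K}\,|d_{ij}|\, x^{\alpha_{ij}-a}y^{\beta_{ij}-b}$ on $D_i'$, uniformly over $a,b<K$ (the constant growing with $K$, which is permitted). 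This yields the upper bound, and hence the corollary; in fact a single construction of $D_i'$ works for all $K$ at once.

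The main obstacle is the structural fact quoted above: showing that the resolution of [G4] can be arranged so that the leading monomial $d_{ij}x^{\alpha_{ij}}y^{\beta_{ij}}$ dominates the \emph{entire} Taylor tail of $f_j\circ\eta_i$ over the curved sector $D_i'$, not merely controls the handful of low-order derivatives compared in $(3.1)$. Verifying it means re-running the construction of the $D_i'$ --- the alternation of blow-ups and root-curve translations $y\mapsto y+k(x)$ --- and checking at each stage that the property ``leading monomial over the current sector'' is propagated; the inequalities $1\le M_i<m_i$, which wedge $D_i'$ strictly between two monomial curves of comparable type, are exactly what makes this stable. Two further points, routine but necessary, are the fractional powers of $x$ (handled by working in $w = x^{1/N}$ throughout) and the case $g_i\equiv0$, where $D_i'$ reaches down to $y=0$: there the inner-edge inequality must be replaced by the statement that $\beta_{ij}$ is exactly the order of vanishing of $f_j\circ\eta_i$ in $y$, and one should confirm this is what the construction delivers in that case.
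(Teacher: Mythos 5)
The paper does not actually prove this corollary: it is imported verbatim as Corollary 2.3 of [G4], with the remark that the proofs of Theorems 2.1 and 2.2 there yield it, so there is no internal argument to compare against. Your proposal takes the only other available route, namely trying to rebuild the statement from Theorem 3.1 plus real analyticity, and the reduction you perform is sound: the lower bound $|f_j\circ\eta_i|\gtrsim |d_{ij}|x^{\alpha_{ij}}y^{\beta_{ij}}$ is indeed the $l=m=0$ case of $(3.1)$, the term-by-term differentiation of the Puiseux series with the factors $[\mu]_a[\nu]_b$ growing only polynomially against geometric decay is routine, and your reduction of pointwise domination on $D_i'$ to the two supporting-line inequalities $\mu+M_i\nu\geq\alpha_{ij}+M_i\beta_{ij}$ and $\mu+m_i\nu\geq\alpha_{ij}+m_i\beta_{ij}$ (with the $g_i\equiv 0$ case handled by $\nu\geq\beta_{ij}$) is exactly the right way to exploit the wedge $g_i(x)<y<G_i(x)$. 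The one substantive ingredient, which you candidly flag as "the main obstacle," is that the resolution of [G4] really does arrange the entire Taylor support of $f_j\circ\eta_i$ to lie on the correct side of both supporting lines through $(\alpha_{ij},\beta_{ij})$; this is not a consequence of the statement of Theorem 3.1 (comparability of the sum does not formally rule out cancellation among individually larger terms) but is precisely the content of the construction in [G4] that the paper's citation encapsulates. So your argument is correct modulo that claim, and in substance it defers to the same source as the paper does; what your version buys is an explicit identification of which structural property of the [G4] construction is being used, plus the observation that a single choice of $D_i'$ then works for all $K$ simultaneously, whereas the paper simply quotes the ready-made Corollary 2.3. If you wanted a self-contained proof you would still have to carry out the verification you describe, tracking the supporting-line property through the blow-ups and root-curve translations $y\mapsto \pm y+k_i(x)$.
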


For our purposes, we need a slight extension of the above corollary. Namely we need it to hold for arbitrarily many $y$ derivatives. This also follows from the proofs of Theorem 2.1 and 
2.2 of [G4].

\begin {corollary} The $D_i'$ can be constructed 
so that for each $b$ there is a constant $C_b$ so that on $D_i'$ one has $|\partial_y^{b} (f_j \circ\eta_i) (x,y)| \leq C_b y^{-b}
 |(f_j\circ \eta_i)(x,y)|$ for all $b$ and all $j$.
\end{corollary}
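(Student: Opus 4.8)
The plan is to show that the pieces $D_i'$ produced by the construction behind Theorem 3.2 and Corollary 3.3 already have the stronger property, the underlying point being that once the resolution has been carried out, derivatives in the distinguished variable $y$ are essentially free. The structural input I would isolate first is the monomialization that $(3.1)$ quantifies: after possibly shrinking $a$, on a neighborhood of $\mathrm{cl}(D_i')$ one has
\[ (f_j\circ\eta_i)(x,y) \;=\; x^{\alpha_{ij}}\,y^{\beta_{ij}}\,u_{ij}(x,y), \]
where $u_{ij}$ is real analytic in $x^{1/N}$ and $y$ and satisfies $c_{ij}\le |u_{ij}(x,y)|\le C_{ij}$ there for some positive constants. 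The two-sided bound on $u_{ij}$ is just the $l=m=0$ case of $(3.1)$ with the free parameter chosen less than $1$; the fact that the monomial divides $f_j\circ\eta_i$ with a quotient that is again analytic in $(x^{1/N},y)$ is the defining output of the resolution of [G4]. The key observation is then that, since $\mathrm{cl}(D_i')$ is compact and $u_{ij}$ is analytic on a fixed open set containing it, for \emph{every} nonnegative integer $m$ there is a constant $B_m$ with $|\partial_y^m u_{ij}|\le B_m$ on $\mathrm{cl}(D_i')$, and hence $|\partial_y^m u_{ij}|\le (B_m/c_{ij})\,|u_{ij}|$ on $D_i'$. These bounds hold for all $m$ with no ``predetermined'' restriction, precisely because they are read off from a single analytic function on a fixed neighborhood rather than from the estimate $(3.1)$, which only controls finitely many derivatives.

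Granting this, the estimate is an application of the Leibniz rule together with two elementary facts. Abbreviating $\alpha=\alpha_{ij}$, $\beta=\beta_{ij}$, $u=u_{ij}$, I would write
\[ \partial_y^b(f_j\circ\eta_i) \;=\; x^{\alpha}\sum_{m=0}^{b}\binom{b}{m}\bigl(\partial_y^m y^{\beta}\bigr)\bigl(\partial_y^{b-m}u\bigr) \;=\; x^{\alpha}\sum_{m=0}^{\min(b,\beta)}\binom{b}{m}\frac{\beta!}{(\beta-m)!}\,y^{\beta-m}\,\partial_y^{b-m}u, \]
the point of the second equality being the first elementary fact, that $\beta$ is a nonnegative \emph{integer}, so $\partial_y^m y^{\beta}=0$ for $m>\beta$ and the sum is finite and stops at $\min(b,\beta)$. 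The second fact is that $y$ is bounded above on $D_i'$, by some $A$ (since $y<G_i(x)$ and $G_i$ is continuous on $[0,a]$); hence for $0\le m\le\min(b,\beta)$ we have $b-m\ge 0$ and $y^{\beta-m}=y^{\beta-b}\,y^{b-m}\le \max(1,A)^{b}\,y^{\beta-b}$. Substituting the bounds $|\partial_y^{b-m}u|\le (B_{b-m}/c_{ij})|u|$ and summing the finitely many terms gives
\[ \bigl|\partial_y^b(f_j\circ\eta_i)(x,y)\bigr| \;\le\; C_b\, x^{\alpha}\,y^{\beta-b}\,|u(x,y)| \;=\; C_b\, y^{-b}\,\bigl|(f_j\circ\eta_i)(x,y)\bigr| \]
on $D_i'$, with $C_b$ depending only on $b$ and the finitely many data $\alpha_{ij},\beta_{ij},c_{ij},B_m,A$. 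Taking the maximum of the constants over the finitely many indices $i,j$ (and over the finitely many triangles coming from the reduction preceding Theorem 3.2) yields the corollary. Note that for $b\le\beta_{ij}$ the bound is already contained in $(3.1)$; only $b>\beta_{ij}$ is genuinely new, and the computation above handles all $b$ in one stroke.

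The step I expect to require the most care is the first one: stating precisely that the resolution of [G4] outputs the honest factorization $f_j\circ\eta_i=x^{\alpha_{ij}}y^{\beta_{ij}}u_{ij}$ with $u_{ij}$ analytic in $(x^{1/N},y)$ and nonvanishing on a neighborhood of $\mathrm{cl}(D_i')$, rather than only the finitely-many-derivative estimate $(3.1)$. This is standard for Hironaka-type monomializations and is implicit in the proof of Theorem 3.2, but it is worth being explicit because $\mathrm{cl}(D_i')$ meets the coordinate axes — it meets $\{x=0\}$ at the origin and meets $\{y=0\}$ in a segment when $g_i\equiv 0$ — so the divisibility of $f_j\circ\eta_i$ by $x^{\alpha_{ij}}y^{\beta_{ij}}$ with analytic quotient (equivalently, the exactness of the vanishing order $\beta_{ij}$ along $\{y=0\}$ and the controlled behavior as $x\to 0$) is genuinely a statement about the resolution and not a formal consequence of $(3.1)$. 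If one wished to bypass it, the alternative would be to use that for each fixed small $x>0$ the function $y\mapsto (f_j\circ\eta_i)(x,y)$ is analytic on a fixed complex disc $\{|y|<\rho\}$ and apply a Cauchy estimate on the circle $\{|y-y_0|=y_0/2\}$; but this then requires $\sup_{|y-y_0|\le y_0/2}|(f_j\circ\eta_i)(x,y)|\lesssim x^{\alpha_{ij}}y_0^{\beta_{ij}}$, which reduces to controlling the Taylor coefficients of $f_j\circ\eta_i$ in $y$ by $x^{\alpha_{ij}}$ — i.e. to the same structural input.
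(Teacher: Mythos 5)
Your Leibniz computation and the bookkeeping at the end are fine, but the structural input they rest on --- that $f_j\circ\eta_i = x^{\alpha_{ij}}y^{\beta_{ij}}u_{ij}$ with $u_{ij}$ real analytic in $(x^{1/N},y)$ and nonvanishing on a \emph{neighborhood} of $\mathrm{cl}(D_i')$, hence with $|\partial_y^m u_{ij}|\le B_m$ uniformly --- is false in general, and this is exactly the step you flagged. The exponents $(\alpha_{ij},\beta_{ij})$ of $(3.1)$ are comparability exponents on a thin curved region, not exponents of an honest unit factorization in the $(x,y)$ coordinates of $D_i'$. Concretely, take $f(x,y)=y^2+x^2$ and the piece $D_i'=\{(x,y): 0<x<a,\ 0<y<x/N\}$ (here $\eta_i$ is the identity, $g_i\equiv 0$, $G_i(x)=x/N$), on which $f\sim x^2$, so $\alpha_{ij}=2$, $\beta_{ij}=0$. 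The quotient $u=f/x^2=1+y^2/x^2$ is bounded above and below on $D_i'$, but $\partial_y u=2y/x^2$ is unbounded there (take $y=x/(2N)$ and let $x\to 0$), and $u$ has no continuous extension to any neighborhood of the origin, which lies in $\mathrm{cl}(D_i')$; so no compactness-plus-analyticity argument can give $|\partial_y^m u|\le B_m$. What is true in general is only a bound of the form $|\partial_y^m u|\le C_m y^{-m}|u|$, and feeding that into your Leibniz expansion just reproduces the statement being proved, so the argument becomes circular. Your fallback via Cauchy estimates on $\{|y-y_0|\le y_0/2\}$ has the same issue: it needs $|f_j\circ\eta_i|\le Cx^{\alpha_{ij}}y_0^{\beta_{ij}}$ on a complex disc, which again does not follow from the real estimates $(3.1)$ on $D_i'$ alone.

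For contrast, the paper never factors $f_j\circ\eta_i$ itself. It takes $(\alpha_{ij},\beta_{ij})$ to be the \emph{upper vertex} of the Newton polygon $N(f_j\circ\eta_i)$ (not the exponents of $(3.1)$; in the example above that vertex is $(0,2)$). Since every exponent pair in the support has $x$-exponent at least $\alpha_{ij}$, after differentiating $\beta_{ij}$ times in $y$ one genuinely has $\partial_y^{\beta_{ij}}(f_j\circ\eta_i)=c\,x^{\alpha_{ij}}a(x^{1/K},y)$ with $a$ analytic and nonvanishing, so $\partial_y^{\beta_{ij}}(f_j\circ\eta_i)\sim x^{\alpha_{ij}}$ and all further $y$-derivatives are $O(x^{\alpha_{ij}})$. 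Then for $b\ge\beta_{ij}$ it bounds $|\partial_y^{b}(f_j\circ\eta_i)|\le Cx^{\alpha_{ij}}\le C_b y^{-(b-\beta_{ij})}|\partial_y^{\beta_{ij}}(f_j\circ\eta_i)|$ using only that $y$ is bounded, and chains this with Corollary 3.1.1, which supplies $|\partial_y^{\beta_{ij}}(f_j\circ\eta_i)|\le Cy^{-\beta_{ij}}|f_j\circ\eta_i|$ and also covers $b\le\beta_{ij}$. The honest ``monomial times nonvanishing analytic function'' statement only becomes available after enough $y$-derivatives have removed the $y$-dependence at the top vertex; if you repair your argument along those lines it becomes the paper's proof.
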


The basic idea behind why Corollary 3.1.2 holds is as follows. When $f_j \circ \eta_i \sim x^{\alpha_{ij}}y^{\beta_{ij}}$ on $D_i'$, the Newton polygon $N(f_j \circ \eta_i)$ has a vertex at
$(\alpha_{ij}, \beta_{ij})$ and on $D_i'$, $x^{\alpha_{ij}}y^{\beta_{ij}}$ dominates any other nonzero term $c_{\alpha,\beta} x^{\alpha}y^{\beta}$ of the Taylor expansion of $f_j \circ \eta_i $ due to the fact that 
 $(\alpha,\beta)$ is  contained in this Newton polygon.
 Therefore for any $b$, $x^{\alpha_{ij}}y^{\beta_{ij}}$ will, up to some constant depending on $b$, dominate any other nonzero term of the Taylor expansion of 
$y^b \partial_y^b (f_j \circ \eta_i)$ since the Newton polygon of $y^b \partial_y^b (f_j \circ \eta_i)$ is a subset of that of $f_j \circ \eta_i$.

The resolution of singularities theorem above is compatible with smooth functions in the following sense, as follows directly from the constructions 
of Theorems 2.1 and 2.2 of [G4].

\begin{theorem} Suppose we are in the setting of Theorem 3.1, and suppose $\psi(x,y)$ is a smooth bump function on $\R^2$. Then on $\R^2 - 
\{(0,0)\}$ we can write
$\psi(x,y) =\sum_{ijk} \psi_{ijk}(x,y)$ such that the following hold for some constant $C$ independent of $\psi$.
\begin{enumerate}
\item Each $\psi_{ijk} \circ \eta_i(x,y)$ is a smooth function supported on $[C^{-1}2^{-j}, C 2^{-j}] \times [C^{-1}2^{-k}, C 2^{-k}]$.
\item For each nonnegative integer $a$ and $b$ there is a constant $D_{a,b,\psi}$ such that for  each $(i,j,k)$ one has
\[ |\partial_x^a\partial_y^b (\psi_{ijk} \circ \eta_i(x,y))| \leq D_{a,b,\psi} 2^{aj + bk}\tag{3.2}\]
\item The inequality $(3.1)$ as well as Corollaries 3.1.1 and 3.1.2 hold on the support of  $\psi_{ijk} \circ \eta_i(x,y)$
\end{enumerate}
\end{theorem}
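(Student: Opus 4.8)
The plan is to re-run the resolution procedure of Theorems 2.1 and 2.2 of [G4], carrying $\psi$ along through the construction, and then to perform one final dyadic decomposition inside each resolved region $D_i'$. Several reductions come first. Since $\psi$ is compactly supported and every assertion in the conclusion that could fail does so only near the origin — away from a fixed ball the curves $y=k_i(x)$ do not pinch together, so an ordinary smooth partition of unity subordinate to the $D_i$ followed by a dyadic splitting into only finitely many boxes suffices, using that $(3.1)$ and Corollaries 3.1.1 and 3.1.2 already hold on all of $D_i'$ — it is enough to take $\psi$ supported near the origin. An angular partition of unity, whose cutoffs we allow to overlap slightly across the dividing lines so that those measure-zero sets are still covered (exactly as the construction of [G4] arranges), then reduces us to $\psi$ supported near the origin in one of the reflected triangles $T_b$.

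The core of the argument is to produce, inside $T_b$, a smooth partition of unity $\{\theta_i\}$ adapted to the regions $D_i$: one wants $\sum_i\theta_i\equiv 1$ on a punctured neighborhood of the origin in $F_a$, with $\mathrm{supp}\,\theta_i$ contained in $D_i$ (with enough room that the estimates of Theorem 3.1 remain valid there), and with the $D_i$, or suitable slight enlargements of them, overlapping enough that every common boundary curve lies in the interior of one of these regions. Because the bounding curves $y=k_i(x)=l_ix^{s_i}+\cdots$ all pass through the origin with $s_i\ge 1$, the $\theta_i$ cannot be fixed smooth functions; each must be a smooth function of a rescaled ``normalized height'' variable, along the lines of $(y-g_i(x))/(G_i(x)-g_i(x))$ in the resolved coordinates. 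This is exactly the partition of unity already implicit in the iterative Newton-polygon blow-up construction of [G4]: at each stage the plane is cut into sectors by smooth cutoffs applied to ratios of powers of the current coordinates, and after the blow-up those become product-type cutoffs; carrying $\psi$ through the algorithm amounts to multiplying it by these nested cutoffs. The main obstacle of the proof is precisely to verify that the cutoffs from successive stages compose into a partition of unity with the stated support and overlap properties, and this is the step where one must inspect, rather than merely cite, the construction of Theorems 2.1 and 2.2 of [G4].

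Granting such a family $\{\theta_i\}$, set $\Psi_i:=(\psi\,\theta_i)\circ\eta_i^{-1}$, a smooth function on $D_i'$ supported away from the curved parts of $\partial D_i'$, and decompose $\Psi_i=\sum_{j,k}\chi(2^jx)\chi(2^ky)\,\Psi_i$, where $\chi$ is supported in $[1/2,2]$ with $\sum_n\chi(2^nt)\equiv 1$ for $t>0$; define $\psi_{ijk}$ by $\psi_{ijk}\circ\eta_i:=\chi(2^j\cdot)\chi(2^k\cdot)\Psi_i$. Summing over $i,j,k$ and over the triangles reproduces $\psi$ on $\R^2\setminus\{(0,0)\}$, since $\sum_i\psi\theta_i=\psi$ and the dyadic sum equals $1$ at every point of $\mathrm{supp}\,\Psi_i$ off the origin (the overlapping of the $D_i$ ensures each common boundary curve is covered from the side on which it is interior). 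Property (1) holds by construction with $C=2$; property (3) holds because $\mathrm{supp}(\psi_{ijk}\circ\eta_i)\subset\mathrm{supp}\,\Psi_i\subset D_i'$, where $(3.1)$ and Corollaries 3.1.1 and 3.1.2 are already known.

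Property (2) is then a chain-rule computation on the box $\{x\sim 2^{-j},\,y\sim 2^{-k}\}$. The cutoff $\chi(2^jx)\chi(2^ky)$ contributes $2^{aj+bk}$ under $\partial_x^a\partial_y^b$. The factor $\theta_i\circ\eta_i^{-1}$, being a fixed-profile function of the normalized-height variable, has $x$-derivatives of size $\lesssim x^{-1}\lesssim 2^j$ and $y$-derivatives of size $\lesssim(G_i(x)-g_i(x))^{-1}\lesssim 2^k$ there, since on the box $2^{-k}\lesssim y\le G_i(x)\sim G_i(x)-g_i(x)$. Finally $\Psi_i$, which written out is $\psi$ composed with $(x,y)\mapsto(x,\pm(y-k_i(x)))$, has bounded $y$-derivatives (the inner map is affine in $y$) and $x$-derivatives of size $\lesssim 2^{(a-1)j}\le 2^{aj}$, because an $a$-fold $x$-derivative produces terms involving $k_i^{(m)}(x)\sim x^{s_i-m}$, which on $x\sim 2^{-j}$ are at worst $2^{(m-1)j}$ thanks to $s_i\ge 1$. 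Multiplying these estimates yields $|\partial_x^a\partial_y^b(\psi_{ijk}\circ\eta_i)|\le D_{a,b,\psi}2^{aj+bk}$ with $D_{a,b,\psi}$ depending on only finitely many derivatives of $\psi$, which is $(3.2)$.
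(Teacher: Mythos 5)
Your proposal follows essentially the route the paper intends, but note that the paper supplies no proof of Theorem 3.2 at all: it is stated as following ``directly from the constructions of Theorems 2.1 and 2.2 of [G4],'' which is exactly the citation you fall back on at the decisive point. Your elaboration --- reduce to $\psi$ supported near the origin in one triangle, carry $\psi$ through the resolution algorithm by multiplying in the smooth cutoffs the algorithm itself uses, dyadically decompose in the resolved coordinates, and obtain $(3.2)$ by the chain rule using $s_i \geq 1$ while properties (1) and (3) come for free from the supports --- is a faithful account of what that citation hides, and your chain-rule estimates are correct. Two points deserve care. First, with the paper's convention $\eta_i : D_i' \rightarrow D_i$ you want $\Psi_i = (\psi\,\theta_i)\circ \eta_i$ rather than $\circ\,\eta_i^{-1}$; your later formula $\psi(x, \pm(y - k_i(x)))$ shows what you mean, so this is only notational. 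Second, a cutoff given by a fixed profile in the normalized height $(y - g_i(x))/(G_i(x) - g_i(x))$ behaves correctly only at the upper boundary: when $g_i \not\equiv 0$ the transition at the lower boundary must take place at the scale $y \sim x^{m_i}$, i.e.\ via a cutoff in $y/x^{m_i}$, since a normalized-height profile transitions at height comparable to $G_i(x)$ and would leave the strip $g_i(x) < y \ll G_i(x)$ covered by neither $\theta_i$ nor its neighbor. Your subsequent description of the cutoffs as functions of ratios of powers of the current coordinates is the correct one and yields bounds of size $x^{-a}y^{-b} \sim 2^{aj+bk}$ on each dyadic box, consistent with $(3.2)$. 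With that adjustment, the only unverified step is the one you flag yourself --- that the nested cutoffs of [G4] assemble into a partition of unity supported where $(3.1)$ and Corollaries 3.1.1 and 3.1.2 persist --- and that is precisely the content the paper, too, leaves entirely to [G4].
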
 

\subsection{Adjusting the algorithm}

We will need a variant of the above theorems, which in the context of Theorem 1.1
 we will apply to $S(x,y)$, $\partial_y^2 S(x,y)$, $\partial_y(S(x,y) - S(0,y))$, and the 
Hessian determinant of $S(x,y)$, which we are calling $H(x,y)$. The purpose of resolving the singularities of $\partial_y(S(x,y) - S(0,y))$ is to ensure
$\partial_{xy} S(x,y)$ is monomialized in the new coordinates in the sense of part 2 of Theorem 3.1. We don't resolve the singularities of $\partial_{xy} S(x,y)$ itself since this function
does not behave as well as  $\partial_y(S(x,y) - S(0,y))$ under the types of coordinate changes we are doing in this paper, and resolving the singularities of  $\partial_y(S(x,y) - S(0,y))$ 
will ensure the singularities of $\partial_{xy} S(x,y)$ are resolved. 

The variant proceeds as follows.
After doing the rotations and reflections preceding Theorem 3.1, we do some further linear maps. Namely, if $o$ denotes the order of the zero of 
(the rotated/reflected) $S(x,y)$
at $(0,0)$, let $S_0(x,y) = \sum_{\alpha + \beta = o} S_{\alpha, \beta}x^{\alpha}y^{\beta}$, the sum of the terms of $S$'s Taylor expansion of lowest
order. We divide the domain triangle $\{(x,y) \in \R^2: 0 < x < a,\,0 < y < bx\}$ into smaller triangles $U_l$, then do linear maps of the
form $L(x,y) = (x, y + m_lx)$, $m_l \in \R$,  to place the lower boundaries of the $U_l$ on the $x$ axis. We do this in such a way such that if $U_l'$ denotes the domain $U_l$ in the
new coordinates, if $U_l'$ is written as $\{(x,y) \in \R^2: 0 < x < a,\,0 < y < c_lx\}$, then the function $S_0(x, y + m_lx)$ is such that
$S_0(1, y + m_l)$ either has no zeroes on $[0,c_l]$, or has a single zero, located at $y = 0$.

Let $R_l(x,y)$ denote $S(x, y + m_lx)$, viewed as a function on $U_l'$. We next look at the Newton polygon $N(R_l)$. Because of the form of the
domains $U_l'$, we will only be interested in the edges of $N(R_l)$ of slope $-{1 \over m}$ for some $m \geq 1$. We first have the following lemma.

\begin{lemma} Suppose $e$ is an edge of $N(R_l)$ of slope $-{1 \over m}$ for $m \geq 1$,  and  suppose $y_0 \geq 0$ is such that the 
associated polynomial $(R_l)_e(1,y)$ (see Definition 1.2) satisfies $ (R_l)_e(1,y_0) \neq 0$ and $\partial_y (R_l)_e(1,y_0) = 0$. Then there is a small wedge $W_{y_0} = \{(x,y) \in \R^2: 0 < x < a,\,c x^m < y < c' x^m \}$, where $c < y_0 < c'$,  on which for some $\alpha > 0$ 
(depending on $y$), we have $|R_l(x,y)| \sim x^{\alpha}$ and $|\partial_x R_l(x,y)| \sim x^{\alpha - 1}$. If $y_0 = 0$ we also have 
$|\partial_{xx}R_l(x,y)| \sim x^{\alpha - 2}$.
\end{lemma}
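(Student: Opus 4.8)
The plan is to pass to the weighted (quasi-homogeneous) blow-up adapted to the edge $e$, reduce all three estimates to nonvanishing statements about the edge polynomial $(R_l)_e$, and then use Euler's identity for quasi-homogeneous functions to convert the hypothesis $\partial_y(R_l)_e(1,y_0)=0$ into the nonvanishing of $\partial_x(R_l)_e(1,y_0)$ that the $\partial_x$ estimate requires.

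First I would record the geometry of $e$. Write the line through $e$ as $\{a+mb=\mu\}$. Since $e$ is an edge of $N(R_l)$ and $N(R_l)$ lies in the closed half-plane $\{a+mb\geq\mu\}$, every nonzero monomial $(R_l)_{a,b}x^ay^b$ of $R_l=S(x,y+m_lx)$ has $a+mb\geq\mu$, with equality exactly for the monomials lying on $e$. Because $R_l$ is an ordinary convergent power series ($a,b$ nonnegative integers), writing $-1/m$ in lowest terms the values $a+mb$ lie in a discrete set $\mu=\nu_0<\nu_1<\cdots$, and $\mu>0$ since $R_l(0,0)=S(0,0)=0$ puts $(0,0)\notin N(R_l)$. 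In the case $y_0=0$, the hypothesis $(R_l)_e(1,0)\neq0$ forces $e$ to have an endpoint $(A,0)$ on the $x$-axis with $(R_l)_{A,0}=(R_l)_e(1,0)\neq0$; since $a+mb=\mu$ throughout $e$ we get $A=\mu$, and $A\geq2$ because by $(1.2)$ the function $R_l(x,0)=S(x,m_lx)$, whose order of vanishing at $x=0$ equals $A$, vanishes to order at least $2$.

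Next I would introduce $\pi(x,u)=(x,x^mu)$ and set $\Phi(x,u)=x^{-\mu}R_l(\pi(x,u))$. Grouping the monomials of $R_l$ by the value of $a+mb$ gives $R_l(\pi(x,u))=\sum_{i\geq0}x^{\nu_i}P_i(u)$ with each $P_i$ a polynomial in $u$ and $P_0(u)=(R_l)_e(1,u)$, so $\Phi(x,u)=(R_l)_e(1,u)+\sum_{i\geq1}x^{\nu_i-\mu}P_i(u)$. Using convergence of the power series for $R_l$ on a polydisc together with the gap $\nu_1-\mu>0$, the tail is $O(x^{\nu_1-\mu})$ uniformly for $u$ in any fixed compact set and $0<x<a'$ with $a'$ small; hence $\Phi$ extends continuously to $x=0$ with $\Phi(0,u)=(R_l)_e(1,u)$. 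Since $(R_l)_e(1,\cdot)$ is a polynomial that is nonzero at $y_0$, choose $0\leq c<y_0<c'$ (with $c=0$ when $y_0=0$) on which it is bounded away from $0$; after shrinking $a$ so that $|\Phi|$ stays bounded above and below on the wedge $W_{y_0}=\{0<x<a,\ cx^m<y<c'x^m\}$, we obtain $|R_l(x,y)|=x^\mu|\Phi(x,u)|\sim x^\mu$ there. One may therefore take $\alpha=\mu>0$, constant on the wedge.

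For the derivative estimates, differentiating the power series termwise gives $(\partial_xR_l)(\pi(x,u))=x^{\mu-1}\Psi(x,u)$ and $(\partial_{xx}R_l)(\pi(x,u))=x^{\mu-2}\Theta(x,u)$, where the identical tail estimate (the extra polynomial factors $a$, resp. $a(a-1)$, in the coefficients being harmless) makes $\Psi,\Theta$ continuous up to $x=0$ with $\Psi(0,u)=\partial_x(R_l)_e(1,u)$ and $\Theta(0,u)=\partial_{xx}(R_l)_e(1,u)$. Now apply Euler's identity to the quasi-homogeneous polynomial $(R_l)_e$, homogeneous of weighted degree $\mu$ for the weights $(1,m)$: $x\,\partial_x(R_l)_e+m\,y\,\partial_y(R_l)_e=\mu(R_l)_e$, so
\[\partial_x(R_l)_e(1,y_0)=\mu(R_l)_e(1,y_0)-m\,y_0\,\partial_y(R_l)_e(1,y_0)=\mu(R_l)_e(1,y_0)\neq0\]
by the hypothesis $\partial_y(R_l)_e(1,y_0)=0$ together with $\mu>0$. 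By continuity $\Psi(0,\cdot)$ is bounded away from $0$ near $y_0$, so after further shrinking $[c,c']$ and $a$ we get $|\partial_xR_l(x,y)|\sim x^{\mu-1}=x^{\alpha-1}$ on $W_{y_0}$. When $y_0=0$ we have in addition $\Theta(0,0)=\partial_{xx}(R_l)_e(1,0)=A(A-1)(R_l)_{A,0}=\mu(\mu-1)(R_l)_{A,0}\neq0$ since $\mu\geq2$, and the same argument yields $|\partial_{xx}R_l(x,y)|\sim x^{\mu-2}=x^{\alpha-2}$ on $W_0$. The conceptual core is this single application of Euler's identity, which is exactly what makes $\partial_y(R_l)_e(1,y_0)=0$ the right hypothesis; the only real work, and the place where care is needed, is the bookkeeping in the blow-up — deducing the uniform-in-$u$ continuity of $\Phi,\Psi,\Theta$ up to $x=0$ from convergence of the power series, and checking that the finitely many smallness requirements on $a$ and on the width of $[c,c']$ are mutually compatible.
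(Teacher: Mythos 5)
Your proposal is correct and follows essentially the same route as the paper: approximate $R_l$ by the edge polynomial $(R_l)_e$ on the wedge $y \sim y_0 x^m$, and use the mixed homogeneity of $(R_l)_e$ together with $\partial_y(R_l)_e(1,y_0)=0$ to conclude that $\partial_x(R_l)_e$ is nonvanishing along $y=y_0x^m$, which yields $|R_l|\sim x^{\alpha}$, $|\partial_x R_l|\sim x^{\alpha-1}$, and, when $y_0=0$, $|\partial_{xx}R_l|\sim x^{\alpha-2}$. Your explicit blow-up $\pi(x,u)=(x,x^mu)$, the Euler identity, and the verification that $\mu=A\ge 2$ via $(1.2)$ are simply careful elaborations of the paper's ``mixed homogeneity'' argument.
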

\begin{proof}
First note that up to small error terms,
$R_l(x,y)$ equals the mixed homogeneous polynomial $(R_l)_e(x,y)$ on such a $W_{y_0}$. Since $ (R_l)_e(1,y_0) \neq 0$, this means that
$|R_l(x,y)| \sim |(R_l)_e(x,y)| \sim x^{\alpha}$ for some $\alpha$ on such a $W_{y_0}$.  Since $ (R_l)_e(1,y_0) \neq 0$, the mixed homogeneity
combined with the fact that the $y$ derivative vanishes at $y_0$ ensures that the $x$ derivative $|\partial_x (R_l)_e(x,y)|$ will not be vanishing
on the curve $y = y_0 x^m$. Thus for a small enough wedge $W_{y_0}$ we have $|\partial_x (R_l)_e(x,y)| 
\sim x^{\alpha - 1}$ on $W_{y_0}$, and therefore $|\partial_x R_l(x,y)| \sim x^{\alpha - 1}$ as well close enough to the origin. In the case that $y_0 = 0$, one can argue directly; since  $(R_l)_e(x,0) \neq 0$ one has $\partial_x(R_l)_e(x,0) \sim x^{\alpha - 1}$ and $\partial_{xx} (R_l)_e(x,0) \sim x^{\alpha - 2}$. Thus if $W_{y_0}$ is 
sufficiently small, on $W_{y_0}$ one similarly has $|\partial_xR_l(x,y)| \sim x^{\alpha - 1}$ and $|\partial_{xx}R_l(x,y)| \sim x^{\alpha - 2}$.
\end{proof}

In what follows, we will always assume that for $y_0 \neq 0$, the wedge $W_{y_0}$ is chosen small enough so that $0 \notin [c,c']$.

The variant of Theorems 3.1 and 3.2 we will need is as follows. For any wedge $W_{y_0}$ as in Lemma 3.3 we reverse the roles of the
$x$ and $y$ variables and consider the function $Q_l(x,y) = R_l(y,x)$ on the reflected set $W_{y_0}'$. If $y_0 \neq 0$, the 
set $W_{y_0}'$ is a wedge of the form $ \{(x,y) \in \R^2: 0 < y < a,\,c' x^{1 \over m} < y < c x^{1 \over m} \}$ on which we have 
$|Q_l(x,y)| \sim x^{{\alpha \over m}}$ and $|\partial_y Q_l(x,y)| \sim  x^{{\alpha \over m} - {1 \over m}}$. If $y_0 =  0$, then 
$W_{y_0}'$ is a wedge of the form $ \{(x,y) \in \R^2: 0 < y < a, y > c x^{1 \over m} \}$ on which we have $|Q_l(x,y)| \sim y^{\alpha}$, 
$|\partial_y Q_l(x,y)| \sim  y^{\alpha -1 }$, and $|\partial_{yy} Q_l(x,y)| \sim  y^{\alpha -2 }$. Although these forms are different from those in Theorems 3.1 and 3.2, one can still apply the resolution of singularities algorithm on such wedges. So we apply the algorithm to $Q_l(x,y)$, $\partial_y^2 Q_l(x,y)$,
 $\partial_y(Q_l(x,y) - Q_l(0,y))$, and the Hessian determinant of $Q_l(x,y)$. Then Theorem 3.1 and Corollaries 3.1.1 and 3.1.2 will all still hold.

On the portions of the domains $U_l'$ that are not part of any $W_{y_0}$, we simply apply the original resolution of singularities algorithm to
$R_l(x,y)$, $\partial_y^2 R_l(x,y)$,
 $\partial_y(R_l(x,y) - R_l(0,y))$, and the Hessian determinant of $R_l(x,y)$, and 
Theorem 3.1 and Corollaries 3.1.1 and 3.1.2 will hold as usual.

The reason we make the above modifications of the resolution process is that we want to be in one of the two situations in Theorem 3.4 on every $D_i'$. If we had not done the above
reversals of the roles of the $x$ and $y$ variables, there could be situations where $ (R_l)_e(1,y_0) \neq 0$ and $\partial_y (R_l)_e(1,y_0) = 0$ for which neither part of the lemma is
 satisfied on $D_i'$  coming from the associated $W_{y_0}$.

In what follows we let $\eta_i$ denote any of the coordinate change maps occurring in the resolution of singularities process above, whether or not it
derives from a $W_{y_0}$. In other words, the original (rotated/reflected) $S(x,y)$ on the triangle $\{(x,y) \in \R^2: 0 < x < a,\,0 < y < bx\}$
 becomes $S \circ \eta_i(x,y)$ in the final coordinates. We similarly let $D_i'$ denote any of the final domains occurring, so that in this notation
  Theorem 3.1 and Corollaries 3.1.1 and 3.1.2 hold.
As for Theorem 3.2, since the subdivisions forming the $W_{y_0}$ are done at the initial stages
of the resolution of singularities procedure, they do not interfere with the result and Theorem 3.2 still holds in the current setting.

\subsection{Some useful consequences}

In the case that $S \circ \eta_i(x,y) \sim x^{\alpha_i}$ on $D_i'$ for some $\alpha_i$ (i.e. $\beta_{ij} = 0$ for $S \circ \eta_i$ in Theorem 3.1), we will need some 
second derivative estimates on $S \circ \eta_i(x,y)$ for our future results. These will be given by the next two theorems. The first is as follows.

\begin{theorem} Let $S_i(x,y)$ denote $S \circ \eta_i(x,y)$, and suppose $S_i(x,y) \sim x^{\alpha_i}$ on $D_i'$. Let $o$
 denote the order of the zero of $S(x,y)$ at the origin. Then for each
$i$, at least one of the following two situations holds.
\begin{enumerate}

\item There exists a constant $C$ and $g_i, s_i  > 0$ such that for each $j$ and $k$, one has $|{\partial^2 S_i \over\partial x\partial y} (x,y)| > C^{-1}x^{s_i - g_i - 1}$ on the support of $\psi_{ijk}$, and
such that $|S_i(x,y)| < Cx^{s_i}$ on a product of intervals $R_1 \times R_2  \subset [2^{-j}, 2^{-j+1}]  \times [C^{-1} 2^{-jg_i}, C2^{-jg_i}]$,
where $|R_1| > C^{-1} 2^{-j}$, $|R_2| > C^{-1}2^{-jg_i}$, and $k \geq jg_i$.
\item The lowest edge of the Newton polygon $N(S_i)$ joins $(\alpha_i, 0)$ to $(\alpha_i',\beta_i')$ for some $\beta_i' \geq 2$. If the slope 
$-{1 \over m}$ of this edge satisfies $m < 2$ then $\beta_i' \leq {o \over 2}$. If $m \geq 2$ then $\beta_i' \leq o$. 
\end{enumerate}
\end{theorem}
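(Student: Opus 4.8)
The plan is to argue via the Newton polygon $N(S_i)$ of $S_i=S\circ\eta_i$ in the resolved coordinates. The crucial structural input is that the preliminary rotation performed before running the resolution algorithm forces the coefficient of $y^o$ in $S_i$ to be nonzero: that rotation makes the $y^o$-coefficient of the rotated $S$ nonzero, every subsequent coordinate change is of the form $(x,y)\mapsto(x,\pm y+k_i(x))$ with $k_i(0)=0$ and so fixes $S_i(0,y)$, and all of these coordinate changes also preserve the order of vanishing at the origin, so $S_i$ still has order $o$. Hence $(0,o)$ starts the vertical ray of $N(S_i)$, $(\alpha_i,0)$ starts the horizontal ray because $S_i\sim x^{\alpha_i}$ on $D_i'$, and $N(S_i)\subseteq\{a+b\ge o\}$. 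Writing the lowest edge $e$ of $N(S_i)$ from $(\alpha_i,0)$ to the next vertex $(\alpha_i',\beta_i')$, with slope $-1/m$, every intermediate vertex of $N(S_i)$ has height between $0$ and $o$, so automatically $\beta_i'\le o$, and also $\alpha_i'+\beta_i'\ge o$.

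First I would handle the cases that land in situation $2$. Suppose $\beta_i'\ge 2$. If $m\ge 2$ the required bound $\beta_i'\le o$ is the one just noted, so situation $2$ holds. If $1\le m<2$, I would try for $\beta_i'\le o/2$ by combining the supporting-line inequality $N(S_i)\subseteq\{a+mb\ge\alpha_i\}$ at $e$, the inequality $\alpha_i'+\beta_i'\ge o$, and the fact that the degree-$o$ part of $S_i$ lives on the line $a+b=o$ and therefore, being a point of $N(S_i)$, at height at least $(\alpha_i-o)/(m-1)$ and at most $o$; for $m>1$ these pin $\beta_i'$ down, and whenever they yield $\beta_i'\le o/2$ we are in situation $2$. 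The remaining cases — $\beta_i'=1$, the borderline $m=1$, and $\beta_i'\ge 2$ with $1<m<2$ where the previous step does not give $\beta_i'\le o/2$ — are where I would establish situation $1$.

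For situation $1$ the key is that resolving $\partial_y\big(S(x,y)-S(0,y)\big)$ monomializes $\partial_{xy}S_i$ on $D_i'$: write $\partial_{xy}S_i\sim d_i x^{p_i}y^{q_i}$ there. In the remaining cases one checks that $q_i=0$, i.e. $\partial_{xy}S_i\sim d_i x^{p_i}$ is a pure power of $x$ with $p_i=\gamma_i-1$, where $\gamma_i$ is the least exponent $a$ for which $S_i$ has a nonzero $x^ay$-term; the failure of situation $2$ is precisely what places the lattice point $(\gamma_i,1)$ onto the lowest edge and hence gives $q_i=0$. One then takes $s_i$ with $s_i-g_i-1=p_i$, i.e. $s_i=\gamma_i+g_i$, and chooses $g_i$ inside the range of reciprocal slopes compatible with the shape $\{g_i^{\mathrm{dom}}(x)<y<G_i^{\mathrm{dom}}(x)\}$ of $D_i'$ so that $s_i=\gamma_i+g_i\le\alpha_i$. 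Part (a) then holds on the support of every $\psi_{ijk}$, since $|\partial_{xy}S_i|\sim x^{p_i}$ there independently of $k$; part (b) holds because on the slice $y\sim 2^{-jg_i}$ the edge polynomial of $e$ dominates $S_i$ and is $\lesssim x^{\alpha_i}=x^{s_i}$, so $|S_i|<Cx^{s_i}$ on any rectangle of the stated dimensions there.

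The step I expect to be the main obstacle is this last one: showing that in \emph{every} configuration where situation $2$ fails one can produce $g_i>0$ enjoying both properties, and that the $\psi_{ijk}$ with $k\ge jg_i$ are exactly those on which the pure-power-of-$x$ bound for $\partial_{xy}S_i$ is valid. The delicate points are to rule out the scenario in which $\partial_{xy}S_i$ genuinely depends on $y$ while $D_i'$ still reaches the $x$-axis — a monomialized $\partial_{xy}S_i$ vanishing on $y=0$ would force $S_i$ to have no $x^ay$-terms, which is incompatible with the presence of a lowest edge of the relevant height — and to check that the choice $s_i=\gamma_i+g_i$ can be made $\le\alpha_i$ while keeping $y\sim 2^{-jg_i}$ inside the region where $S_i$ is controlled; this last compatibility is exactly what the failure of the $o/2$-bound, or the case $\beta_i'\le 1$, is there to supply.
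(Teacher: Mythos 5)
Your approach tries to read the dichotomy off the final Newton polygon $N(S_i)$ together with the monomialization of $\partial_{xy}S_i$, but the key step --- that in every configuration where the bounds of situation 2 fail one can verify situation 1 --- is precisely where the argument breaks, and the auxiliary claim you invoke to rule out the bad scenario is false. Take $S(x,y)=y^3+x^5$ (so $o=3$) and the region $\{0<y<\epsilon x^{5/3}\}$, on which $S\sim x^5$. If the algorithm were allowed to keep this region in these coordinates, as your argument implicitly permits, then $S_i\sim x^{\alpha_i}$ with $\alpha_i=5$, the lowest edge of $N(S_i)$ joins $(5,0)$ to $(0,3)$, so $\beta_i'=3>o/2$ while $m=5/3<2$, and at the same time $\partial_{xy}S_i\equiv 0$, so situation 1 is impossible: both alternatives fail. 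In particular your assertion that a monomialized $\partial_{xy}S_i$ vanishing on $y=0$ would force $S_i$ to have no $x^ay$-terms, ``incompatible with the presence of a lowest edge of the relevant height,'' is wrong: $y^3+x^5$ has no $x^ay$-terms and still has a lowest edge of height $3\ge 2$. The theorem is therefore not a consequence of the shape of $N(S_i)$ plus monomialization; it is a statement about the output of the specific algorithm of Section 3.2. In exactly the situation above the algorithm does not keep these coordinates: the derivative of the edge polynomial vanishes at $y_0=0$, so the region lies in a wedge $W_{y_0}$ of Lemma 3.3 on which the roles of $x$ and $y$ are interchanged, and after the swap either $|\partial_{yy}S_i|\sim y^{\alpha-2}$ (so one is no longer in the $\beta_{ij}=0$ case and the hypothesis of the theorem is vacuous for that $i$) or the remaining blowups land in situation 1. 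Your proposal never engages with this swap mechanism.

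Relatedly, the bound $\beta_i'\le o/2$ for $m<2$ is not obtained in the paper from supporting-line inequalities on $N(S_i)$, and your sketch of that step already concedes it may fail (``whenever they yield $\beta_i'\le o/2$''). The paper's proof tracks the history of $D_i'$: situation 2 can arise only when, at the previous stage, the edge polynomial $(R_l)_{e_p}(1,y)$ has a zero $r\neq 0$ of order $d_i\ge 2$, and then $\beta_i'\le d_i$; the order $d_i$ is at most the height of the $y$-intercept of $e_p$, hence at most $o$ thanks to the initial rotation, and when $1<m_p<2$ the non-integer slope forces $(R_l)_{e_p}(1,y)$ to skip every other degree, giving $d_i\le o/2$; finally the relation $m\ge m_p$ transfers the dichotomy in $m_p$ to the slope of the final edge. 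This previous-stage information (the multiplicity of a nonzero root of an earlier edge polynomial, and $m\ge m_p$) is not recoverable from $N(S_i)$ alone, which is why a purely final-coordinates argument cannot close the gap. (A smaller point: your claim that every vertex of $N(S_i)$ has height at most $o$ requires the nonvanishing of the $y^o$-coefficient of $S_i$, which the shears preserve but the swaps do not; order preservation alone only gives $N(S_i)\subseteq\{a+b\ge o\}$.)
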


\begin{proof} 
\
\

We will be making use of certain aspects of the proof of Theorem 3.1 (Theorems 2.1 and 2.2 of [G4]), and to have the fullest understanding of this proof it would be helpful to be 
familiar with the proofs of those theorems.

We first go back to the domains $U_l'$ defined subsequent to the statement of Theorem 3.2 For a given such $U_l'$, we examine the Newton
polygon of the function we denoted by $R_l(x,y)$, which was a linear shift of $S(x,y)$. We let $(a_1,b_1),....,(a_n,b_n)$ denote the vertices of $N(R_l)$,
where $a_p > a_{p+1}$ for each $p$, and we let $e_p$ denote the edge of $N(R_l)$ joining $(a_p,b_p)$ to $(a_{p+1},b_{p+1})$. We write the slope of $e_p$
as $-{1 \over m_p}$. For large enough $N$, if $p\neq 1$ or $n$, we will have $R_l(x,y) \sim x^{a_p}y^{b_p}$ on any set $A_p = 
\{(x,y) \in U_l' : Nx^{m_{p-1}} < y < 
{1 \over N}x^{m_p}\}$. The same will be true for $p = 1$ on the set $A_1 = \{(x,y) \in U_l':  y < {1 \over N}x^{m_1}\}$, and for $p = n$ on the set
$A_n = \{(x,y) \in U_l': Nx^{m_{n-1}} < y \}$. 

If $S_i(x,y) \sim x^{\alpha_i}$ on $D_i'$ and $D_i'$ derives from one of the above sets $A_p$ for $b_p \geq 1$, then for the associated vertex $(a_p,b_p)$, 
$\alpha_i$ will equal $a_p + mb_p$, where $m$ is such that $D_i'$ is derived from the portion of $A_p$ where $y \sim x^m$. In this case 
we will be in case 1 of the theorem; we will have that $\partial_yR_l(x,y) \sim x^{a_p + mb_p - m}$, which implies that
$\partial_y S_i(x,y) \sim x^{a_p + mb_p - m}$ and therefore $\partial_{xy} S_i(x,y) \sim x^{a_p + mb_p - m - 1}$ on $A_i$, which translates into case 1
holding with $g_i = m$ and $s_i = a_p + mb_p$. Here $R_1 \times R_2$ is a dyadic rectangle in the coordinates of $U_l'$, transformed into
the coordinates of $D_i'$.

The remaining situations are where $D_i'$ derives from a set that is either of the form $\{(x,y) \in U_l':  {1 \over N} x^{m_p} < y < N x^{m_p}\}$, 
where the lower vertex $(a_p,b_p)$ of the edge $e_p$ satisfies $b_p \geq 1$, or is of the form 
$\{(x,y) \in U_l':  0 <  y < N x^{m_p}\}$, where the lower vertex of the edge $e_p$ is of the form $(a_p,0)$.

 The constructions of the resolution of singularities algorithms are such that
there are a few possibilities. The first is that $D_i'$ derives from a set of the form $B = \{(x,y) \in U_l':  c x^{m_p} < y < c' x^{m_p}\}, 0 \leq c < c'$, 
for which 
$(R_l)_{e_p}(1,y)$ and $\partial_y (R_l)_{e_p}(1,y)$ are both nonzero on $[c,c']$. In this case we have $(R_l)_{e_p}(x,y) \sim x^{a_p+ m_pb_p}$ and 
$\partial_y (R_l)_{e_p}(x,y) \sim x^{a_p + m_pb_p - b_p}$ on $B$, so since the error terms are negligible on $B$ we also have 
$R_l (x,y) \sim x^{a_p + m_pb_p}$, $\partial_y R_l(x,y) \sim x^{a_p + m_pb_p - b_p}$, 
on $B$. The same will hold true with $R_l$ replaced by $S_i$, which then implies that $\partial_{xy}S_i(x,y) \sim x^{a_p + m_pb_p - b_p - 1}$.
This places us into case 1 of this theorem, where $s_i = a_p + m_pb_p$,
$g_i = m_p$ and again $R_1 \times R_2$ is a dyadic rectangle in $U_l'$, transformed into the coordinates of $D_i'$.

The next possibility we consider for the $\{(x,y) \in U_l':  {1 \over N} x^{m_p} < y < N x^{m_p}\}$, $b_p \geq 1$ or 
$\{(x,y) \in U_l':  0 <  y < N x^{m_1}\}$, $b_1 = 0$ situation is when $D_i'$ derives from a 
set of the form $\{(x,y) \in U_l':  c x^{m_p} < y < c' x^{m_p}\}$, $0 \leq c < c'$,  for which  $(R_l)_{e_p}(1,y)$  is nonzero on $[c,c']$ but $\partial_y (R_l)_{e_p}(1,y)$
has a zero in $[c,c']$. In this case, our constructions are such that we may assume that $\{(x,y) \in U_l':  c x^{m_p} < y < c' x^{m_p}\}$ is a subset of one of the
$W_{y_0}$ of Lemma 3.3, that  $ y_0$ is the only zero of $(R_l)_{e_p}(1,y)$  in $[c,c']$, and that either $y_0 = 0$ or $0 < c < c'$.

 If $y_0 = 0$, then switching 
 the $x$ and $y$ variables either puts us into a situation where either no further resolution of singularities is needed and $|\partial_{xx}R_l(x,y)| \sim x^{\alpha - 2}$ from  becomes
$|\partial_{yy}\tilde{R}_l(x,y)|  = |\partial_{yy}S_i(x,y)| \sim y^{\alpha - 2}$ in the new coordinates, meaning we are no longer in the $\beta_{ij} = 0$ situation of this theorem, or
if some further resolution of singularities is needed, we can assume we are restricted to a domain which is a subset of a wedge $y \sim x^m$ for some $x$.
 In this case $|\tilde{R}_l(x,y)| \sim x^{m\alpha}$
and $|\partial_{y}\tilde{R}_l(x,y)| \sim x^{m\alpha - m }$,  so that $|\partial_{y}S_i(x,y)| \sim x^{m\alpha - m}$ and $|\partial_{xy}S_i(x,y)| \sim x^{m\alpha - m - 1}$. Therefore we are in the setting of part 1 of this theorem, with  the $R_1 \times R_2$ 
coming from dyadic rectangles in the reflected regions from which $D_i'$ derives. 
 
  If $y_0 \neq 0$, after reversing the roles of the $x$ and $y$ variables, 
 the statement from Lemma 3.3 that 
 $|\partial_{x}R_l(x,y)| \sim x^{a_p + m_pb_p  - 1}$ becomes the statement 
  $|\partial_{y}\tilde{R}_l(x,y)| \sim x^{ {a_p \over m_p} + b_p -  {1 \over m_p}}$, which implies that $|\partial_y S_i(x,y)| \sim 
  x^{ {a_p \over m_p} + b_p -  {1 \over m_p}}$ and thus  $|\partial_{xy}S_i(x,y)| \sim x^{ {a_p \over m_p} + b_p 
 - 1 - {1 \over m_p}}$, so we are again in the setting of part 1 of this theorem, similar to the last paragraph.
 
Next, we consider the $\{(x,y) \in U_l':  {1 \over N} x^{m_p} < y < N x^{m_p}\}$, $b_p \geq 1$ or 
$\{(x,y) \in U_l':  0 <  y < N x^{m_1}\}$, $b_1 = 0$ situation when $(R_l)_{e_p}(1,y)$  has a 
zero $r \neq 0$ in $[c,c']$ for which $\partial_y (R_l)_{e_p}(1,r) \neq 0$. Then considerations as in four paragraphs ago show that 
$\partial_y S_i(x,y) \sim x^{a_p + m_pb_p - b_p}$, and  $\partial_{xy}S_i(x,y) \sim x^{a_p + m_pb_p - b_p - 1}$, while $S_i(x,y) \sim x^{\alpha_i}$ for
some $\alpha_i \geq a_p + m_pb_p$. Thus we are again in case 1 with $g_i = m_p$, $s_i = a_p + m_pb_p$, and $R_1 \times R_2$ a dyadic rectangle
 in $U_l'$, transformed into the coordinates of $D_i'$.

Lastly, we consider the $\{(x,y) \in U_l':  {1 \over N} x^{m_p} < y < N x^{m_p}\}$, $b_p \geq 1$ or 
$\{(x,y) \in U_l':  0 <  y < N x^{m_1}\}$, $b_1 = 0$ situation when $(R_l)_{e_p}(1,y)$  has a 
zero $r \neq 0$ in $[c,c']$ of order two or greater. Again we may assume that it is the unique zero of $(R_l)_{e_p}(1,y)$ in $[c,c']$ and $c > 0$. Our constructions are such that $m_p \geq 1$ for any domain $U_l'$ where the roles of the $x$ and $y$ variables have not been switched, such as the present situation. Furthermore, due to the linear coordinate shifts described above Lemma 3.3,
if  $(R_l)_{e_p}(x,y)$ has zeroes in such a $U_l'$ then in fact $m_p > 1$. So $m_p > 1$ here. 

The maximum order of a zero of $(R_l)_{e_p}(1,y)$ is the $y$-coordinate of the intersection of the line containing $e_p$ with the $y$-axis, which is at most $o$ since in the
beginning of the resolution of singularities process, we did a rotation to ensure the Newton polygon of $S$ had a vertex at $(0,o)$. Furthermore, 
if $1 < m_p < 2$, since $m_p$ is not a multiple of an integer, the maximum possible
order of a zero $r \neq 0$ of $(R_l)_{e_p}(1,y)$ is ${o \over 2}$ since in this case $(R_l)_{e_p}(1,y)$ must skip at least every other degree. 
We conclude that the order of the zero $r$ of $(R_l)_{e_p}(1,y)$ is at most $o$, and is at most ${o \over 2}$ when
$m_p < 2$.
 
The resolution of singularities process is such that if $D_i'$ corresponds to this higher order zero situation, then in the final coordinates, $N(S_i)$ has a
 vertex at a point $(c_i,d_i)$ where $d_i$ is the order of the zero of $(R_l)_{e_p}(1,y)$ at $r$. Furthermore, since $S_i \sim x^{\alpha_i}$ here, the lowest edge of $N(S_i)$ connects 
$(0,\alpha_i)$ to some $(\alpha_i',\beta_i')$, where $\beta_i' \leq d_i$. If $\beta_i' = 1$, then we are in case 1 of this theorem;  in this situation we have that $\partial_{xy} S_i \sim x^{\alpha_i' - m -  1}$ on $D_i'$ where $m  = \alpha_i - \alpha_i'$, while 
$S_i \sim x^{\alpha_i'}$ on rectangles where $y \sim x^m$.  If $\beta_i' > 1$, by the last paragraph we have $2 \leq \beta_i' \leq d_i \leq {o \over 2}$ if
$m_p < 2$ and $2 \leq \beta_i' \leq d_i \leq o$ if $m_p \geq 2$. Furthermore, if $-{1 \over m}$ denotes the slope of this edge, then $m \geq m_p$, so if 
$m < 2$ we have $\beta_i' \leq {o \over 2}$ and if $m \geq 2$ we have  $\beta_i' \leq o$. Hence we are in case 2 of this theorem whenever $\beta_i' \geq 2$.

Thus we have proved Theorem 3.4 in all cases and we are done. \end{proof}

Suppose now we are in case 2 of Theorem 3.4. Since 
$\partial_{yy}S_i$ is monomialized by our resolution of singularities process,  we may let $\alpha_i''$, $\beta_i''$ be such that $\partial_{yy}S_i \sim
x^{\alpha_i''}y^{\beta_i''}$ in the final coordinates. The following theorem gives us estimates we will need later in the paper.

\begin{theorem} Suppose $i$ corresponds to case 2 of Theorem 3.4, and $\alpha_i$ is as in that theorem, so that $S_i \sim x^{\alpha_i}$ in the final
coordinates. There
 is a constant $C$ such that for each $j$ and $k$ for which $\psi_{ijk}$ is defined we have
\[\int_{[2^{-j}, 2^{-j+1}]\times [2^{-k}, 2^{-k+1}]} |x^{\alpha_i''}y^{\beta_i''  + 2 }|^{-{2 \over 3}\eta}|x^{\alpha_i}|^{-{\eta \over 3}}\,dx\,dy < C\tag{3.3}\]
\end{theorem}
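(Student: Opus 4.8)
The plan is to turn (3.3) into an affine inequality in $(j,k)$ and to check it along the recession directions of the set of rectangles that actually occur.

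\emph{Step 1 (reduction).} On $[2^{-j},2^{-j+1}]\times[2^{-k},2^{-k+1}]$ one has $x\sim 2^{-j}$, $y\sim 2^{-k}$ and Lebesgue measure $\sim 2^{-j-k}$, so the left side of (3.3) is $\sim 2^{jA+kB}$ with $A=-1+\tfrac{\eta}{3}(2\alpha_i''+\alpha_i)$ and $B=-1+\tfrac{2\eta}{3}(\beta_i''+2)$; thus (3.3) is equivalent to $jA+kB\le C$ with $C$ independent of $(j,k)$. Since we are in case 2 of Theorem 3.4, the monomialization $\partial_{yy}S_i\sim x^{\alpha_i''}y^{\beta_i''}$ puts $(\alpha_i'',\beta_i''+2)$ on the lowest edge of $N(S_i)$, which leaves $(\alpha_i,0)$ with slope $-1/m$; hence $\alpha_i''=\alpha_i-m(\beta_i''+2)$ and $A=-1+\eta\alpha_i-\tfrac{2\eta m}{3}(\beta_i''+2)$. (Alternatively, Hölder with exponents $\tfrac32,3$ reduces (3.3) to the per-rectangle bounds $\int|x^{\alpha_i}|^{-\eta}\lesssim 1$ and $\int|x^{\alpha_i''}y^{\beta_i''+2}|^{-\eta}\lesssim 1$, which give the same exponent inequalities.)

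\emph{Step 2 (the index set).} Because $D_i'=\{0<x<a,\ g_i(x)<y<G_i(x)\}$ with $G_i(x)\sim x^{M_i}$, $M_i\ge 1$, and $g_i$ either $\equiv 0$ or $\sim x^{m_i}$ with $m_i>M_i$, the rectangles that occur are those with $j\ge j_0$ and $jM_i-O(1)\le k\le jm_i+O(1)$, the upper constraint absent when $g_i\equiv 0$. As $jA+kB$ is affine, its boundedness above on this set is equivalent to its non-positivity on the recession cone, which is spanned by $(1,M_i)$ and $(1,m_i)$, and also by $(0,1)$ when $g_i\equiv 0$. Hence it suffices to prove (I) $A+M_iB\le 0$, (II) $A+m_iB\le 0$, and, when $g_i\equiv 0$, (III) $B\le 0$.

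\emph{Step 3 (a priori inputs).} Three facts are used. (a) Every coordinate change in the resolution — the initial rotation, the shears $(x,y)\mapsto(x,y+m_lx)$, the reversals $x\leftrightarrow y$, and the maps $\eta_i$ — preserves Lebesgue measure, so $\eta$ is the minimum of the sublevel exponents of the pieces; the piece $D_i'$, on which $|S_i|\sim x^{\alpha_i}$, contributes the exponent $(M_i+1)/\alpha_i$, whence $\eta\alpha_i\le M_i+1$. (b) $\eta\le 1/d(S)\le 2/o$, the first inequality being the Newton-distance bound for the singularity exponent and the second holding because $\nabla S(0,0)=0$ forces $N(S)\subset\{a+b\ge o\}$. (c) The case-2 data of Theorem 3.4: $2\le\beta_i''+2\le\beta_i'\le d_i$, with $d_i\le o/2$ when $m<2$ (so $o\ge 4$ in that case) and $d_i\le o$ when $m\ge 2$, and always $m>1$; moreover $\alpha_i=\alpha_i'+m\beta_i'\ge m\beta_i'\ge m(\beta_i''+2)$ since $\alpha_i'\ge 0$.

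\emph{Step 4 (verification and the main obstacle).} In the model situation $M_i=m$ everything is clean: (I) reads $A+mB=-(1+m)+\eta\alpha_i\le 0$ by (a), while (II) and (III) both reduce, via (a), to $B\le 0$, i.e.\ to $\eta(\beta_i''+2)\le\tfrac32$; and this holds since $\eta(\beta_i''+2)\le\tfrac2o(\beta_i''+2)\le\tfrac2o\cdot\tfrac o2=1$ when $m<2$ (using $o\ge 4$ and (b)), and $\eta(\beta_i''+2)\le\tfrac{\eta\alpha_i}{m}\le\tfrac{m+1}{m}\le\tfrac32$ when $m\ge 2$ (using (a) and (c)). The genuine work — and where I expect the real obstacle — is the case $M_i>m$, which arises when one of $\partial_{yy}S_i$, $\partial_y(S_i(x,y)-S_i(0,y))$, or the Hessian determinant transitions between slopes $m$ and $m_i$ and forces a finer subdivision of $D_i'$; there one must extract from the constructions of Theorem 3.4 and of the "Adjusting the algorithm" subsection the precise values of $\alpha_i''$, $\beta_i''$, $M_i$, $m_i$ on the resulting pieces — in particular a usable bound on the excess $M_i-m$ in terms of $\alpha_i$, $\beta_i''$ and $o$ — so that (I)–(III) again follow from (a)–(c). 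Everything outside this case-bookkeeping is the routine affine reduction of Steps 1–3.
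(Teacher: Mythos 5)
Your Step 1 rests on the claim that the monomialization of $\partial_{yy}S_i$ places $(\alpha_i'',\beta_i''+2)$ on the lowest edge of $N(S_i)$, hence $\alpha_i''=\alpha_i-m(\beta_i''+2)$. This is not justified and need not hold: $(\alpha_i'',\beta_i''+2)$ is a vertex of $N(y^2\partial_{yy}S_i)$, so it lies in $N(S_i)$ and therefore only on or \emph{above} the supporting line of the lowest edge, giving $\alpha_i''+m(\beta_i''+2)\ge\alpha_i$ — the wrong direction, since bounding $jA+kB$ from above requires upper control of $\alpha_i''$. (For instance, for a function like $x^{10}+x^4y^3+x^9y^2$ on a piece where $y\ll x^5$, one has $y^2\partial_{yy}S_i\sim x^9y^2$ and $(9,2)$ lies strictly above the edge line $a+2b=10$ while $S_i\sim x^{10}$.) When $M_i=m$ you can recover the identity, because the domination of the monomialized term at the top boundary $y\sim x^{M_i}$ gives the reverse inequality; but for $M_i>m$ — the case you yourself flag as the "real obstacle" and leave open — your conditions (I)–(III) are not established, and that regime is exactly where the statement has content. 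The paper never uses an exponent identity: it uses only the pointwise bound $x^{\alpha_i'}y^{\beta_i'}\le Cx^{\alpha_i''}y^{\beta_i''+2}$ on the rectangle (valid since $(\alpha_i',\beta_i')$ is a vertex of $N(y^2\partial_{yy}S_i)$, which is monomialized on $D_i'$), proves the lemma $\frac23\beta_i'\eta\le1$ (via $\eta\le(1+\frac1m)/q$ together with the case‑2 bounds $\beta_i'\le\frac o2$ for $m<2$, $\beta_i'\le o$ for $m\ge2$), so that after substitution the $y$-exponent is $\ge-1$; then, since $x^{\alpha_i}\gtrsim x^{\alpha_i'}y^{\beta_i'}$ forces $y\lesssim x^m$ on $D_i'$, the dyadic integral is largest when $y\sim x^m$, where it is comparable to $\int|S_i|^{-\eta}$, uniformly bounded by the definition of $\eta$. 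That argument covers all admissible $(j,k)$, including deep rectangles $k\gg jm$, with no case split on $M_i$.

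Your parenthetical Hölder fallback is also not a valid substitute: with exponents $\frac32$ and $3$ it reduces $(3.3)$ to the per‑rectangle bound $\int|x^{\alpha_i''}y^{\beta_i''+2}|^{-\eta}\lesssim1$, which is not available and can fail, since the only control is $\beta_i''+2\le\beta_i'$ with $\frac23\beta_i'\eta\le1$, so $(\beta_i''+2)\eta$ may lie in $(1,\frac32]$ and the $y$-integral then grows without bound over deep rectangles. The entire point of the $\frac23$–$\frac13$ split in $(3.3)$ is that only two thirds of the power $\eta$ falls on the $y$-factor; Hölder discards that gain and replaces the statement by a strictly stronger one. So both your main route and your fallback have genuine gaps, and the missing ingredients are precisely the paper's combination of the pointwise inequality $(3.4)$, the lemma $\frac23\beta_i'\eta\le1$, and the monotonicity-in-$k$ argument pinned at $y\sim x^m$.
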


\begin{proof} Let $\alpha_i'$, and $\beta_i'$ be as in Theorem 3.4. Thus $S_i(x,y) \sim x^{\alpha_i}$ on $[2^{-j}, 2^{-j+1}]\times [2^{-k}, 2^{-k+1}]$ and
the lowest edge of the Newton polygon $N(S_i)$ joins $(\alpha_i, 0)$ to $(\alpha_i',\beta_i')$, where $\beta_i' \geq 2$. Since $\beta_i' \geq 2$, 
$(\alpha_i',\beta_i')$ is also a vertex of $N(y^2 \partial_{yy} S_i)$. Since $\partial_{yy}S_i$ is monomialized by our resolution of singularities process, 
$y^2\partial_{yy}S_i$ is also monomialized, comparable to $x^{\alpha_i''}y^{\beta_i''  + 2 }$. Since $(\alpha_i',\beta_i')$ is a vertex of
 $N(y^2 \partial_{yy} S_i)$, $x^{\alpha_i'}y^{\beta_i'}$ is either
comparable to $|y^2\partial_{yy}S_i(x,y)| \sim x^{\alpha_i''}y^{\beta_i'' + 2}$ on $[2^{-j}, 2^{-j+1}]\times [2^{-k}, 2^{-k+1}]$, or is dominated by
 $x^{\alpha_i''}y^{\beta_i'' + 2}$ on $[2^{-j}, 2^{-j+1}]\times [2^{-k}, 2^{-k+1}]$. In either case, there is a constant $C'$ such that on
 $[2^{-j}, 2^{-j+1}]\times [2^{-k}, 2^{-k+1}]$ we have
\[x^{\alpha_i'}y^{\beta_i'} \leq C'x^{\alpha_i''}y^{\beta_i'' + 2} \tag{3.4}\]
Consequently, we have
\[\int_{[2^{-j}, 2^{-j+1}]\times [2^{-k}, 2^{-k+1}]} |x^{\alpha_i''}y^{\beta_i''  + 2 }|^{-{2 \over 3}\eta}|S_i|^{-{\eta \over 3}} < C'' 
\int_{[2^{-j}, 2^{-j+1}]\times [2^{-k}, 2^{-k+1}]}(x^{-{2 \over 3}\alpha_i'\eta}y^{-{2 \over 3}\beta_i'\eta}) (x^{-{\eta \over 3}\alpha_i}) \tag{3.5}\]
Thus we need to show that the integrand on the right in $(3.5)$ is uniformly integrable over $j$ and $k$. The main issue is to show that 
${2 \over 3}\beta_i'\eta \leq 1$, which will imply that the integral on the right of $(3.5)$ is nonincreasing in $k$. 

\begin{lemma} ${2 \over 3}\beta_i'\eta \leq 1$ for all $i$.
\end{lemma}
\begin{proof} Let $-{1 \over m}$ denote the slope of the edge $e$ of $N(S_i)$ joining $(\alpha_i, 0)$ to $(\alpha_i',\beta_i')$. Then when $y \sim x^m$,
the functions $x^{\alpha_i}$ and $x^{\alpha_i'}y^{\beta_i'}$ are of comparable magnitude. In particular, the integral of 
$(x^{\alpha_i'}y^{\beta_i'})^{-\eta}$ will
be uniformly bounded over dyadic rectangles where $y \sim x^m$, since the same is true for $|S_i|^{-\eta} \sim x^{-\alpha_i\eta}$. This translates into the statement that $-\eta(\alpha_i' + m\beta_i') +  m + 1
\geq 0$, or equivalently that $\eta \leq {m + 1 \over \alpha_i' + m\beta_i'}$. When $m \geq 2$ this implies that 
\[\eta \leq {m + 1 \over \alpha_i' + m\beta_i'} \leq {m + 1 \over m \beta'} \leq {3 \over 2\beta_i'}\]
This gives the lemma when $m \geq 2$. When $m < 2$, case 2 of Theorem 3.4 says that
$\beta_i' \leq {o \over 2}$. But any function in two variables satisfies $\eta \leq {2 \over o}$ since this is the exponent for $|x|^o + |y|^o$.
 As a result, if $m < 2$ we have ${2 \over 3}\beta_i'\eta  \leq {2 \over 3}$, better
than the estimate needed. This completes the proof of the lemma.
\end{proof}

We can now complete the proof of Theorem 3.5 in short order.  Since 
$|S| \sim x^{\alpha_i}$, the vertex $(0,\alpha_i)$ dominates $(\alpha_i', \beta_i')$ in the sense that for
some constant $C$ we have $x^{\alpha_i} \geq Cx^{\alpha_i'}y^{\beta_i'}$ on $D_i'$. This translates into the statement that $y < C' x^m$ throughout
 our domain
for some constant $C'$. Consequently, since by Lemma 3.6 the integral on the right of $(3.5)$ is nonincreasing in $k$, it can never be larger
than when $y \sim x^m$. But $x^{\alpha_i}$ and $x^{\alpha_i'}y^{\beta_i'}$ are of comparable magnitude when $y \sim x^m$. Hence in this
situation, the right hand integral of $(3.5)$ is simply comparable to $\int_{{[2^{-j}, 2^{-j+1}]\times [2^{-k}, 2^{-k+1}]}} |S_i|^{-\eta}\,dx\,dy$, which is uniformly bounded in $j$ and $k$
by the definition of $\eta$ as the supremum of the exponents making such integrals finite. This concludes the proof of Theorem 3.5.

\end{proof}

\section{The proof of Theorem 1.1.}

\subsection{The decomposition of the operator}

Let $\rho_0(x)$ be a function on $\R^3$ whose Fourier transform is nonnegative, compactly supported, and equal to one on a neighborhood of the origin. Let $\rho(x) =
8\rho_0(2x) - \rho_0(x)$. Then one can write $\delta(x) = \rho_0(x) + \sum_{n=0}^{\infty} 2^{3n} \rho(2^nx)$, and correspondingly we write
\[Tf = T_0f + \sum_{n=0}^{\infty} T_nf \tag{4.1}\]
Here $T_0 f = Tf \ast \rho_0$ and $T_nf = Tf \ast  2^{3n} \rho(2^nx)$. $T_0$ immediately satisfies the estimates of Theorem 1.1 and more, so we 
need only consider $T_n$ for $n > 0$.
Let $\eta_1 = \min(\eta, {2 \eta' \over 1 + 2\eta'})$ as in Theorem 1.1. Then the vertices of the trapezoid in Theorem 1.1 are
$(0,0), ({\eta_1 \over 2}, \eta_1), (1 - {\eta_1 \over 2}, \eta_1)$, and $(1,0)$. For each $\epsilon > 0$, we will exhibit a $p(\epsilon) > 0$ and
a $\delta(\epsilon) > 0$  such that 
such that $||T_n f||_{L^{p(\epsilon)}_{\eta_1 - \epsilon}} \leq C_{\epsilon} 2^{-\delta(\epsilon)n} ||f||_{L^{p(\epsilon)}}$. Adding over all 
$n$ gives $||T f||_{L^{p(\epsilon)}_{\eta_1 - \epsilon}} \leq C_{\epsilon}' ||f||_{L^{p(\epsilon)}}$.
Duality then gives the corresponding estimate for $p'(\epsilon)$ satisfying ${1 \over p(\epsilon)} + {1 \over p'(\epsilon)} = 1$.
As $\epsilon$ approaches zero, ${1 \over p(\epsilon)}$ will approach ${\eta_1 \over 2}$, and the above estimates will then imply Theorem 1.1.

\noindent Next, we write $T_n f = f \ast \mu_n$, where $\mu_n$ is the measure whose Fourier transform satisfies
\[\widehat{\mu_n}(\lambda) = \widehat{\rho}(2^{-n} \lambda) \int e^{-i\lambda_1 x - i\lambda_2y - i\lambda_3S(x,y)}\phi(x,y)\,dx\,dy \tag{4.2}\]
Let $\alpha_1(x)$ be a smooth nonnegative compactly supported function on $\R$ that is equal to 1 on a neighborhood of $0$, and let $\alpha_2(x) = 
1 - \alpha_1(x)$. For a $c > 0$ to be determined
by our arguments, we write $\mu_n = \mu_n^1 + \mu_n^2$, where for $k = 1, 2$ we have
\[\widehat{\mu_n^k} (\lambda) = \widehat{\rho}(2^{-n} \lambda)\alpha_k(c2^{-n}\lambda_3)\int e^{-i\lambda_1 x - i\lambda_2y - i\lambda_3S(x,y)}\phi(x,y)\,dx\,dy \tag{4.3}\]
We correspondingly let $T_n = T_n^1 + T_n^2$, where $T_n^k f = f \ast \mu_n^k$. If $\phi$ is supported on a sufficiently small neighborhood of the
origin, the operator $T_n^1$ is readily seen to be bounded from $L^2$ to any $L^2_s$ for $s > 0$ due to the cutoff function $\alpha_1(c2^{-n}\lambda_3)$
present in $(4.3)$. To see why, suppose for example $\lambda_1 > \lambda_2$. Then the $\alpha_1(c2^{-n}\lambda_3)$ factor ensures that  
$|\lambda_3|/|\lambda|$, and therefore $|\lambda_3|/|\lambda_1|$, is bounded above. Thus if $\phi$ is supported on a sufficiently small neighborhood 
of the origin, the phase function in $(4.3)$ has $x$ derivative bounded below by some $c'|\lambda_1| > c''|\lambda|$, and one can integrate by parts
in $x$ repeatedly in $(4.3)$ to get an estimate $|\widehat{\mu_n^1} (\lambda)| \leq C_s|\lambda|^{-s}$ holding for any $s$.

Thus $T_n^1$ is bounded from $L^2$ to any $L^2_s$. Since $|\lambda| \sim 2^{-n}$ in $(4.3)$, this immediately implies an
estimate  $||T_n^1 f||_{L^2_s} \leq C_s 2^{-n}||f||_{L^2}$  as well. Interpolating this with the $L^p$ to $L^p$ estimates for $p \in (1,\infty)$ for $s$ 
large enough will then give the estimates we need, and more.

So we focus our attention on $T_n^2$. To simplify notation, we write $\sigma(\lambda) = \widehat{\rho}(\lambda)\alpha_2(c\lambda_3)$, so that we have
\[\widehat{\mu_n^2} (\lambda) = \sigma(2^{-n} \lambda)\int e^{-i\lambda_1 x - i\lambda_2y - i\lambda_3S(x,y)}\phi(x,y)\,dx\,dy \tag{4.4}\]
Assume $\phi(x,y)$ is supported on a neighborhood of the origin on which Theorem 3.2 applies, and write $\phi(x,y) = \sum_{ijk}\phi_{ijk}(x,y)$ as in that
theorem. We correspondingly write $T_n^2 = \sum_{ijkn} U_{ijkn}$, where $U_{ijkn} f = f\ast \nu_{ijkn}$. Here $\nu_{ijkn}$ is the measure whose
Fourier transform satisfies
\[\widehat{\nu_{ijkn}} (\lambda) = \sigma(2^{-n} \lambda)\int e^{-i\lambda_1 x - i\lambda_2y - i\lambda_3S(x,y)}\phi_{ijk}(x,y)\,dx\,dy \tag{4.5}\]
Let $\eta_i$ be as in Theorem 3.1 in the situation at hand.  Let $\gamma_{ijk}(x,y) = \phi_{ijk} \circ \eta_i(x,y)$. Then
due to the form of the coordinate changes $(x,y) \rightarrow (x, y + h_i(x))$ of this paper, for some $h_i(x)$ equation $(4.5)$ becomes
\[\widehat{\nu_{ijkn}} (\lambda) = \sigma(2^{-n} \lambda)\int e^{-i\lambda_1 x  -  i\lambda_2y  - i\lambda_2 h_i(x) - i\lambda_3S_i(x,y)}\gamma_{ijk}(x,y)\,dx\,dy \tag{4.6}\]
Here $h_i(x)$ is a real analytic function of $x^{1 \over M}$ for some $M$ and $S_i(x,y)$ is the composition of $S(\pm x, \pm y)$ or $S(\pm y, \pm x)$ with
the map   $(x,y) \rightarrow (x, y + h_i(x))$.
Recall by Theorem 3.2 the $\gamma_{ijk}(x,y)$ are uniformly bounded and for some $C$ the function $\gamma_{ijk}(x,y)$ is supported on 
$[C^{-1}2^{-j}, C 2^{-j}] \times [C^{-1}2^{-k}, C 2^{-k}]$.
Hence for any $s$, the measure $(1 - \Delta)^{s \over 2} \nu_{ijkn}$ is a function with $L^1$ norm bounded by $C_s 2^{ns - j - k}$. Thus 
$(1 - \Delta)^{s \over 2}U_{ijkn} f$ is the convolution of $f$ with a function of $L^1$ norm bounded by $C_s 2^{ns - j - k}$. So by Young's inequality, 
$(1 - \Delta)^{s \over 2}U_{ijkn}$ is bounded on any $L^p$ with norm bounded by $C_s 2^{ns - j - k}$ as well. In other words $U_{ijkn}$ is bounded from 
$L^p$ to $L^p_s$ with norm bounded by $C_s 2^{ns - j - k}$.

In the proof at hand, we are considering only $s \leq \eta_1$. For any such $s$, by the above we have 
$||\sum_{j +  k \geq 2n\eta_1} U_{ijkn}||_{L^p \rightarrow L^p_s} \leq C_s' 2^{-\eta_1 n}$, which decreases exponentially in $n$ and therefore 
satisfies the estimates we seek. Hence it suffices to consider $||\sum_{j +  k < 2n\eta_1} U_{ijkn}||_{L^p \rightarrow L^p_s}$. There are only $O(n^2)$
such terms, so if we show that each  $||U_{ijkn}||_{L^p \rightarrow L^p_s} \leq C_s 2^{-\delta_s n}$  for the $(p,s)$ at hand, where $\delta_s > 0$, 
that will be enough to prove Theorem 1.1. This is the estimate we will prove.

\subsection{Defining the  $V_{ijkn}^l$ and $W_{ijkn}^l$}

Restating the above, the goal is to show for $\epsilon > 0$ that  $||U_{ijkn} f||_{L^{p(\epsilon)}_{\eta_1 - \epsilon}} \leq C_{\epsilon} 2^{-\delta(\epsilon)n} ||f||_{L^{p(\epsilon)}}$ where ${1 \over p(\epsilon)}$ approaches ${\eta_1 \over 2}$ as $\epsilon \rightarrow 0$.
We will make extensive use of the expression $(4.6)$ for $\widehat{\nu_{ijkn}} (\lambda)$. To simplify notation, we let $R(x,y) = S_i(x,y)$, so that $(4.6)$
becomes
\[\widehat{\nu_{ijkn}} (\lambda) = \sigma(2^{-n} \lambda)\int e^{-i\lambda_1 x  -  i\lambda_2y  - i\lambda_2 h_i(x) - i\lambda_3R(x,y)}\gamma_{ijk}(x,y)\,dx\,dy \tag{4.7}\]
Let $P(x,y)$ denote the phase function $\lambda_1 x + \lambda_2y + \lambda_2 h_i(x)  + \lambda_3R(x,y)$ of $(4.6)$. Note that $P_{yy}(x,y) = \lambda_3 R_{yy}(x,y)$, and also that
$R_{yy}(x,y) = \partial_y^2 S_i(x,y)$ is monomialized in the final coordinates. In other words, on the support of $\gamma_{ijk}(x,y)$, for some $\alpha_1$ and 
$\beta_1$ one has that $R_{yy} \sim x^{\alpha_1}y^{\beta_1} \sim 2^{-j\alpha_1 - k\beta_1}$. Thus for fixed $x$, one can view the $y$
 integral in $(4.6)$ as an integral of a function whose second derivative is roughly constant, and then use standard stationary phase on this integral. We
will take this tack. If $P$ has a critical point in such a $y$ integral, the natural width of a cycle of the phase at the critical point is 
$(|\lambda|2^{-j\alpha_1 - k\beta_1})^{-{1 \over 2}}$. (We can use $|\lambda|$ and not $|\lambda_3|$ here since $|\lambda| \sim |\lambda_3|$ on the
support of $\sigma(2^{-n}\lambda)$). Correspondingly, we will have two different arguments, essentially depending on whether or not  
$2^{-k} > (|\lambda|2^{-j\alpha_1 - k\beta_1})^{-{1 \over 2}}$, since the length of the $y$ interval of integration is $\sim 2^{-k}$. 

We will put the above philosophy into effect as follows. It will make our arguments somewhat technically easier if for a small but fixed $\epsilon_0 > 0$ 
we make the two cases 
$2^{-k} > |\lambda|^{-{1 \over 2} + {\epsilon_0 \over 8}}(2^{-j\alpha_1 - k\beta_1})^{-{1 \over 2}}$ and 
$2^{-k} \leq |\lambda|^{-{1 \over 2} + {\epsilon_0 \over 8}}(2^{-j\alpha_1 - k\beta_1})^{-{1 \over 2}}$. We refer to $U_{ijkn}$ corresponding to
the first situation as case 1 operators, and $U_{ijkn}$ corresponding to the second situation as case 2 operators. The argument for case 1 operators 
will have some resemblance to an argument that can be used for phases with nondegenerate Hessian determinant, and these operators will account for
why the index $\eta'$ appears in Theorem 1.1. The argument for case 2 operators doesn't use the Hessian determinant, and instead resembles
to some extent an argument that can be used to find uniform decay estimates for Fourier transforms of surface measures, and accounts for the index
$\eta$ appearing in Theorem 1.1.

We split case 1 operators as a sum $U_{ijkn} =  V_{ijkn}^1 + V_{ijkn}^2$ and case 2 operators as a sum $U_{ijkn} =  W_{ijkn}^1 + W_{ijkn}^2$ as 
follows. As we did earlier, we let $\alpha_1$ be a smooth nonnegative compactly supported function on $\R$ equal to $1$ on a neighborhood of $0$), and 
let $\alpha_2(x) = 1- \alpha_1(x)$. Define $R_{yy}^* = 2^{-j\alpha_1 - k\beta_1}$, which we view as a fixed number that is comparable to $R_{yy}$ on
the domain of integration in $(4.7)$. We first define $W_{ijkn}^l$ for $l = 1, 2$ by $W_{ijkn}^l f = f \ast \mu_{ijkn}^l$, where 
\[\widehat{\mu_{ijkn}^l} (\lambda) = \sigma(2^{-n} \lambda)\int e^{-i\lambda_1 x  -  i\lambda_2y  - i\lambda_2 h_i(x) - i\lambda_3R(x,y)}\gamma_{ijk}(x,y)\alpha_l\big(|\lambda|^{- {\epsilon_0 \over 4}}2^{-k} P_y(x,y)\big) \,dx\,dy \tag{4.8}\]
We next define $ V_{ijkn}^1$ and $ V_{ijkn}^2$. Our constructions can be done such that each $\gamma_{ijk}$ is supported on the 
union of boundedly many rectangle of dimensions $c2^{-j}$ by $c2^{-k}$ such that our various estimates hold on the rectangle of dimensions $c'2^{-j}$ 
by $c'2^{-k}$ for some $c' > c$. Since $P_{yy} \neq 0$ throughout, if for a given $x$ there is some $y$
for which $P_y(x,y) = 0$ in one of these enlarged rectangles, we may define $y^*(x)$ by the condition that $P_y(x,y^*(x)) = 0$.
We define $V_{ijkn}^1 f = f \ast \nu_{ijkn}^1$ and $V_{ijkn}^2 f = f \ast \nu_{ijkn}^2$, where for $l = 1,2$
we have
\[\widehat{\nu_{ijkn}^l} (\lambda) = \sigma(2^{-n} \lambda)\int e^{-i\lambda_1 x  -  i\lambda_2y  - i\lambda_2 h_i(x) - i\lambda_3R(x,y)}\gamma_{ijk}(x,y)\alpha_l\big(|\lambda|^{{1 \over 2} - {\epsilon_0 \over 16}}(R_{yy}^*)^{1 \over 2} (y - y^*(x)\big) \,dx\,dy \tag{4.9}\]
If $y^*(x)$ does not exist in any of the enlarged rectangles, we replace the $\alpha_l\big(|\lambda|^{{1 \over 2} - {\epsilon_0 \over 16}}
(R_{yy}^*)^{1 \over 2} (y - y^*(x)\big)$ factor by $0$ if $l =1$ and by $1$ if $l = 2$. 

Suppose $(x,y)$ is such that the factor given by $\alpha_1\big(|\lambda|^{{1 \over 2} - {\epsilon_0 \over 16}}
(R_{yy}^*)^{1 \over 2} (y- y^*(x)\big)$ is well-defined and nonzero. Then $|y - y^*(x)| \leq C |\lambda|^{-{1 \over 2} + {\epsilon_0 \over 16}}
(R_{yy}^*)^{-{1 \over 2}}$. Since we are in case 1, we have $2^{-k} > |\lambda|^{-{1 \over 2} + {\epsilon_0 \over 8}}(R_{yy}^*)^{-{1 \over 2}}$, so
that $|y - y^*(x)| \leq C|\lambda|^{-{\epsilon_0 \over 16}} 2^{-k}$.  We may assume that  $n$ is large enough so that the $C|\lambda|^{-{\epsilon_0 \over 16}} 2^{-k}$ factor here is less than $(c' - c)2^{-k}$. Thus
if $(x,y)$ is on the upper or lower boundary of one of the  $c2^{-j}$ by $c2^{-k}$ rectangles and  
$\alpha_1\big(|\lambda|^{{1 \over 2} - {\epsilon_0 \over 16}}
(R_{yy}^*)^{1 \over 2} (y- y^*(x)\big)$ is well-defined and nonzero, the monotone function $y^*(x)$ will still be defined  (in one of the $c'2^{-j}$ by
 $c'2^{-k}$ rectangles) as this factor ``makes its exit'' through the boundary. Thus in the $l = 1$ case,
the set of points on which $y^*(x)$ is defined contains a union of boundedly many intervals containing the points $x$ for which the integrand in $(4.9)$ is nonzero for any $y$.

If $x$ is such that $y^*(x)$ does not exist in the $c'2^{-j}$ by $c'2^{-k}$ rectangles, then for any $(x,y)$ in the support of $\gamma_{ijk}$, for such an $x$
we must have that $P_y(x,y') \neq 0$ for any $y'$ with $|y' - y| < (c' - c)2^{-k}$. Since $|P_{yy}| > C'|\lambda| R_{yy}^*$, this means that $|P_y(x,y)| > C''|\lambda|R_{yy}^* 2^{-k} > C''' |\lambda|^{{1 \over 2}  + {\epsilon_0 \over 8}}(R_{yy}^*)^{1 \over 2}$, the latter inequality following from the fact 
that we are in case 1.

\subsection{The analysis of the operators $V_{ijkn}^2$ and $W_{ijkn}^2$}

The idea behind the analysis of $V_{ijkn}^2$ and $W_{ijkn}^2$ are similar. We simply repeatedly integrate by parts in the $y$ variable in $(4.9)$.
The $\alpha_2$ factors in both integrals are such that each time we do such an integration by parts, one gains a factor of $C|\lambda|^{-{\epsilon_0 \over 16}}$. Thus doing it enough times will give that both $|\widehat{\mu_{ijkn}^2} (\lambda)|$ and $|\widehat{\nu_{ijkn}^2} (\lambda)|$ decay faster
than any negative power of $|\lambda|$. Hence $V_{ijkn}^2$ and $W_{ijkn}^2$ are bounded from $L^2$ to any $L^2_s$ with operator norm bounded by
say $C2^{-n}$. Thus interpolating with the $L^p$ to $L^p$ bounds for $1 < p < \infty$ gives the desired $L^p$ to $L^p_s$ bounds, and more.

We start with  $V_{ijkn}^2$. If $y^*(x)$ exists, then in the support of the integrand of $(4.9)$, one has $|y - y^*(x)| \geq C|\lambda|^{-{1 \over 2} + {\epsilon_0 \over 16}}
(R_{yy}^*)^{-{1 \over 2}}$. Since $P_y(x,y^*(x)) = 0$, for such a $y$ we have $|P_y(x,y)| = |P_y(x,y) - P_y(x,y^*(x))| 
\geq C'|\lambda_3|R_{yy}^* |y - y^*(x)|$. Since $|\lambda_3| \sim |\lambda|$ on the domain in question, the lower bounds on $|y - y^*(x)|$ lead to
\[ |P_y(x,y)| \geq C |\lambda|^{{1 \over 2} + {\epsilon_0 \over 16}} (R_{yy}^*)^{{1 \over 2}} \tag{4.10}\]
Equation $(4.10)$ will hold whenever the integrand in $(4.9)$ is nonzero for $l = 2$ and $y^*(x)$ exists. But even if $x$ is such that $y^*(x)$ does not exist, 
$(4.10)$ will still hold for each $y$ for which $\gamma_{ijk}(x,y) \neq 0$ by the discussion at the end of the last section.
 We now integrate by parts repeatedly in $(4.9)$ in $y$, integrating 
$-iP_y(x,y)e^{-iP(x,y)}$ in $y$ then differentiating $-{1 \over i P_y(x,y)}$ times the remaining factors. Each time we do so, we get a factor of
$|{1 \over i P_y(x,y)}|$ that is bounded by  $C'''|\lambda|^{-{1 \over 2} - {\epsilon_0 \over 16}} (R_{yy}^*)^{-{1 \over 2}}$, and various other factors 
that depend on where the derivative lands. We examine each possibility in this regard.

Each time the derivative lands on a negative power of $P_y(x,y)$, the integration by parts results in a factor bounded by
 $C\big|{P_{yy}(x,y) \over (P_y(x,y))^2 }\big|$. Note that
$|P_{yy}(x,y)| = |\lambda_3 R_{yy}(x,y)| \leq C|\lambda| R_{yy}^*$, so in view of $(4.10)$ this factor is bounded by 
$C|\lambda|^{-{\epsilon_0 \over 8}}$. If the derivative lands on some derivative of the $\alpha_2$ factor in $(4.9)$, the integration by parts results
in a factor bounded by $C |\lambda|^{{1 \over 2} - {\epsilon_0 \over 16}}(R_{yy}^*)^{1 \over 2} | (P_y(x,y))|^{-1}$. The support of this derivative
of the $\alpha_2$ factor is such that $(4.10)$ holds, so we get a bound of $C'|\lambda|^{-{\epsilon_0 \over 8}}$ again. 

If the derivative lands on
some $y$ derivative of $\gamma_{ijk}(x,y)$, then by part 2 of Theorem 3.2, we incur a factor bounded by $C2^k| P_y(x,y)|^{-1}$, which by
$(4.10)$ is at most $C'2^k |\lambda|^{-{1 \over 2} -  {\epsilon_0 \over 16}} (R_{yy}^*)^{-{1 \over 2}}$. Since we are in case $1$, we have
$2^k <  |\lambda|^{{1 \over 2} - {\epsilon_0 \over 8}}(R_{yy}^*)^{1 \over 2}$, and we have a bound of $C''|\lambda|^{-{3  \over 16}\epsilon_0}$,
better than what we need. Lastly, if the derivative lands on some $\partial_y^m R(x,y)$ for some $m \geq 2$, we go from an $m$th derivative to an
$m+1$th derivative of $R(x,y)$. By Corollary 3.1.2, each such derivative results in a factor of $C|y|^{-1} < C' 2^k$ on top of the bound $C''R_{yy}^*$ for 
the second derivative. Thus once again we incur a factor bounded by a constant times $C2^{-k}| P_y(x,y)|^{-1}$, which we saw above is bounded by
$C''|\lambda|^{-{3  \over 16}\epsilon_0 }$.

In summary, after each integration by parts, we gain a factor of at least $C|\lambda|^{-{\epsilon_0 \over 8}}$. Thus as explained in the beginning of this
section, by integrating by parts enough times and then interpolating we can get any $L^p$ to $L^p_s$ boundedness we desire. This concludes the 
analysis of the operators $V_{ijkn}^2$.

We now do a similar analysis for the $W_{ijkn}^2$. This time, in view of the support of the $\alpha_2$ factor in $(4.8)$, we replace $(4.10)$  by
\[|P_y(x,y)| \geq C|\lambda|^{\epsilon_0 \over 4} 2^k \tag{4.11}\]
We now perform the same repeated integration by parts as before, and examine the effect of the derivative landing on each possible factor. If it lands on a 
negative power of $P_y(x,y)$, we incur a factor bounded by $C\big|{P_{yy}(x,y) \over (P_y(x,y))^2 }\big|$, which in view of $(4.11)$ and the fact that
$P_{yy} = \lambda_3 R_{yy}$ is bounded by
$C' R_{yy}^*|\lambda|^{1 - {\epsilon_0 \over 2}} 2^{-{2k}}$. Since we are in case 2, we have $2^{-k} \leq |\lambda|^{-{1 \over 2} + {\epsilon_0 \over 8}}(R_{yy}^*)^{-{1 \over 2}}$. Hence the factor is bounded by $C'' |\lambda|^{-{\epsilon_0 \over 4}}$, which gives us what we need.

Next, if the derivative lands on a derivative of the $\alpha_2$ factor in $(4.8)$, then the integration by parts incurs a factor bounded by
$C|\lambda|^{1 -{\epsilon_0 \over 4}}2^{-k} R_{yy}^* | P_y(x,y)|^{-1}$. By $(4.11)$ this is at most $C'|\lambda|^{1 -{\epsilon_0 \over 2}}
2^{-2k} R_{yy}^*$, the same bound as before. So once again we have a bound of $C''|\lambda|^{-{\epsilon_0 \over 4}}$.

If the derivative lands on some $y$ derivative of $\gamma_{ijk}(x,y)$, then by part 2 of Theorem 3.2, we incur a factor bounded by
 $C2^k| P_y(x,y)|^{-1}$, which in view of $(4.11)$ is at most $C'|\lambda|^{-{\epsilon_0 \over 4}}$, as needed. Lastly, if the derivative 
  lands on a $\partial_y^m R(x,y)$ for some $m \geq 2$, so that we go from an $m$th derivative to an
$m+1$th derivative of $R(x,y)$, exactly as in the case of the $V_{ijkn}^2$ operators we again incur a factor of $C2^k| P_y(x,y)|^{-1}$, thus again giving
us the bound $C|\lambda|^{-{\epsilon_0 \over 4}}$ we need.

In summary, after each integration by parts, we gain a factor of at least $C|\lambda|^{-{\epsilon_0 \over 4}}$. So just as in the case of the operators
$V_{ijkn}^2$, by integrating by parts enough times we can get any negative power of $|\lambda|$ we seek, and therefore using interpolations we 
can get any $L^p$ to $L^p_s$ boundedness statement. This concludes the analysis of the operators $W_{ijkn}^2$.

\subsection{$L^p$ to $L^p_s$ bounds for the operators $V_{ijkn}^1$} 

We will be making use of the Van der Corput lemma for oscillatory integrals (see p.334 of [S1] for a proof.)

\begin{lemma}
Suppose $h(x)$ is a real-valued $C^k$ function on the interval $[a,b]$ such that $|h^{(k)}(x)| > A$ on $[a,b]$ for
some $A > 0$. Let $\phi(x)$ be $C^1$ on $[a,b]$. 

\noindent If $k \geq 2$ there is a constant $c_k$ depending only on $k$ such that
\[\bigg|\int_a^b e^{ih(x)}\phi(x)\,dx\bigg| \leq c_kA^{-{1 \over k}}\bigg(|\phi(b)| + \int_a^b |\phi'(x)|\,dx\bigg) \]
If $k =1$, the same is true if we also assume that $h(x)$ is $C^2$ and $h'(x)$ is monotone on $[a,b]$. 
\end{lemma}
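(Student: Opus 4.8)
The plan is to treat this as the classical Van der Corput estimate: prove the $k=1$ case directly by one integration by parts, and then obtain the $k\geq 2$ cases by induction on $k$, at each stage splitting $[a,b]$ into a short interval around the point where $|h^{(k-1)}|$ is smallest, estimated trivially, together with the complementary intervals, estimated by the inductive hypothesis with one fewer derivative.

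For $k=1$ I would write $e^{ih(x)}=\frac{1}{ih'(x)}\frac{d}{dx}e^{ih(x)}$ and integrate by parts. The boundary term is at most $A^{-1}(|\phi(a)|+|\phi(b)|)$, and the interior term equals $\int_a^b e^{ih(x)}\frac{d}{dx}\left(\frac{\phi(x)}{ih'(x)}\right)dx$. Expanding the inner derivative as $\frac{\phi'}{ih'}-\frac{\phi\,h''}{i(h')^2}$, the first summand contributes at most $A^{-1}\int_a^b|\phi'|$; for the second I would use that monotonicity of $h'$ forces $h''$ to keep a constant sign, so $\int_a^b\frac{|h''|}{(h')^2}\,dx=\big|\frac{1}{h'(a)}-\frac{1}{h'(b)}\big|\leq 2A^{-1}$, while $|\phi(x)|\leq|\phi(b)|+\int_a^b|\phi'|$ pointwise. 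Collecting the pieces and using $|\phi(a)|\leq|\phi(b)|+\int_a^b|\phi'|$ gives the asserted bound with an absolute constant $c_1$.

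For the inductive step, assume the statement for $k-1$ and suppose $|h^{(k)}|>A$ on $[a,b]$. Since $h^{(k)}$ is continuous and nonvanishing it has constant sign, so $h^{(k-1)}$ is monotone; let $c\in[a,b]$ be a point minimizing $|h^{(k-1)}|$, i.e.\ a zero of $h^{(k-1)}$ if one exists and an endpoint otherwise. Set $\delta=A^{-1/k}$. On $J=(c-\delta,c+\delta)\cap[a,b]$ I bound the integral by $\int_J|\phi|\leq 2\delta\big(|\phi(b)|+\int_a^b|\phi'|\big)$. On each component of $[a,b]\setminus J$, the fundamental theorem of calculus for $h^{(k-1)}$ together with $|h^{(k)}|>A$ yields $|h^{(k-1)}|\geq A\delta$ there, so the inductive hypothesis applies with $k$ replaced by $k-1$ and $A$ by $A\delta$, contributing at most $c_{k-1}(A\delta)^{-1/(k-1)}\big(|\phi(b)|+\int_a^b|\phi'|\big)$ per component, the interior endpoint being absorbed as before. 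Since $\delta=A^{-1/k}$ makes both $2\delta$ and $(A\delta)^{-1/(k-1)}$ equal to $A^{-1/k}$, adding the at most three pieces gives the bound with $c_k=2+2c_{k-1}$. When $k-1=1$ the inductive hypothesis needs $h'$ monotone, which holds because $|h''|>A$ keeps a constant sign; and the degenerate case $A^{-1/k}\geq b-a$ is covered by the trivial estimate $\big|\int_a^b e^{ih}\phi\big|\leq(b-a)\big(|\phi(b)|+\int_a^b|\phi'|\big)$.

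I do not anticipate a genuinely hard step; the only care needed is bookkeeping — arranging the boundary terms so that only $|\phi(b)|$ appears (always possible since $|\phi(a)|\leq|\phi(b)|+\int_a^b|\phi'|$) and verifying the lower bound $|h^{(k-1)}|\geq A\delta$ off the short interval $J$, which comes down to observing that $h^{(k-1)}(c)$ and the increment $\int_c^x h^{(k)}$ share a sign once $x$ has passed the minimizing point $c$.
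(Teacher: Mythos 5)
Your proof is correct. The paper does not prove this lemma at all --- it simply cites Stein [S1, p.\ 334] --- and your argument is essentially the classical one found there: the $k=1$ case by integration by parts using the monotonicity of $h'$ to control $\int |h''|/(h')^2$, and induction on $k$ by excising an interval of length $\delta=A^{-1/k}$ about the minimizer of $|h^{(k-1)}|$. The only (harmless) deviation from the standard presentation is that you carry the amplitude $\phi$ through the induction rather than first proving the $\phi\equiv 1$ estimate and then inserting $\phi$ by one integration by parts against $F(x)=\int_a^x e^{ih(t)}\,dt$; your bookkeeping (absorbing $|\phi(b_I)|$ for an interior right endpoint $b_I$ via $|\phi(b_I)|\leq |\phi(b)|+\int_{b_I}^b|\phi'|$, and the sign argument giving $|h^{(k-1)}|\geq A\delta$ off the excised interval) is sound.
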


\noindent We examine $(4.9)$ in the $l = 1$ case, which we may write as
\[\widehat{\nu_{ijkn}^1} (\lambda) = \sigma(2^{-n} \lambda)\int e^{-iP(x,y)}\gamma_{ijk}(x,y)\alpha_1\big(|\lambda|^{{1 \over 2} - {\epsilon_0 \over 16}}(R_{yy}^*)^{1 \over 2} (y - y^*(x)\big) \,dx\,dy \tag{4.12}\]
Next, shift coordinates in $y$ in $(4.12)$, replacing $y$ by $y + y^*(x)$. The result is
\[\widehat{\nu_{ijkn}^1} (\lambda) = \sigma(2^{-n} \lambda)\int e^{-iP(x,y + y^*(x))}\gamma_{ijk}(x,y + y^*(x))\alpha_1\big(|\lambda|^{{1 \over 2} - {\epsilon_0 \over 16}}(R_{yy}^*)^{1 \over 2}\, y \big) \,dx\,dy \tag{4.13}\]
The right-hand side of this may be written as
\[ \sigma(2^{-n} \lambda)\int e^{-iP(x,y^*(x))} e^{-i(P(x,y + y^*(x)) - P(x,y^*(x))}\gamma_{ijk}(x,y + y^*(x))\alpha_1\big(|\lambda|^{{1 \over 2} - {\epsilon_0 \over 16}}(R_{yy}^*)^{1 \over 2}\, y \big) \,dx\,dy \tag{4.14}\]
The idea now is that due to the $\alpha_1\big(|\lambda|^{{1 \over 2} - {\epsilon_0 \over 16}}(R_{yy}^*)^{1 \over 2}\, y \big)$ factor in $(4.14)$, which
has the effect of localizing the phase to near $y = 0$, the phase in $(4.14)$ is effectively $P(x,y^*(x))$ and we may bound $(4.14)$ well by simply using
stationary phase in the $x$ variable. We will do this via an application of the Van der Corput lemma. (Recall that the $x$ for which
the integrand of $(4.12)$ is nonzero for any $y$ is contained in finitely many intervals on which $y^*(x)$ is defined, so that issues regarding with the 
domain of $y^*(x)$ will not interfere with applying the Van der Corput lemma here in the $x$ variable.)
 Our first step is to observe that we have
\[\partial_x (P(x,y^*(x))) = P_x(x,y^*(x)) + (y^*)'(x) P_y(x,y^*(x))\]
\[ = P_x(x,y^*(x)) \tag{4.15}\]
The latter equation follows from the fact that $y^*(x)$ is defined through the the condition that $P_y(x,y^*(x)) = 0$. Taking a second derivative, we have
\[\partial_{xx}(P(x,y^*(x)))= P_{xx}(x,y^*(x)) + (y^*)'(x) P_{xy}(x,y^*(x)) \]
Since we have the formula $(y^*)'(x) = -{P_{xy}(x,y^*(x)) \over P_{yy}(x,y^*(x))}$, the above becomes
\[\partial_{xx}(P(x,y^*(x)))= {P_{xx}(x,y^*(x))P_{yy}(x,y^*(x)) - ( P_{xy}(x,y^*(x)))^2 \over P_{yy}(x,y^*(x))} \tag{4.16}\]
Note the presence of the Hessian determinant of $P(x,y^*(x))$ in the numerator of $(4.16)$. The Hessian determinant of $P(x,y)$ is invariant under transformations
of the form $(x,y) \rightarrow (x, y + h(x))$ at any point where $\partial_y P(x,y) = 0$, as can be verified by direct calculation; this holds even when not at a critical point.
 So the numerator in $(4.16)$ is the same as the Hessian determinant
of $P(x,y)$ in the original coordinates, before the resolution of singularities procedure was done. But in the original coordinates, $P(x,y)$ was 
$\lambda_1 x + \lambda_2 y + \lambda_3 S(x,y)$, whose Hessian is just $(\lambda_3)^2 H(x,y)$, where $H(x,y)$ denotes the Hessian of $S(x,y)$. 
Recall that our resolution of singularities algorithm
was applied to this Hessian, along with several other functions. 
So if we denote the Hessian determinant in the final coordinates by $H_i(x,y)$, the numerator in $(4.16)$ is just 
$(\lambda_3)^2 H_i(x,y)$, where $|H_i(x,y)|$ is comparable to a fixed value throughout the domain of $\gamma_{ijk}(x)$, which we denote by $H_i^*$.
Thus the absolute value of the right-hand side of $(4.16)$ is comparable to ${|\lambda_3|^2 H_i^* \over |\lambda_3| R_{yy}^*} \sim
 |\lambda| H_i^* ( R_{yy}^*)^{-1}$, using that $|\lambda_3| \sim |\lambda|$ here.

We now have uniform lower bounds on the absolute value of the second $x$ derivative of the phase function in $(4.14)$. As a result, we may 
apply the Van der Corput lemma to the $x$ integral in $(4.14)$. The result is this integral is bounded by $|\lambda|^{-{1 \over 2}}
(H_i^*)^{-{1 \over 2}} ( R_{yy}^*)^{{1 \over 2}}$ times the integral of the absolute value of the $x$ derivative of the product of the right-hand factors in $(4.14)$, which we will see is uniformly bounded in $j$ and $k$. 

The $x$ derivative can land in two places and get a nonzero value, the factor
 $\gamma_{ijk}(x,y + y^*(x))$ and the factor $e^{-i(P(x,y + y^*(x)) - P(x,y^*(x))}$.  We first deal with $\gamma_{ijk}(x,y + y^*(x))$. Note that we have
\[\partial_x(\gamma_{ijk}(x,y + y^*(x))) = (\gamma_{ijk})_x(x, y + y^*(x)) - (\gamma_{ijk})_y(x, y + y^*(x)){P_{xy}(x,y^*(x)) \over P_{yy}(x,y^*(x))}
\]
Since $P_{xy} = \lambda_3 R_{xy}$ and $P_{yy} = \lambda_3 R_{yy}$, the above can be rewritten as
\[\partial_x(\gamma_{ijk}(x,y + y^*(x))) = (\gamma_{ijk})_x(x, y + y^*(x)) - (\gamma_{ijk})_y(x, y + y^*(x)){R_{xy}(x,y^*(x)) \over R_{yy}(x,y^*(x))}
\tag{4.17}\]
By part 2 of Theorem 3.2, the first term in $(4.17)$ is bounded in absolute value by $C2^j$. Since the domain of integration has length at most
$C'2^{-j}$, its integral in $x$ is uniformly bounded as needed. As for the second term in $(4.17)$, using part 2 of Theorem 3.2 again we see it is 
bounded in absolute value by $C2^k {R_{xy}(x,y^*(x)) \over R_{yy}(x,y^*(x))}$. Since we resolved the singularities of $\partial_y(S(x,y) - S(0,y))$ 
(after a linear coordinate change) in
the resolution of singularities process, $R_{xy}(x,y^*(x))$ is monomialized in the final coordinates, so in particular there is some fixed 
number $R_{xy}^*$ such that $R_{xy} \sim R_{xy}^*$ whenever the integrand in $(4.17)$ is nonzero. Hence the ratio ${R_{xy}(x,y^*(x)) \over R_{yy}(x,y^*(x))}$ is comparable in magnitude to ${R_{xy}^* \over R_{yy}^*}$, and the second term is bounded by $C2^k{R_{xy}^* \over R_{yy}^*}$.

The length of the $x$ interval of integration in $(4.14)$ is at most the $x$-diameter of the set in which the integrand of $(4.12)$ is nonzero.
The $\alpha_1\big(|\lambda|^{{1 \over 2} - {\epsilon_0 \over 16}}(R_{yy}^*)^{1 \over 2} (y - y^*(x)\big)$ factor in $(4.12)$ implies bounds for
this diameter. The curve $y = y^*(x)$ has slope $\sim {R_{xy}^* \over R_{yy}^*}$, so it traverses an interval of $y$-length $C2^{-k}$ over an
$x$-distance bounded
by $C2^{-k}{R_{yy}^* \over R_{xy}^*}$. We saw after $(4.9)$ that the $y$-length of the support of $\alpha_1\big(|\lambda|^{{1 \over 2} - {\epsilon_0 \over 16}}(R_{yy}^*)^{1 \over 2} (y - y^*(x)\big)$ is less than $C|\lambda|^{-{\epsilon_0 \over 16}}2^{-k}$, so the factor $\alpha_1\big(|\lambda|^{{1 \over 2} - {\epsilon_0 \over 16}}(R_{yy}^*)^{1 \over 2} (y - y^*(x)\big)$ will also be supported on a set of $x$ diameter bounded by $C2^{-k}{R_{yy}^* \over R_{xy}^*}$.
 Thus the integral of the second term of $(4.17)$ is bounded by
$C 2^k{R_{xy}^* \over R_{yy}^*} \times 2^{-k} {R_{yy}^* \over R_{xy}^*}$ and is therefore uniformly bounded. 
We conclude that when the $x$ derivative lands on the $\gamma_{ijk}(x,y + y^*(x))$, the resulting term integrates to something uniformly bounded in
$j$ and $k$. 
 
We move now to the case where the derivative lands on  $e^{-i(P(x,y + y^*(x)) - P(x,y^*(x))}$. The resulting term has magnitude bounded by
\[C\big|[P_x(x, y + y^*(x)) - P_x(x, y^*(x))] - [P_y(x,y + y^*(x)) - P_y(x,y^*(x))] {R_{xy}(x,y^*(x)) \over R_{yy}(x,y^*(x))}\big| \tag{4.18}\]
Note that 
\[P_x(x, y + y^*(x)) - P_x(x, y^*(x)) = P_{xy}(x,y^*(x))y + O(y^2\sup P_{xyy})\]
\[ = \lambda_3  (R_{xy}(x,y^*(x))y + O(|\lambda| y^2\sup|R_{xyy}|) \tag{4.19}\]
Here the supremum is over the support of the integrand in $(4.14)$. Similarly, one has
\[P_y(x,y + y^*(x)) - P_y(x,y^*(x)) =   \lambda_3  (R_{yy}(x,y^*(x))y + O(|\lambda| y^2\sup |R_{yyy}|) \tag{4.20}\]
Inserting $(4.19)$ and $(4.20)$ into $(4.18)$, we see that $(4.18)$ is bounded by
\[O(|\lambda| y^2\sup|R_{xyy}|) +  O(|\lambda| y^2{R_{xy}^* \over R_{yy}^*}\sup |R_{yyy}|) \tag{4.21}\]
By Corollary 3.1.1, we have that $\sup |R_{yyy}| \leq C2^k R_{yy}^*$ and $\sup|R_{xyy}| \leq C 2^k R_{xy}^*$. (Technically, since we have resolved
the singularities of $\partial_y S(x,y) - \partial_y S(0,y)$ and not $\partial_{xy}S(x,y)$, Corollary 3.1.1 applies to $\partial_y R(x,y) - \partial_y R(x,0)$,
but it is not hard to show it applies to $\partial_{xy}R(x,y)$ as well since it is monomialzed in the final coordinates.) 
 Thus $(4.21)$ is bounded by
\[C|\lambda|y^2 2^k R_{xy}^* \tag{4.22}\]
As we saw above, the $x$ interval of integration in $(4.14)$ has length at most $C2^{-k}{R_{yy}^* \over R_{xy}^*}$. Hence by $(4.22)$, the
$x$ integral of the term in question is at most $C'|\lambda|y^2 2^k R_{xy}^*  \times 2^{-k}{R_{yy}^* \over R_{xy}^*} = C'|\lambda|y^2R_{yy}^*$.
Due to the presence of the $\alpha_1\big(|\lambda|^{{1 \over 2} - {\epsilon_0 \over 16}}(R_{yy}^*)^{1 \over 2}\, y \big)$ factor in $(4.14)$, we have
\[|y| \leq C|\lambda|^{-{1 \over 2} + {\epsilon_0 \over 16}}(R_{yy}^*)^{-{1 \over 2}}\tag{4.23}\]
Thus $C'|\lambda|y^2R_{yy}^* \leq C'' |\lambda|^{\epsilon_0 \over 8}$, another bound that is uniform in $j$ and $k$.

We conclude that we can indeed apply the Van der Corput lemma to the $x$ integral in $(4.14)$, obtaining a bound of $C |\lambda|^{\epsilon_0 \over 8}$
times $|\lambda|^{-{1 \over 2}}(H_i^*)^{-{1 \over 2}} ( R_{yy}^*)^{{1 \over 2}}$. We then can do the $y$ integration, using $(4.23)$. We get that
expression $(4.14)$ for $\widehat{\nu_{ijkn}^1} (\lambda)$ is at most $C|\lambda|^{{3 \over 16}\epsilon_0} |\lambda|^{-1}(H_i^*)^{-{1 \over 2}}$.
Since $\epsilon_0 > 0$ was arbitrary, $|\lambda| \sim 2^n$, and and $V_{ijkn}^1 f = f \ast \nu_{ijkn}^1$, this can be restated as the statement that for every $\epsilon > 0$ there is 
a $\delta(\epsilon) > 0$ such that $||V_{ijkn}^1||_{L^2 \rightarrow L^2_{1 - \epsilon}} \leq C_{\epsilon}2^{-\delta(\epsilon)n} (H_i^*)^{-{1 \over 2}}$.
Since $H_i(x,y)$ is comparable to $H_i^*$ on $[2^{-j}, 2^{-j+1}] \times [2^{-k}, 2^{-k+1}]$ and $\gamma_{ijk}$ is supported in a bounded
dilation of this box, we may restate this as 
\[||V_{ijkn}^1 f||_{L^2 \rightarrow L^2_{1 - \epsilon}}  \leq C_{\epsilon}2^{-\delta(\epsilon)n} \int_{[2^{-j}, 2^{-j+1}] \times [2^{-k}, 2^{-k+1}]}
|H_i(x,y) x^2 y^2 |^{-{1 \over 2}}\tag{4.24}\]
On the other hand, due to the location of the support of $\gamma_{ijk}$, the measure $\nu_{ijkn}^1$ has $L^1$ norm bounded by $C2^{-j - k} 
= C\int_{[2^{-j}, 2^{-j+1}] \times [2^{-k}, 2^{-k+1}]}|H_i(x,y) x^2 y^2 |^0$. Thus by Young's inequality we have
\[||V_{ijkn}^1 f||_{L^{(\epsilon^{-1})} \rightarrow L^{({\epsilon}^{-1})}} \leq C \int_{[2^{-j}, 2^{-j+1}] \times [2^{-k}, 2^{-k+1}]}
|H_i(x,y) x^2 y^2 |^0  \tag{4.25}\]
We now interpolate $(4.24)$ and $(4.25)$, using weighting $L^2$ by $\eta_1 = \min(\eta, {2 \eta' \over 1 + 2\eta'})$ as in the statement of Theorem 1.1,
and weighting $L^{{\epsilon}^{-1}}$ by $1 - \eta_1$. In view of the fact that $|H_i(x,y) x^2 y^2 |$ is comparable to a fixed value on the domain of
integration in $(4.24)$ or $(4.25)$, the result is
\[||V_{ijkn}^1 f||_{L^{{2 \over \eta_1} + \delta'(\epsilon)} \rightarrow L^{{2 \over \eta_1} + \delta'(\epsilon)}_{\eta_1 - \eta_1\epsilon}}  \leq C_{\epsilon}'2^{-\delta'(\epsilon)n} \int_{[2^{-j}, 2^{-j+1}] \times [2^{-k}, 2^{-k+1}]}
|H_i(x,y) x^2 y^2 |^{-{\eta_1 \over 2}} \tag{4.26}\]
Here $\delta'(\epsilon) \rightarrow 0$ as $\epsilon \rightarrow 0$. Next, since $\eta_1 \leq {2 \eta' \over 1 + 2\eta'}$, we have
\[\int_{[2^{-j}, 2^{-j+1}] \times [2^{-k}, 2^{-k+1}]} |H_i(x,y) x^2 y^2 |^{-{\eta_1 \over 2}}\leq C \int_{[2^{-j}, 2^{-j+1}] 
\times [2^{-k}, 2^{-k+1}]} |H_i(x,y) x^2 y^2 |^{-{ \eta' \over 1 + 2\eta'}} \tag{4.27}\]
We apply Holder's inequality in the right-hand integral $(4.27)$, using exponent $1 + 2\eta'$ on $|H_i(x,y)|^{-{ \eta' \over 1 + 2\eta'}}$ and exponent 
${1 + 2\eta' \over 2\eta'}$ on $|x^2y^2|^{-{ \eta' \over 1 + 2\eta'}}$. The result is
\[\int_{[2^{-j}, 2^{-j+1}] \times [2^{-k}, 2^{-k+1}]} |H_i(x,y) x^2 y^2 |^{-{ \eta' \over 1 + 2\eta'}}\]
\[ \leq \bigg(\int_{[2^{-j}, 2^{-j+1}] 
\times [2^{-k}, 2^{-k+1}]}|H_i(x,y)|^{-\eta'}\bigg)^{1 \over 1 + 2\eta'}\bigg(\int_{[2^{-j}, 2^{-j+1}] \times [2^{-k}, 2^{-k+1}]}  (xy)^{-1}\bigg)^{2 \eta' 
\over 1 + 2\eta'} \tag{4.28}\]
The right-hand factor in $(4.28)$ is immediately uniformly bounded, and the definition of $\eta'$ is exactly that the left hand factor of $(4.28)$
is uniformly bounded in $i,j$, and $k$. Thus $(4.26)-(4.28)$ imply that 
\[||V_{ijkn}^1 ||_{L^{{2 \over \eta_1} + \delta'(\epsilon)} \rightarrow L^{{2 \over \eta_1} + \delta'(\epsilon)}_{\eta_1 - \eta_1\epsilon}}  \leq C_{\epsilon}''2^{-\delta'(\epsilon)n}  \tag{4.29}\]
Since $\epsilon$ can be made arbitrarily small and $\delta(\epsilon), \delta'(\epsilon) \rightarrow 0$ as $\epsilon \rightarrow 0$, $(4.29)$ give the
needed $L^p$ to $L^p_s$ estimates for $V_{ijkn}^1$ and we are done.

\subsection{$L^p$ to $L^p_s$ bounds for the operators $W_{ijkn}^1$}

We look at $(4.8)$ in the $l = 1$ case,  which for the reader's convenience is given by
\[\widehat{\mu_{ijkn}^1} (\lambda) = \sigma(2^{-n} \lambda)\int e^{-i\lambda_1 x  -  i\lambda_2y  - i\lambda_2 h_i(x) - i\lambda_3R(x,y)}\gamma_{ijk}(x,y)\alpha_1\big(|\lambda|^{- {\epsilon_0 \over 4}}2^{-k} P_y(x,y)\big) \,dx\,dy \tag{4.30}\]
Observe that since we are in case 2, we have $2^{-k} \leq |\lambda|^{-{1 \over 2} + {\epsilon_0 \over 8}}(R_{yy}^*)^{-{1 \over 2}}$ and therefore in the domain of integration of $(4.30)$ we have
\[1 \leq C{1 \over |\lambda|^{{1 \over 2} - {\epsilon_0 \over 8}}(R_{yy}^*)^{{1 \over 2}}y}\tag{4.31}\]
Squaring this, inserting into $(4.30)$, then taking absolute values of the integrand and integrating, gives
\[\widehat{\mu_{ijkn}^1} (\lambda) \leq C'' |\lambda|^{\epsilon_0 \over 4} \int_{[2^{-j}, 2^{-j+1}] \times [2^{-k}, 2^{-k+1}]}
{1 \over |\lambda R_{yy}^*| y^2}\,dx\,dy\tag{4.32}\]
Next, observe that due to the $\alpha_1\big(|\lambda|^{- {\epsilon_0 \over 4}}2^{-k} P_y(x,y)\big)$ factor in $(4.30)$ and the fact that 
$\partial_x P_y(x,y) = \lambda_3 R_{xy}(x,y)$, for a given $y$ the $x$-diameter of
the set where the integrand in $(4.30)$ is nonzero is at most $C|\lambda|^{-1 + {\epsilon_0 \over 4}}2^k (R_{xy}^*)^{-1}$. 
Thus the overall integral is bounded by a constant times $2^{-k}$ times this, or $C'|\lambda|^{-1 + {\epsilon_0 \over 4}} (R_{xy}^*)^{-1}$. So in
 analogy with $(4.32)$ we have
\[\widehat{\mu_{ijkn}^1} (\lambda) \leq C'' |\lambda|^{\epsilon_0 \over 4} \int_{[2^{-j}, 2^{-j+1}] \times [2^{-k}, 2^{-k+1}]}
{1 \over |\lambda R_{xy}^*| xy}\,dx\,dy\tag{4.33}\]
Suppose in the final coordinates $R(x,y) \sim x^{\alpha} y^{\beta}$ where $\beta \geq 1$. Then estimates $(4.32)$ and $(4.33)$ are all that we need. 
We use $(4.32)$ if $\beta > 1$ and $(4.33)$ if $\beta = 1$ and we obtain
\[\widehat{\mu_{ijkn}^1} (\lambda) \leq C''' |\lambda|^{\epsilon_0 \over 4} \int_{[2^{-j}, 2^{-j+1}] \times [2^{-k}, 2^{-k+1}]}
{1 \over |\lambda| x^{\alpha}y^{\beta}}\,dx\,dy\tag{4.34}\]
Since $\epsilon_0 > 0$ was arbitrary, $|\lambda| \sim 2^n$, and $U_{ijkn}^1 = f \ast \mu_{ijkn}^1$, this can be restated as the statement that for every $\epsilon > 0$ there
is a $\delta(\epsilon) > 0$ such that 
\[||U_{ijkn}^1||_{L^2 \rightarrow L^2_{1 - \epsilon}} \leq C2^{-\delta(\epsilon) n}  \int_{[2^{-j}, 2^{-j+1}] \times [2^{-k}, 2^{-k+1}]}
{1 \over x^{\alpha}y^{\beta}}\,dx\,dy\tag{4.35}\]
The desired $L^p$ to $L^p_s$ estimates for the $\beta \geq 1$ situation are now proved using an interpolation argument very similar to that of the end
of section 4.4. Namely, due to the location of the support of $\gamma_{ijk}$, the measure $\mu_{ijkn}^1$ has $L^1$ norm bounded by $C2^{-j - k} 
= C\int_{[2^{-j}, 2^{-j+1}] \times [2^{-k}, 2^{-k+1}]}({1 \over x^{\alpha}y^{\beta}})^0$. Thus by Young's inequality we have
\[||U_{ijkn}^1 f||_{L^{(\epsilon^{-1})} \rightarrow L^{({\epsilon}^{-1})}} \leq C \int_{[2^{-j}, 2^{-j+1}] \times [2^{-k}, 2^{-k+1}]}\bigg({1 \over x^{\alpha}y^{\beta}}\bigg)^0 \,dx\,dy \tag{4.36}\] 
Using the same weighting as before, in analogy with $(4.26)$, for $\eta_1 = \min(\eta, {2 \eta' \over 1 + 2\eta'})$ we have
\[||U_{ijkn}^1 f||_{L^{{2 \over \eta_1} + \delta'(\epsilon)} \rightarrow L^{{2 \over \eta_1} + \delta'(\epsilon)}_{\eta_1 - \eta_1\epsilon}}  \leq C_{\epsilon}'2^{-\delta'(\epsilon)n} \int_{[2^{-j}, 2^{-j+1}] \times [2^{-k}, 2^{-k+1}]}
\bigg({1 \over x^{\alpha}y^{\beta}}\bigg)^{{\eta_1}}\,dx\,dy \tag{4.37}\]
Here $\delta'(\epsilon) \rightarrow 0$ as $\epsilon \rightarrow 0$. Next, since $\eta_1 \leq \eta$, we have
\[  \int_{[2^{-j}, 2^{-j+1}] \times [2^{-k}, 2^{-k+1}]} \bigg({1 \over x^{\alpha}y^{\beta}}\bigg)^{\eta_1} \leq C'' \int_{[2^{-j}, 2^{-j+1}] \times [2^{-k}, 2^{-k+1}]} \bigg({1 \over x^{\alpha}y^{\beta}}\bigg)^{\eta} \,dx\,dy\tag{4.38}\]
The index $\eta$ is defined to the supremum of the $e$ for which $|S(x,y)|^{-e}$ is integrable on a neighborhood of the origin, and this index is
invariant under the coordinate changes of this paper. Hence since $|R_i(x,y)| \sim  x^{\alpha}y^{\beta}$, the integrals on the right of $(4.38)$ are
 uniformly bounded in $j$ and $k$. Thus $(4.37)$ and $(4.38)$ imply
\[||U_{ijkn}^1||_{L^{{2 \over \eta_1} + \delta'(\epsilon)} \rightarrow L^{{2 \over \eta_1} + \delta'(\epsilon)}_{\eta_1 - \eta_1\epsilon}}  \leq C_{\epsilon}''2^{-\delta'(\epsilon)n}  \tag{4.39}\]
Since $\epsilon$ can be made arbitrarily small and $\delta(\epsilon), \delta'(\epsilon) \rightarrow 0$ as $\epsilon \rightarrow 0$, $(4.39)$ gives the
needed $L^p$ to $L^p_s$ estimates for $U_{ijkn}^1$ and we are done for the case where $\beta \geq 1$.

What remains is to consider the $\beta = 0$ case, so that $|R(x,y)| \sim x^{\alpha}$. We will make use of $(4.32)$ when we are  in the second case of Theorem 3.4, since it works best in conjunction with Theorem 3.5, and we will make use of $(4.33)$ when we are in 
the first case of Theorem 3.4 since that equation works best in conjunction with lower bounds of the first case of Theorem 3.4.  Suppose we are in the first case. Then in our current notation, in this case there exists a constant $C$ and $s_i$, $g_i > 0$ such that
 $|{\partial^2 R \over\partial x\partial y} (x,y)| > C^{-1}x^{s_i - g_i - 1}$ on the support of $\gamma_{ijk}$, and
such that $|R(x,y)| < C x^{s_i}$ on a product of intervals $I_1 \times I_2  \subset [2^{-j}, 2^{-j+1}]  \times [C^{-1} 2^{-jg_i}, C2^{-jg_i}]$,
where $|I_1| > C^{-1} 2^{-j}$, $|I_2| > C^{-1}2^{-jg_i}$, and $k \geq jg_i$. By $(4.33)$ we have
\[\widehat{\mu_{ijkn}^1} (\lambda) \leq C |\lambda|^{\epsilon_0 \over 4} \int_{[2^{-j}, 2^{-j+1}] \times [2^{-k}, 2^{-k+1}]}
{1 \over |\lambda| x^{s_i - g_i }y}\,dx\,dy\tag{4.40}\]
By the steps of the above interpolation argument leading up to $(4.37)$, using the fact that $\eta_1 \leq \eta$, we have
\[||U_{ijkn}^1 f||_{L^{{2 \over \eta_1} + \delta'(\epsilon)} \rightarrow L^{{2 \over \eta_1} + \delta'(\epsilon)}_{\eta_1 - \eta_1\epsilon}}  \leq C_{\epsilon}'2^{-\delta'(\epsilon)n} \int_{[2^{-j}, 2^{-j+1}] \times [2^{-k}, 2^{-k+1}]}
\bigg({1 \over  x^{s_i - g_i  }y}\bigg)^{{\eta}} \tag{4.41}\]
Since $\eta \leq 1$, the integral in $(4.41)$ is nonincreasing in $k$. As a result, since $k \geq jg_i$, $(4.41)$ implies
\[||U_{ijkn}^1 f||_{L^{{2 \over \eta_1} + \delta'(\epsilon)} \rightarrow L^{{2 \over \eta_1} + \delta'(\epsilon)}_{\eta_1 - \eta_1\epsilon}}  \leq C_{\epsilon}'2^{-\delta'(\epsilon)n} \int_{[2^{-j}, 2^{-j+1}] \times [2^{-jg_i}, 2^{-jg_i+1}]}
\bigg({1 \over  x^{s_i - g_i  }y}\bigg)^{{\eta}} \tag{4.42}\]
But since $y \sim x^{g_i}$ in the domain of $(4.42)$, the integrand in $(4.42)$ is comparable to $(x^{s_i})^{-\eta}$. But since we are in case 
1 of Theorem 3.4, $|R(x,y)| < C x^{s_i}$ on a product of intervals $I_1 \times I_2$ whose measure is comparable to that of the domain of
integration of $(4.42)$. So the right hand side of $(4.42)$ is bounded by
$C_{\epsilon}''2^{-\delta'(\epsilon)n} \int_{I_1 \times I_2}|R(x,y)|^{-{\eta}}$.
Since $\eta$ is defined as the supremum of the $e$ for which $\int|R(x,y)|^{-e}$ is finite on a neighborhood of the origin, this
means that the integral in $(4.42)$ is uniformly bounded. Thus once again we have the desired estimate $(4.39)$.

We now move to the second case of Theorem 3.4. We raise $(4.31)$ to the ${2 \over 3}$ power and insert it in $(4.30)$, obtaining that
$|\widehat{\mu_{ijkn}^1} (\lambda)|$ is bounded by
\[ C |\lambda|^{\epsilon_0 \over 12} \int\bigg|\int {1 \over (|\lambda|R_{yy}^* y^2)^{2 \over 3}}e^{-i\lambda_1 x  -  i\lambda_2y  - i\lambda_2 h_i(x) - i\lambda_3R(x,y)}\gamma_{ijk}(x,y)\alpha_1\big(|\lambda|^{- {\epsilon_0 \over 4}}2^{-k} P_y(x,y)\big)\,dx\bigg|\,dy \tag{4.43}\]
We now use the Van der Corput lemma for either second or third derivatives in the $x$ integration in $(4.43)$. Using $(3.1)$, for 
either $m =2$ or $m = 3$ the phase
$P(x,y)$ will satisfy $|\partial_x^m P(x,y)| \geq C|\lambda|x^{\alpha - m}$. (Always $\alpha > 2$ in the degenerate case at hand). We don't have
to worry about the possibility that $h_i(x)$ has a zero of order $\alpha$ at $x = 0$; even if it does, since we can assume $|\lambda_3| > D|\lambda_2|$ 
for any preselected constant $D$ in our initial decomposition $\mu_n = \mu_n^1 + \mu_n^2$, we will still have  
$|\partial_x^m P(x,y)| \geq C|\lambda| x^{\alpha - m}$ for $m = 2$ and $3$ in this situation.

We insert the $m = 3$ case of the Van der Corput lemma into $(4.43)$ since that gives the weaker estimate. The result can be written as 
\[|\widehat{\mu_{ijkn}^1} (\lambda)| \leq  C |\lambda|^{\epsilon_0 \over 12} \int_{[2^{-j}, 2^{-j+1}] \times [2^{-k}, 2^{-k+1}]}
 {1 \over (|\lambda|R_{yy}^* y^2)^{2 \over 3}} \min(1, (|\lambda|x^{\alpha})^{-{1 \over 3}})\,dx\,dy \]
We remove the left part of the minimum and just write this as 
\[|\widehat{\mu_{ijkn}^1} (\lambda)| \leq  C |\lambda|^{\epsilon_0 \over 12} \int_{[2^{-j}, 2^{-j+1}] \times [2^{-k}, 2^{-k+1}]}
 {1 \over |\lambda|(R_{yy}^* y^2)^{2 \over 3} (x^{\alpha})^{{1 \over 3}}}\,dx\,dy \tag{4.44}\]
The interpolation argument we have been using then implies the following analogue of $(4.42)$.
\[||U_{ijkn}^1 f||_{L^{{2 \over \eta_1} + \delta'(\epsilon)} \rightarrow L^{{2 \over \eta_1} + \delta'(\epsilon)}_{\eta_1 - \eta_1\epsilon}}  \leq C_{\epsilon}'2^{-\delta'(\epsilon)n} \int_{[2^{-j}, 2^{-j+1}] \times [2^{-k}, 2^{-k+1}]}
\bigg({1 \over |\lambda|(R_{yy}^* y^2)^{2 \over 3} (x^{\alpha})^{{1 \over 3}}}\bigg)^{{\eta}} \tag{4.45}\]
By Theorem 3.5, the integral in $(4.45)$ is uniformly bounded in $j$ and $k$. Thus we once again have the desired estimate $(4.39)$.

We have now exhausted all cases, and we see that $(4.39)$ is always satisfied. Hence we are done with the analysis of the operators $U_{ijkn}^1$, and
therefore the proof of Theorem 1.1.
 
\section{References.}

\noindent [AGuV] V. Arnold, S. Gusein-Zade, A. Varchenko, {\it Singularities of differentiable maps},
Volume II, Birkhauser, Basel, 1988. \parskip = 4pt\baselineskip = 3pt

\noindent [C1] M. Christ, {\it Failure of an endpoint estimate for integrals along curves} in Fourier analysis and partial differential equations (Miraflores de la Sierra, 1992), 163-168, Stud. Adv. Math., CRC, Boca Raton, FL, 1995. 

\noindent [DZ] S. Dendrinos, E. Zimmermann, {\it On $L^p$-improving for averages associated to mixed homogeneous polynomial hypersurfaces in $\R^3$},
 J. Anal. Math. {\bf 138} (2019), no. 2, 563-595.

\noindent [Du] Duistermaat, J. J., {\it Oscillatory integrals, Lagrange immersions and unfolding of singularities},
Comm. Pure Appl. Math., {\bf 27} (1974), 207-281.

\noindent [FGoU1] E. Ferreyra, T. Godoy, M. Urciuolo, {\it Boundedness properties of some convolution operators with singular measures}, Math. Z. 
{\bf 225} (1997), no. 4, 611-624.

\noindent [FGoU2] E. Ferreyra, T. Godoy, M. Urciuolo, {\it Sharp $L^p$-$L^q$ estimates for singular fractional integral operators.}
Math. Scand. {\bf 84} (1999), no. 2, 213-230. 

\noindent [Gra] L. Grafakos, {\it Endpoint bounds for an analytic family of Hilbert transforms}, Duke Math. J. {\bf 62} (1991), no. 1, 23-59. 

\noindent [G1] M. Greenblatt, {\it $L^p$ Sobolev regularity of averaging operators over hypersurfaces and the Newton polyhedron}, J. Funct. Anal. 
{\bf 276} (2019), no. 5, 1510-1527.

\noindent [G2] M. Greenblatt, {\it Smooth and singular maximal averages over 2D hypersurfaces and associated Radon transforms}, Adv. Math. 
{\bf 377} (2021), Paper No. 107465, 45 pp.

\noindent [G3] M. Greenblatt, {\it Smoothing theorems for Radon transforms over hypersurfaces and related operators}, Forum Math. {\bf 32} (2020), 
no. 6, 1637-1647.

\noindent [G4] M. Greenblatt, {\it Convolution kernels of 2D Fourier multipliers based on real analytic functions}, J. Geom. Anal. {\bf 28} (2018), no. 2, 
787-816.

\noindent [G5] M. Greenblatt, {\it Newton polygons and local integrability of negative powers of smooth functions in the plane}, Trans. Amer. Math. Soc. 
{\bf 358} (2006), no. 2, 657-670. 

\noindent [G6] M. Greenblatt, {\it Hyperplane integrability conditions and smoothing for Radon transforms}, J. Geom. Anal. {\bf 31} (2021), no. 4, 3683-3697. 

\noindent [Gr] P. T. Gressman, {\it Uniform Sublevel Radon-like Inequalities}, J. Geom. Anal. {\bf 23} (2013), no. 2,
611-652.

\noindent [HeHoY1]  Y. Heo, S. Hong, C.W. Yang, {\it $L^p$-Sobolev regularity for integral operators over certain hypersurfaces},
 Bull. Korean Math. Soc. {\bf 51} (2014), no. 4, 965-978.

\noindent [HeHoY2] Y. Heo, S. Hong, C.W. Yang, {\it Sobolev estimates for averaging operators over a convex hypersurface in $R^3$},
 J. Math. Anal. Appl. {\bf 412} (2014), no. 1, 244-268. 

\noindent [IkKM] I. Ikromov, M. Kempe, and D. M\"uller, {\it Estimates for maximal functions associated
to hypersurfaces in $\R^3$ and related problems of harmonic analysis}, Acta Math. {\bf 204} (2010), no. 2,
151--271.

\noindent [ISa] A. Iosevich and E. Sawyer, {\it Sharp $L^p$ to $L^q$ estimates for a class of averaging operators}, Ann. Inst. Fourier, Grenoble
{\bf 46} (1996), no. 5, 1359-1384.

\noindent [ISaSe] A. Iosevich, E. Sawyer, A. Seeger, {\it On averaging operators associated with convex hypersurfaces of finite type},
 J. Anal. Math. {\bf 79} (1999), 159-187.
 
\noindent [Ka1] V. N. Karpushkin, {\it A theorem concerning uniform estimates of oscillatory integrals when
the phase is a function of two variables}, J. Soviet Math. {\bf 35} (1986), 2809-2826.

\noindent [Ka2] V. N. Karpushkin, {\it Uniform estimates of oscillatory integrals with parabolic or 
hyperbolic phases}, J. Soviet Math. {\bf 33} (1986), 1159-1188.
 
 \noindent [L], W. Littman, {\it $L^p$-$L^q$ estimates for singular integral operators arising from hyperbolic equa-
tions}, Partial differential equations. Proc. Sympos. Pure Math. {\bf 23} (1973), 479-481.

\noindent [O] D. M. Oberlin, {\it Convolution with measures on hypersurfaces}, Math. Proc. Camb. Phil. Soc.
{\bf 129} (2000), no. 3, 517-526.

\noindent [R1] B. Randol, {\it On the Fourier transform of the indicator function of a planar set}, Trans. Amer. Math. Soc. {\bf 139} 1969 271-278.

\noindent [R2] B. Randol, {\it On the asymptotic behavior of the Fourier transform of the indicator function of a convex set}, Trans. Amer. Math. Soc. 
{\bf 139} 1969 279-285. 
 
\noindent [Sc] H. Schulz, {\it Convex hypersurfaces of finite type and the asymptotics of their Fourier transforms}, Indiana Univ. Math. J. {\bf 40} 
(1991), no. 4, 1267-1275.

\noindent [Sch] J. Schwend, {\it Near Optimal $L^p \rightarrow L^q$ Estimates for Euclidean Averages Over Prototypical Hypersurfaces in $\R^3$}, 
preprint, arXiv:2012.15789. 

\noindent [Se] A. Seeger, {\it Radon transforms and finite type conditions}, J. Amer. Math. Soc. {\bf 11} (1998), no. 4, 869-897.

\noindent [S1] E. M. Stein, {\it Harmonic analysis; real-variable methods, orthogonality, and oscillatory \hfill\break
integrals}, Princeton Mathematics Series {\bf 43}, Princeton University Press, Princeton, NJ, 1993.

\noindent [S2] E. M. Stein, {\it $L^p$ boundedness of certain convolution operators}, Bull. Amer. Math. Soc. {\bf 77} (1971), 404-405.

\noindent [St] B. Street, {\it Sobolev spaces associated to singular and fractional Radon transforms}, Rev. Mat. Iberoam. {\bf 33} (2017), no. 2, 633-748.

\noindent [Str] R. Strichartz, {\it Convolutions with kernels having singularities on the sphere}, Trans. Amer. Math. Soc. {\bf 148} (1970), 461-471.
 
\noindent [V] A. N. Varchenko, {\it Newton polyhedra and estimates of oscillatory integrals}, Functional 
Anal. Appl. {\bf 18} (1976), no. 3, 175-196.

\vskip 0.3 in 

\noindent Department of Mathematics, Statistics, and Computer Science \hfill \break
\noindent University of Illinois at Chicago \hfill \break
\noindent 322 Science and Engineering Offices \hfill \break
\noindent 851 S. Morgan Street \hfill \break
\noindent Chicago, IL 60607-7045 \hfill \break
\noindent greenbla@uic.edu

\end{document}